\documentclass[11pt,openany,leqno]{article}
\usepackage{amsmath,amsthm,amsfonts,amssymb,amscd,url}
\usepackage{graphics}
\usepackage[latin1]{inputenc}

\headsep=-1truecm \oddsidemargin=8pt \evensidemargin=8pt
\textwidth=16.5truecm \textheight=23truecm

\setlength{\parindent}{12pt} 
\usepackage{enumerate}
\setcounter{tocdepth}{1}
\usepackage{hyperref}
\usepackage{epsfig} 
\input{xy} \xyoption{all}
\usepackage{makeidx}
\makeindex
\begin{document}
\def\Diff{\text{Diff}}
\def\Max{\text{max}}
\def\Log{\text{log}}
\def\loc{\text{loc}}
\def\inta{\text{int }}
\def\det{\text{det}}
\def\exp{\text{exp}}
\def\Re{\text{Re}}
\def\lip{\text{Lip}}
\def\leb{\mathrm{Leb}}
\def\dom{\mathrm{Dom}}
\def\diam{\text{diam}\:}
\def\supp{\text{supp}\:}
\def \ola{\overleftarrow}

\def\sA{{\mathfrak A}}
\def\sB{{\mathfrak B}}
\def\sC{{\mathfrak C}}
\def\sD{{\mathfrak D}}
\def\sE{{\mathfrak E}}
\def\sG{{\mathfrak G}}
\def\sL{{\mathfrak L}}
\def\sM{{\mathfrak M}}
\def\sN{{\mathfrak N}}
\def\sP{{\mathfrak P}}
\def\sR{{\mathfrak R}}
\def\sS{{\mathfrak S}}
\def\sY{{\mathfrak Y}}
\def\sT{{\mathfrak T}}

\def\sa{{\mathfrak a}}
\def\sb{{\mathfrak b}}
\def\sc{{\mathfrak c}}
\def\sd{{\mathfrak d}}
\def\sg{{\mathfrak g}}
\def\sd{{\mathfrak d}}
\def\ss{{\mathfrak s}}
\def\se{{\mathfrak e}}
\def\sp{{\mathfrak p}}
\def\sm{{\mathfrak m}}
\def\sn{{\mathfrak n}}
\def\st{{\mathfrak t}}

\def\R{{\mathbb R}}
\def\Z{{\mathbb Z}}
\def\N{{\mathbb N}}
\newcommand{\ovfork}{{\overline{\pitchfork}}}
\newcommand{\ovforki}{{\overline{\pitchfork}_{I}}}
\newcommand{\Tfork}{{\cap\!\!\!\!^\mathrm{T}}}
\newcommand{\whforki}{{\widehat{\pitchfork}_{I}}}

\newtheorem{thm}{Theorem}
\renewcommand{\thethm}{\Alph{thm}}

\theoremstyle{plain}
\newtheorem{theo}{\bf Theorem}[section]
\newtheorem{lemm}[theo]{\bf Lemma}
\newtheorem{ques}[theo]{\bf Question}
\newtheorem{sublemm}[theo]{\bf Sublemma}
\newtheorem{IH}[theo]{\bf Extra induction hypothesis}
\newtheorem{prop}[theo]{\bf Proposition}
\newtheorem{coro}[theo]{\bf Corollary}
\newtheorem{Property}[theo]{\bf Property}
\newtheorem{Claim}[theo]{\bf Claim}
\theoremstyle{remark}
\newtheorem{rema}[theo]{\bf Remark}
\newtheorem{remas}[theo]{\bf Remarks}

\newtheorem{exem}[theo]{\bf Example}
\newtheorem{Examples}[theo]{\bf Examples}
\newtheorem{defi}[theo]{\bf Definition}

\newcommand\relatif{{\rm \rlap Z\kern 3pt Z}}
\makeatletter
\renewcommand\theequation{\thesection.\arabic{equation}}
\@addtoreset{equation}{section}
\makeatother

\title{Properties of the maximal entropy measure and geometry of Hénon attractors}

\author{Pierre Berger\\ CNRS-LAGA Université Paris 13\\
berger@math.univ-paris13.fr}
\date{\today}
\maketitle

\begin{abstract} 

We consider an abundant class of non-uniformly hyperbolic $C^2$-Hénon like diffeomorphisms called strongly regular and which corresponds to Benedicks-Carleson parameters. We prove the existence of $m>0$ such that for any such diffeomorphism $f$, every invariant probability measure of $f$ has a Lyapunov exponent greater than $m$, answering a question of L. Carleson. Moreover, we show the existence and uniqueness of a measure of maximal entropy, this answers a question of M. Lyubich and Y. Pesin.  
We also prove that the maximal entropy measure is equi-distributed on the periodic points and is finitarily Bernoulli, which gives an answer to a question of J.P. Thouvenot. Finally, we show that the maximal entropy measure is exponentially mixing and satisfies the central limit Theorem. The proof is based on a new construction of Young tower for which the first return time coincides with the symbolic return time, and whose orbit is conjugated to a strongly positive recurrent Markov shift.
\end{abstract}
\tableofcontents

\section*{Introduction}
The theory of uniformly hyperbolic dynamical systems is based on several paradigmatic examples which are the doubling angle map of the circle, the Smale solenoid, the Smale horseshoe and the Anosov map, all which are uniformly hyperbolic, transitive, locally maximal compact sets, called \emph{basic piece}. 

An invariant compact set $\Lambda$ for a diffeomorphism $f$ is \emph{(uniformly) hyperbolic} if there exists a $Df$-invariant splitting $E^s\oplus E^u$ of the tangent space restricted to $\Lambda$, and there exist
 $C>0, \lambda>1$ such that 
 for every $x\in \Lambda$, for any unit vectors $u\in E^s(x),$ $v\in E^u(x)$, it holds:
\[ \|D_xf^n(u)\|\le C\lambda^n\quad \mathrm{and}\quad 
\|D_xf^n(v)\| \ge \lambda^n/C
\quad \forall n\ge 0.\]
An invariant compact set $\Lambda$  is \emph{locally maximal} if there exists a neighborhood $U$ of $\Lambda$ so that $\cap_{n\in \mathbb Z} f^n(U)=\Lambda$.  It is an \emph{attractor} if $\cap_{n\ge 0} f^n(U)=\Lambda$.

They satisfy the following properties:
\paragraph{Persistence} For any perturbations $f'$ of $f$, there exists a unique basic set $\Lambda'$ for $f'$ which is close to $\Lambda$ (they are even homeomorphic and the dynamics $f|\Lambda$ and $f'|\Lambda'$ are conjugated).
\paragraph{SRB measure} If $\Lambda$ is an attractor, then there exists an ergodic measure which is SRB (its conditional measure $\mu$ with respect to the unstable manifolds of $\Lambda$ is absolutely continuous). Moreover Lebesgue almost every point $z$ in the neighborhood of $\Lambda$ belongs to the \emph{basin} of $\mu$: its Birkhoff sum converges to $\mu$:
\[\frac 1N\sum_{j=0}^{N-1}\delta_{f^j(z)} \rightharpoonup\mu\] 
\paragraph{Coding} The set $\Lambda$   admits a (finite) Markov partition. This implies that its dynamics is semi-conjugated with a subshift of finite type. The semi-conjugacy is 1-1 on a generic set. Its lack of injectivity is itself coded by subshifts of finite type of smaller topological entropy. This enables one to study efficiently all the invariant measures of $\Lambda$, to show the existence and uniqueness of the maximal entropy measure $\nu$, and to show the equidistribution of the periodic points w.r.t. $\nu$:
\[\frac1{Card\; Fix\; f^n}\sum_{z\in Fix\,f^n}\delta_z \rightharpoonup\nu\;.\]
\bigskip

Let us recall the definitions of \emph{entropy}.  For two covers $\mathcal O$ and $\mathcal O'$ of $M$, the family of intersections of a set from $\mathcal O$ with a set from $\mathcal O'$ forms a covering $\mathcal O \vee \mathcal O'$, and similarly for multiple covers.
For any finite open cover $\mathcal O$ of $M$, let $H(\mathcal O)$ be the logarithm of the smallest number of elements of $\mathcal O$ that cover $M$.
 The following limit exists:
 \[   H(\mathcal O,f) = \lim_{n\to\infty} \frac{1}{n} H(\mathcal O\vee f^{-1}\mathcal O\vee \cdots\vee f^{-n}\mathcal O).\] 
The \emph{topological entropy $h(f)$ of $f$} is the supremum of $H(\mathcal O, f)$ over all finite covers $\mathcal O$ of $M$.

Given a measure $\mu$, the \emph{entropy of $\mu$} is defined similarly. For a finite partition $\mathcal O$, put:
 \[   H_\mu(\mathcal O,f) = \lim_{n\to\infty} \frac{1}{n} \sum_{E\in 
 \mathcal O\vee f^{-1}\mathcal O\vee \cdots\vee f^{-n}\mathcal O} -\mu (E) \log \mu (E).\]
 Then the \emph{entropy $h_\mu$  of $\mu$} is the supremum of $H_\mu(\mathcal O, f)$ over all possible finite partitions $\mathcal O$ of $M$. 
 
From the Variational Principle, the topological entropy is the supremum of entropies of invariant probability measures:
\[h(f)=\sup\{h_\mu(f):\mu \;\text{probability } f\text{-invariant}\}.\] 
Therefore the topological entropy is an \emph{ergodic invariant}, {\it i.e.} it is invariant by bi-measurable conjugacy. 

A  probability $\mu$ has \emph{maximal entropy} if $h(f)=h_\mu(f)$. 
\bigskip

The non-uniformly hyperbolic theory is still in construction. It should involve the notion of \emph{hyperbolic invariant measure}. Let us recall that given a $C^{1+\alpha}$-diffeomorphism $f$ and an invariant, ergodic probability measure $\mu$, the Oseledets multiplicative ergodic theorem produces
a $\mu$-a.e $Df$-invariant splitting of the tangent bundle into  subbundles $E^c$, $E^s$ and $E^u$ and Lyaponuv exponents $\lambda_u>0>\lambda_s$ so that for $\mu$ a.e. $z$:
\[\lim_{n\to \pm\infty} \frac1n \log\|D_zf^n|E^c\|=0, \;\lim_{n\to \pm\infty} \frac1n \log\|D_zf^n|E^u\|\ge\lambda_u\quad \mathrm {and} \quad 
\lim_{n\to \pm\infty} \frac1n \log\|D_zf^n|E^s\|\le\lambda_s\]
The measure is called \emph{hyperbolic} if $E^c_z=0$ for $\mu$ a.e. $z$.

The non-uniformly hyperbolic theory is based on a few paradigmatic examples which are the attractor of a Collet-Eckmann quadratic map, the attractor of a Hénon like map of Benedicks-Carleson type, and the locally maximal non-uniformly hyperbolic horseshoes. They are the non-uniformly hyperbolic correspondents to respectively the doubling angle map of the circle, the Smale solenoid and the Smale horseshoe. 

All these compact sets  are  transitive and locally maximal sets although they are \emph{not} uniformly hyperbolic.  They are all persistent in the following sense : 
\paragraph{Abundance} For an open set of deformations $(f_a)_a$ of the dynamics $f_0=f$, for a set of parameters $a$ of Lebesgue measure positive, there exists compact set $\Lambda_a$ for $f_a$, which is transitive, locally maximal and endowed with an abstract structure which presents similar properties to those provided by the uniformly hyperbolic theory.  The abundance of Collet-Eckmann quadratic maps, is the well known Jacobson's Theorem \cite{Ja81, BC1,Ts93, Y}. The abundance of transitive H\'enon attractor is the celebrated Benedicks-Carleson \cite{BC2, MV93, YW, Ta11, berhen}. The abundance of non-uniformly hyperbolic horseshoes has been introduced by Palis-Yoccoz \cite{PY09, Ta12}.

\paragraph{SRB measure for attractors} 
Every Collet-Eckmann map preserves a unique absolutely invariant measure (SRB) \cite{CE83}.  The existence of the SRB measure for Benedick-Carleson parameters was proved by Benedicks-Young \cite{BY}. In \cite{BV2}, Benedicks-Viana  proved that the basin of the SRB contains Lebesgue a.e. point in the neighborhood of the attractor.   The paper \cite{Yo98} provides a general setting where appropriate hyperbolicity hypotheses allow to construct hyperbolic SRB measures with nice statistical properties.

\paragraph{Coding} 
In \cite{H81}, a coding is given to prove the existence and the uniqueness of the maximal entropy measure for unimodal maps of positive entropy (such as Collet-Eckmann maps).

In \cite{YW}, a certain coding is given in order to prove the existence of a maximal entropy measure for H\'enon attractors of Benedicks-Carleson type, but the formalism does not seem to imply easily its uniqueness.
 
In \cite{PY09}, a certain Markovian coding is given on the maximal invariant set, but it is not easy to see if this implies the uniqueness of the maximal entropy measures.


In finite regularity, a measure of maximal entropy needs not exist \cite{Gu1969}.
Nevertheless, a famous theorem of Newhouse states the existence of a maximal entropy measure for every smooth diffeomorphism \cite{NH}.  

Also given any $C^{1+\alpha}$-diffeomorphism of a compact surface of positive topological entropy greater than $\chi  > 0$, Sarig constructed a countable Markov chain for an invariant set which has full measure w.r.t. any ergodic invariant measure with metric entropy $> \chi$ \cite{Sa10}. The semi-conjugacy associated to this Markov partition is finite-to-one. 

\bigskip

In this work we study the ergodic properties of the unique paradigmatic example of non-uniformly hyperbolic attractor for surface diffeomorphisms: the Benedicks-Carleson attractors for Hénon-like maps.  It is given by diffeomorphisms $f_{a\; B}$ of $\R^2$ which are of the form: 
\[f_{a\; B}:\; (x,y)\mapsto (x^2+a+y,0)+B(x,y,a),\]
where $B\in C^2(\mathbb R^2\times \mathbb R, \mathbb R^2)$ is uniformly $C^2$-close to $0$. We denote by $b$ an upper bound of the uniform $C^2$-norm of $B|[-3,3]^2$. In \cite{berhen}, the following
 analogous to Benedicks-Carleson  Theorem is shown:
\begin{theo}\label{Mainold}
For any $\eta>0$, for any $a_0$ greater but sufficiently close to $-2$, there exists $b>0$ such that for any $B|[-3,3]^2\times \R$ with $C^2$-norm less than  ${b}$, there exists a subset $\Omega_B \subset [-2,a_0]$  such that $\frac{\leb \, \Omega_B}{\leb \, [-2,a_0]}>1-\eta$ and for every $a\in \Omega_B$, the map $f_{a\; B}$ is strongly regular.
\end{theo}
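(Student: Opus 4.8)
The plan is to run the parameter-exclusion strategy of Benedicts and Carleson, in the strongly dissipative two-dimensional form, and to carry along the extra combinatorial bookkeeping that the notion of strong regularity demands. The reduction that makes this feasible is that $f_{a,B}$ is strongly contracting: $\det(Df_{a,B})=O(b)$, and when $b=0$ the map sends the whole plane onto the $x$-axis in one step and there restricts to the quadratic map $g_a\colon x\mapsto x^2+a$. At $a=-2$ this is the Chebyshev map, whose critical orbit is $0\mapsto -2\mapsto 2$ and then fixed at the repelling point $2$; hence $g_{-2}$ is Misiurewicz and uniformly expanding off the critical point. The classical one-dimensional theory then gives, for $a$ in a small right-neighborhood of $-2$, a set of parameters whose relative measure tends to $1$ as the neighborhood shrinks, on which $g_a$ satisfies the Collet--Eckmann bound $|Dg_a^n(g_a(0))|\ge e^{cn}$ and the slow-recurrence estimate $\mathrm{dist}(g_a^n(0),0)\ge e^{-\alpha n}$ with $\alpha\ll c$. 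This is the base of an induction whose constants $c,\alpha$ and expansion rate I would fix once and for all.

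Turning on $b>0$ small, one first constructs a critical set $\mathcal C$ near $0$ in a thin neighborhood of the $x$-axis --- the points at which the iterated most-contracted directions become tangent to the unstable cone field --- and checks that $\mathcal C$ and its orbit depend controllably on $(a,B)$. The core is then an induction on the generation $n$: over a nested family of parameter intervals one maintains the Basic Assumption (exponential growth of $\|Df^k\|$ along the orbit of each point of $\mathcal C$, together with the slow-recurrence bound for free returns to $\mathcal C$), and in addition the finer data --- bound periods, free-period lengths, admissible curves and the associated nested puzzle pieces --- that defines a strongly regular map. At each stage the parameters violating the new constraint are discarded. The decisive analytic ingredient is the bounded-distortion lemma comparing $\partial_a$ and $\partial_x$ along the critical curve (the derivative with respect to the parameter behaves like the derivative along the phase space), which turns every one-dimensional exclusion estimate into a two-dimensional one and forces the measure removed at generation $n$ to be at most $C\theta^n$ for some $\theta<1$; this is also where the $C^2$ regularity of $B$ is used. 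Summing over $n$, and choosing first $a_0$ close enough to $-2$ (so the one-dimensional exclusions are tiny) and then $b$ small (so the two-dimensional corrections are a perturbation of these), the surviving set $\Omega_B$ has relative measure $>1-\eta$, and by construction each surviving $a$ is strongly regular. This ordering of the choices is precisely the quantifier order in the statement.

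The main obstacle --- inherent to this circle of ideas --- is that the hyperbolic and the combinatorial estimates are not independent: the definition of $\mathcal C$, the control of bound and binding periods, the distortion constants along admissible curves, and the Basic Assumption must all be established inside a single induction in which each is used to prove the next. I would organize this along the lines of the architecture underlying \cite{berhen}, being careful to propagate the distortion constants through the binding argument and to keep the ``deep return $\Rightarrow$ long bound period'' trade-off quantitatively sharp. The second point specific to the present statement is that one needs more than slow recurrence: strong regularity requires the Markov/puzzle structure attached to the returns of $\mathcal C$ to be, in the sense exploited later in this paper, strongly positive recurrent --- roughly, that the tail of the first-return-time function to the reference piece decays fast enough. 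I would extract this by tracking, alongside the Basic Assumption, a quantitative ``fast escape after a deep return'' estimate showing that the combinatorial complexity grows subexponentially; verifying this is where the precise relation among $c$, $\alpha$ and $b$ is spent.
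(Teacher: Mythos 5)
This theorem is not proved in the present paper: it is stated verbatim as imported from \cite{berhen} (``In \cite{berhen} the following result is shown''), and the paper's entire architecture takes strong regularity as a hypothesis. So there is no internal proof to compare your proposal against; the comparison is with the cited reference.

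That said, your outline is essentially the one carried out in \cite{berhen}: a Benedicks--Carleson parameter exclusion in the strongly dissipative two-dimensional setting, with the critical set $\mathcal C$ defined by tangencies between the most-contracted directions $e_k$ and the unstable cone $\chi$, a Collet--Eckmann/slow-recurrence Basic Assumption propagated by induction on the generation, and the parameter-transversality (bounded distortion between $\partial_a$ and $\partial_x$) estimate as the bridge from one-dimensional to two-dimensional exclusion. The ordering of quantifiers ($a_0$ then $b$) is also the one used there. At this level the strategy is correct.

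One conceptual point should be corrected. You write that ``strong regularity requires the Markov/puzzle structure attached to the returns of $\mathcal C$ to be \dots strongly positive recurrent.'' That is not the definition. Strong regularity, as given in \textsection 2.7 of this paper, is the existence of a puzzle algebra $(\Sigma,\mathcal Y,\Sigma^\square,C)$ satisfying $(SR_0)$--$(SR_4)$, together with the combinatorial and geometric constraints on common sequences (in particular items (3) and (4) of the common-sequence definition, which bound how often and how deeply returns to $Y_\square$ can occur, and the tangency/aleph-regularity conditions controlling the position of $S^{t\square}$ relative to $W^s_{c(t)}$). Strong positive recurrence of the induced countable Markov shift is not a hypothesis: it is a \emph{consequence}, derived in the present paper from that structure (via Proposition \ref{propSPR} and the finite-entropy bound). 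What the parameter-exclusion induction actually has to preserve, alongside the Basic Assumption, is the puzzle-algebra data: the well-definedness of the $\star$-operation and of the graph transforms $\mathbf a$ on the relevant domains, the suitability of the chains produced by the critical orbit's coding, and conditions (3)--(4) which are the sharp, quantitative form of ``deep return $\Rightarrow$ long bound period.'' Reorienting your last paragraph toward those conditions rather than toward strong positive recurrence would align the proposal with the actual target definition.
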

The definition of strong regularity is recalled in section \ref{rappeldesdef}. We showed in \cite{berhen} that this implies, for each $a\in \Omega_B$, $f_{a,B}$ leaves invariant a unique physical, ergodic, SRB probability measure.

We notice that the Jacobean $det\; Df_{a\; B}$ is small. We assume it smaller than $b$. Hence every  ergodic probability measure has a negative Lyapunov exponent. The first theorem is an answer a question  of L. Carleson (as related by S. Newhouse during the first Palis-Balzan conference):
\begin{thm}\label{MainNew}
For every strongly regular Hénon-like map $f$,  there exists $m>0$ so that for every invariant, ergodic probability measure $\mu$ has a Lyapunov exponent greater than $m$.
\end{thm}
The same conclusion has been recently proved for non-uniformly hyperbolic horseshoe which appears as perturbations of the first bifurcation of Hénon-like maps \cite{Ta13}. 

A second result is the following. 
\begin{thm}[Main result]\label{Main}
Every strongly regular regular Hénon-like diffeomorphisms $f$ leaves invariant a unique probability of maximal entropy $\nu$. Moreover $\nu$ is equi-distributed on the periodic points of $f$, finitarily Bernoulli, exponentially mixing and it satisfies the central limit Theorem. 
\end{thm}

A \emph{Bernouilli shift} is the shift dynamics of $\Sigma_N:=\{1,\dots, N\}^\mathbb Z$ endowed with the product probability $p^\mathbb Z$ spanned by a probability $p=(p_i)_{i=1}^N$ on $\{1,\dots, N\}$. The entropy of the probability $p^\mathbb Z$ is $h_{p}=-\sum_i p_i\log p_i$.
 By Ornstein and Kean-Smorodinsky isomorphism Theorems, any two Bernouilli shifts $(\Sigma_N, p^\mathbb Z)$ and $(\Sigma_{N'}, p'^{\mathbb Z})$ with the same entropy $h_{p}=h_{p'}$  are \emph{finitarily isomorphic} \cite{KS79}.  A bi-measurable isomorphism is \emph{finitary} if it and its inverse send open sets to open sets, modulo null sets.
%
 
To be \emph{finitarily Bernoulli} means that the dynamics, with respect to the maximal entropy measure, is  finitarily isomorphic  to a Bernouilli shift.


The \emph{central limit Theorem} is that for every  Hölder function $\psi$ of $\nu$-mean $0$, such that $\psi\not= \phi-\phi\circ f$ for any $\phi$ continuous, there exists $\sigma>0$ such that $\frac 1{\sqrt n} \sum_{i=1}^n \Psi\circ f^i$ converges in distribution (w.r.t. $\nu$)  to the normal distribution with mean zero and standard deviation $\sigma$.
    
The measure  $\nu$ is \emph{exponentially mixing} if there exists $0<\kappa<1$ such that for every pair of functions  of the plane $g\in L^\infty(\nu)$ and $h$ Hölder continuous, there is $C(g,h)>0$ satisfying for every $n\ge 0$:
\[Cov_\nu(g,h\circ f^n)<C(g,h) \kappa^n,\text{\; with $Cov$ the covariance.}\]

\thanks{ This work has been Partially supported by the Balzan Research Project of J. Palis and the project BRNUH of  Sorbonne Paris Cité university. I am very grateful to M. Lyubich for presenting me his problem, and its geometric vision of it. I am very thankful to O. Sarig for many explanations on the concept of entropy in symbolic dynamics. I would like also to acknowledge  M. Benedicks, J.-P. Thouvenot, F. Ledrappier, J. Buzzi, Y. Pesin, S. Senti and M. Viana for helpful discussions.}

\section*{Structure of the paper}
\paragraph{In Section \ref{rappeldesdef},} we explain the notion of strong regularity. In order to make   this concept transparent, we will state first Yoccoz' definition in the one dimensional case \cite{Y} and then the definition of \cite{berhen}. The definition of strong regularity involves a countable set of symbols $\sA$ and a certain algebraic structure on a subset of $\sA$-words that we call puzzle algebra.   
To each symbol $\sa\in \sA$ is associated 
two graph transforms and an integer $n_\sa$ called the order.

\paragraph{In Section \ref{defregularsection},} for every strongly regular Hénon like map $f$, we use the alphabet $\sA$ to encode some points $z$ in the neighborhood of the attractor as a sequence $\underline \sa(z)=(\sa_i)_i\in \sA^{\mathbb N}$.  Whenever the sequence $(\sa_i)_i$ has its orders which satisfy 	a certain linear bound from above, the sequence $\underline \sa(z)$ and the point $z$ are called \emph{regular}. This defines a subset of sequences $\tilde \sR \subset \sA^\mathbb N$ and a subset $\tilde {\mathcal R}$ in the neighborhood of the attractor. Actually $\tilde {\mathcal R}$ is a fibration by local stable manifolds $(W^s_{\underline \sa})_{\underline \sa\in \tilde \sR}$ as proved in Corollary \ref{geodesvarstable}. 
Moreover the curve $W^s_{\underline \sa} $ is $(1/|\log b|)^k$-contracted by $f^k$ for every $k\ge 0$, whereas  its normal vectors are expanded by a factor $m^k$, with $m>1$, for $k$ large enough.
 
 For $x\in W^s_{\underline \sb}$ with $\underline \sb\in \tilde \sR$, we put $\underline \sa(x):=\underline \sb$.
 
In Proposition \ref{eventually regu} we show that every invariant ergodic probability measure $\mu$ has its support either included in the orbit of $\tilde {\mathcal R}$, either in a certain uniformly hyperbolic set $\hat K_\square$, or in the fixed points $\{A,A'\}$. This implies Theorem \ref{MainNew}. 

Then, we consider the subset $\sR$ of $\tilde \sR$ made by the points which  return infinitely many times in $\tilde { \sR}$, by the shift map $\tilde \sigma$ of $\sA^{\mathbb N}$: 
\[\sR:=\{\sa=(\sa_i)_{i\ge 0}\in \tilde \sR:\; (\sa_{i+N})_{i\ge 0}\in \tilde \sR,\quad \text{for infinitely many }N\ge 0\}\]
It follows from the definition that the points of $\sR$ come back infinitely many times in $\sR$.

This split $\tilde {\mathcal R}=\cup_{\underline \sa\in \tilde \sR}W^s_{\underline \sa}$ into two subsets: 
\[\mathcal R=\cup_{\underline \sa\in \sR} W^s_{\underline \sa}\quad \text{and}\quad \mathcal E=\cup_{\underline \sa\in \tilde \sR\setminus \sR}W^s_{\underline \sa}\;.\] 

\paragraph{In Section \ref{sectionMarkov},} we define a Young tower on a subset $\Lambda$ whose obit supports the same probability measures as the orbit of $\mathcal R$, and so that the first return time of the dynamics in $\Lambda$ corresponds to the return time given by the tower structure.  The latter property does not appear in \cite{YW}, despite they construct an encoding which implies the existence (but not the uniqueness) of the maximal entropy measure. 

For this end, given $\underline \sa\in \sR$, we define $N_{\sR}(\underline \sa)$ as the first return time in $\sR$ of $\underline \sa$ by the shift dynamics $\tilde \sigma$ of $\sA^\N$. This defines  combinatorial return time $N_\sR(\underline \sa)=n_{\sa_0}+\cdots +n_{\sa_{N_{\sR}(\underline \sa)-1}}$ and a first return map $\tilde \sigma^\sR$ of $\sR$ with $\tilde \sigma^{\sR}(\underline \sa)= \tilde \sigma^{N_\sR(\underline \sa)}(\underline \sa)$. 

The map $\tilde \sigma^\sR$ is semi conjugated, via $x\in \mathcal R \mapsto \underline \sa(x)\in \sR$, with the  first combinatorial return map $f^{\mathcal R}$ of $x\in \mathcal R$ into $\mathcal R$, with $f^{\mathcal R}(x)= f^{N_\sR(\underline \sa(x))}(x)$. Then we put:
\[  R:=\bigcap_{n\ge 0} (f^{\mathcal R})^n(\mathcal R)\; .\]

In Proposition \ref{EgaliterdesR}, we state that  the orbits of $R$ and $\mathcal R$ support the same invariant probability measures.  In Proposition \ref{lift}, we prove that the first return map of $R$ into  $R$ is equal to the first combinatorial return map $f^{\mathcal R}$. 

The latter proposition is new and crucial since it implies that $R$ is in bijection with the inverse limit $\overleftarrow \sR$ of $\sR$ for the $\tilde \sigma^{\sR}$-dynamics. This enables a precise combinatorics study of the invariant measures in the orbit of $\mathcal R$. 

 In subsection  \ref{thesubsectionlambda}, we push forward the set $R$ to define a set $\Lambda$ which supports a structure of Young tower. This push forward corresponds to one iteration of the shift $\tilde \sigma$ of $\sA^\N$.  Thus $\Lambda$ is still bijectively encoded via a map $\overleftarrow \sb$  by a subset $\sL= \tilde \sigma(\overleftarrow \sR)\subset \sA^{\Z}$ (Prop. \ref{Bij2}). 
 

 In Proposition \ref{EgaliteracLambda}, we prove that every invariant ergodic probability measure is either supported by the orbit of $\{A,A'\}\cup \mathcal E\cup K_\square$, either it is supported by the orbit of $\Lambda$.

 Splitting the set $\sL:=\sL^u\cdot \sL^s$, with $\sL^s\subset \sA^\N$ and $\sL^u\subset \sA^{\mathbb Z^-}$, we can define canonical stable and unstable manifolds. 
 
More precisely, to every $\underline \sb\in \sL^s$ is associated a long stable curve $\gamma^s(\underline \sb)$ which is $(1/\log \; b)^k$  by $f^k$, for all  $k\ge 0$ by Claim \ref{gammas}.
 
Also for every $\overline \sb\in \sL^u$ is associated a long unstable curve $\gamma^u(\overline \sb)$ which is $m^{-k}$ contracted by $f^{-k}$, for all $k\ge 0$ by Claims \ref{gammau}, for a uniform $m>1$. 

The set of words $\sS:=\{\sa_1(z)\cdots \sa_{N_\sR(\underline \sa)}:\; \underline \sa=(\sa_i)_{i\ge 0}\in \sR\}$ is countable and is used to encode $\Lambda$. In Proposition \ref{Markov}, we prove that $\sL$ is equal to $\sS^\mathbb Z$ without the stable set of a fixed point:
 \[\sL=  \sS^\mathbb Z\setminus \sA^{(\Z)}\cdot (\ss_-)^\N\;, \; \quad \text{for }\ss_-\in \sA .\]

This implies that $\Lambda$ has a Markov partition given by the countable alphabet $\sS$.  In particular:
\[\bigsqcup_{\sL^u} \gamma^s \cap \bigsqcup_{\sL^u} \gamma^u=\Lambda\]

The sets $\Lambda^s_\sg:= \cup_{\underline \sa\in\sL^s_\sg} \gamma^s(\underline \sa)\cap \Lambda=i(\sL^u\cdot \sL^s_\sg)$, among  $\sg\in \sS$, defines a Markov partition  of $\Lambda$, and are sent by $f^\Lambda$ onto respectively $\Lambda^u_\sg:= \cup_{\overline  \sa\in\sL^u_\sg} \gamma^u(\underline \sa)\cap \Lambda$.
  
We show then in Claims \ref{Y1}, \ref{Y2}, \ref{Y5} that this Markov partition satisfies the Young tower properties $(Y_1)-(Y_2)-(Y_3)$ and $(Y_5)$ of \cite[\textsection 6]{Pe10}.

In Proposition \ref{propSPR}, we show that the cardinality of pieces of the Markov partition $(\sL_\sg^s)_{\sg\in \sS}$ with induced time equal to $m\ge 1$ is at most $2e^{2 m/\sqrt M}$.
 As the dynamics on $\cup_{k\ge 0} f^k (\Lambda)$ has entropy close to be $\ge \log 2$, we deduce that the Young tower induces a conjugacy between $f|\cup_{k\ge 0}f^k(\Lambda)$ and a strongly positive recurrent Markovian (mixing) shift $\sigma \colon \Omega_G\to \Omega_G$ up to the stable set of a periodic point $\hat A$.
 
With  $\Omega_G'= \Omega_G\setminus W^s(\hat A)$, the conjugacy $i\colon \Omega_G'\to \cup_{k\ge 0}f^k(\Lambda)$ is shown to be Hölder continuous in Claim \ref{Holder}. In Claim \ref{homeo} we show that $i$ is a homeomorphisms.

 By \cite{BBG06}, a strongly positive recurrent mixing Markov shift is finitarily Bernoulli (see Prop. 2.3 of \cite{BBG06} and \cite{Ru82}). Actually, as $W^s(\hat A)$ has measure zero for the maximal entropy measure, using the fact that $i$ is a homeomorphism, it comes that $f|\cup_m f^m(\Lambda)$ is finitarily Bernoulli. 
 
Also the maximal entropy measure of $f|\cup_m f^m(\Lambda)$ exists, is unique, exponentially mixing, and it satisfies the central limit theorem, by Hölder continuity of $i$ and the following:
\begin{theo}[Cyr-Sarig, Thm. 1.1-2.1 \cite{sarig2}]\label{cyr-sarig}
Let $\Omega_G$ be a topologically mixing countable Markov chain which is strongly positive recurrent and with finite topological entropy.
Then there exists a unique maximal entropy probability; this measure satisfies the central limit theorem and it is exponentially mixing.\end{theo}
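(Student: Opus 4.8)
The plan is to deduce every assertion from the thermodynamic formalism of topologically mixing countable Markov shifts, applied to the zero potential $\phi\equiv 0$, whose equilibrium state is precisely the measure of maximal entropy. First I recall Sarig's generalised Ruelle--Perron--Frobenius (RPF) theorem: for a recurrent potential with summable variations the transfer operator $L_\phi$ admits a positive eigenfunction $h$ with $L_\phi h=\lambda h$ and a conservative $\sigma$-finite eigenmeasure $\nu$ with $L_\phi^{\ast}\nu=\lambda\nu$, where $\lambda=e^{P_G(\phi)}$ and $P_G$ is the Gurevich pressure; when $\phi$ is \emph{positive} recurrent one may normalise $\int h\,d\nu=1$, and then $d\mu:=h\,d\nu$ is an invariant probability which is the unique equilibrium state of $\phi$. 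For $\phi\equiv 0$ the variations vanish identically and $P_G(0)=h_{top}(\sigma)<\infty$ by hypothesis, so the only point to check for existence is recurrence. Strong positive recurrence supplies it: decomposing a loop at a fixed vertex $e$ into first-return loops gives the renewal identity $\sum_n Z_n X^n=\bigl(1-\sum_n Z_n^{\ast}X^n\bigr)^{-1}$, so $R<R_*$ forces $\sum_n Z_n^{\ast}R^{n}=1$ and hence $\sum_n Z_nR^n=+\infty$ with $\sum_n nZ_n^{\ast}R^n<\infty$, which is exactly positive recurrence. Thus $\mu$ exists, and being the equilibrium state of $\phi\equiv 0$ it realises $h_\mu=P_G(0)=h_{top}(\sigma)$; by the variational principle for countable Markov shifts $\mu$ is a measure of maximal entropy.

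Uniqueness and the statistical properties all flow from a \emph{spectral gap} for the normalised transfer operator $\widehat L f:=\lambda^{-1}h^{-1}L_\phi(hf)$, and producing this gap is where the strict inequality $R<R_*$ is really used. The key step is to build a Banach space of observables on $\Omega_G$ --- after the standard reduction to the one-sided shift, a space of locally H\"older functions with a weighted norm adapted to the cylinder structure --- on which $\widehat L$ is quasi-compact, with $1$ a simple isolated eigenvalue and the rest of the spectrum inside a disc of radius $\theta<1$. I would follow the Cyr--Sarig (and Gou\"ezel) route: cut $\Omega_G$ along first returns to $e$, express $\widehat L$ through the induced return operators, and observe that $R<R_*$ says precisely that the operator-valued first-return series has spectral radius strictly smaller than that of the full series; this strict gap is the analytic input that forces the essential spectral radius of $\widehat L$ strictly below $1$ (it is the infinite-alphabet analogue of Vere--Jones's classification of recurrence; morally, the first-return time to $e$ has an exponentially small tail with respect to the conformal measure). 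Once quasi-compactness is in hand, uniqueness of the MME is immediate (a second equilibrium state would yield a second eigenmeasure at $\lambda$, contradicting simplicity); expressing the correlation $Cov_\mu(g,h\circ\sigma^n)$ through $\widehat L^n$ and using its spectral decomposition gives $Cov_\mu(g,h\circ\sigma^n)=O(\theta^n)$ for $h$ H\"older and $g\in L^\infty(\mu)$, i.e. exponential mixing; and the central limit theorem follows from the Nagaev--Guivarc'h perturbation method applied to the analytic family $t\mapsto\widehat L_{it\psi}$, whose leading eigenvalue $\lambda(t)$ is analytic near $0$ with $\lambda(0)=1$ and $\sigma^2:=-\lambda''(0)\ge 0$, the variance being strictly positive unless $\psi$ is a continuous coboundary.

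Finally one passes from the one-sided to the two-sided shift by the natural-extension argument: lift $\mu$ to the (then unique) two-sided measure of maximal entropy, and use the telescoping lemma of Sinai and Bowen to replace any H\"older observable on the two-sided $\Omega_G$ by a cohomologous one, with H\"older transfer function, depending only on the nonnegative coordinates; exponential mixing and the CLT then descend from the one-sided statements. The main obstacle is the first point --- setting up the functional-analytic framework so that $\widehat L$ is genuinely quasi-compact: on naive spaces such as $C^0(\Omega_G)$ the transfer operator over an infinite alphabet is never compact and has essential spectral radius $1$, so the loop decomposition is unavoidable, and it is exactly the strict gap $R<R_*$ that makes the induced scheme contracting. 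Granting the spectral gap, uniqueness, the exponential mixing rate, the CLT and the passage to the two-sided shift are routine.
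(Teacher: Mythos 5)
This statement is a theorem of Cyr and Sarig that the paper \emph{cites} and does not prove, so there is no internal argument of the paper to compare your proposal against. (The paper even remarks in a footnote that the result is stated there for one-sided shifts and that ``well-known tricks'' extend it to two-sided shifts --- a sentence you have unpacked correctly via the Sinai--Bowen coboundary reduction.) Your sketch is a faithful rendering of the actual Cyr--Sarig/Sarig proof: take $\phi\equiv 0$; get existence of a conformal pair $(h,\nu)$ from the generalised RPF theorem, with the renewal identity $\sum Z_nX^n=\bigl(1-\sum Z_n^{\ast}X^n\bigr)^{-1}$ and the strict inequality $R<R_*$ supplying positive recurrence (so $\sum Z_n^{\ast}R^n=1$ and $\sum nZ_n^{\ast}R^n<\infty$, as in Vere--Jones); induce at a state $e$ to build a Banach space on which the normalised transfer operator is quasi-compact, with $R<R_*$ providing the exponential tail on first-return times that drives the essential spectral radius strictly below $1$; and read off uniqueness, exponential decay of correlations, and the Nagaev--Guivarc'h CLT from the spectral gap, finally lifting to the two-sided shift. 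Two small cautions to keep in mind if you were to flesh this out: first, quasi-compactness alone gives uniqueness of the MME among measures captured by the variational principle, so you still want finiteness of topological entropy (as assumed) and Gurevich's theorem in the background to rule out exotic maximisers; second, strong positive recurrence is what guarantees the normalising integral $\int h\,d\nu$ is finite --- plain recurrence would only give a $\sigma$-finite conservative measure and no equilibrium state. You flag both points correctly, so the proposal is sound.
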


 Moreover we get that the periodic points of $f|\cup_{m}f^m(\Lambda)$ are equidistributed w.r.t. the maximal entropy measure from the following classical result:
\begin{theo}[Thm D, \cite{V-J67}]
   If $\sigma$ is mixing, strongly positive recurrent, Markov shift then the following converges weekly to the maximal entropy measure, as $p\to \infty$:
   \[\frac{1}{\text{Card }Fix\,\sigma^p}\sum_{x\in Fix\,\sigma^p} \delta_{x}.\]
   Moreover, $\frac1p\log (\text{Card }Fix\,\sigma^p)$ converges to the topological entropy of $\sigma$.
\end{theo}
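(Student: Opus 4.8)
The plan is to reduce the statement to the renewal-theoretic asymptotics of the powers of the transition matrix $M=(m_{ij})_{i,j\in V}$, following Vere-Jones' classification of irreducible nonnegative matrices; note that strong positive recurrence implies positive recurrence, so that theory applies. Write $\lambda:=e^{h_{top}}$, and keep the eigenvector $(\alpha_i)_i$, the eigencovector $(\beta_i)_i$ and the normalization $\sum_i\alpha_i\beta_i=1$ as recalled above, so that the candidate limit $\mu_{\max}$ is the Markov measure with stationary law $\pi_i=\alpha_i\beta_i$ and one-step probabilities $p_{ij}=e^{-h_{top}}m_{ij}\beta_j/\beta_i$. Two elementary bookkeeping identities start things off. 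First, a $p$-periodic point is a closed loop of length $p$, so $\mathrm{Card}\,Fix\,\sigma^p=\mathrm{tr}(M^p)=\sum_{v\in V}(M^p)_{vv}$ (finite here). Second, for an admissible cylinder $C=[c_0,\dots,c_{k-1}]$ and $p\ge k$, a $p$-periodic point passes through $C$ exactly when $(a_k,\dots,a_{p-1})$ is a path from $c_{k-1}$ back to $c_0$, so their number is $(M^{p-k+1})_{c_{k-1},c_0}$ and
\[\mu_p(C)=\frac{(M^{p-k+1})_{c_{k-1},c_0}}{\mathrm{tr}(M^p)}.\]
Everything is then governed by the behaviour of $\lambda^{-n}(M^n)_{vw}$.

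The core is to establish two facts: (i) $\lambda^{-n}(M^n)_{vw}\longrightarrow\alpha_v\beta_w$ for all $v,w\in V$; and (ii) $\lambda^{-p}\,\mathrm{tr}(M^p)=\sum_v\lambda^{-p}(M^p)_{vv}\longrightarrow\sum_v\alpha_v\beta_v=1$. For (i), fix a reference symbol $e\in V$, let $F(z)=\sum_n Z_n^{*}z^n$ be the first-return generating function at $e$ and $Z(z)=\sum_n Z_n z^n=1/(1-F(z))$ the loop generating function; finiteness of the entropy gives that the radius of convergence of $Z$ equals $R=\lambda^{-1}$, and mixing of $\sigma$ forces the g.c.d. of $\{n:Z_n^{*}>0\}$ to be $1$. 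Strong positive recurrence, $R<R_{*}$, makes $F$ analytic at $R$; combined with $R$ being the radius of convergence of $Z=1/(1-F)$ this forces $F(R)=1$ with $F'(R)<\infty$. The aperiodic Erd\H{o}s--Feller--Pollard renewal theorem applied to the probability sequence $(Z_n^{*}R^n)_n$ then yields $\lambda^{-n}Z_n\to (RF'(R))^{-1}$, and the general entry $\lambda^{-n}(M^n)_{vw}$ follows by decomposing a path from $v$ to $w$ according to its first and last visit to $e$, the limit being the product of the appropriate right- and left-eigenvector coordinates, normalized to $\alpha_v\beta_w$ by $\sum_i\alpha_i\beta_i=1$. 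For (ii), strong positive recurrence is used a second and decisive time: $R<R_{*}$ also provides a bound $\lambda^{-n}(M^n)_{vw}\le C\,\alpha_v\beta_w$ uniform in $n$ (equivalently, the transfer operator normalized by $\lambda$ has a spectral gap on a suitable Banach space, controlling the subexponential corrections), so $\sum_v\alpha_v\beta_v=1<\infty$ dominates and dominated convergence upgrades the diagonal case of (i) to (ii).

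Combining (i)--(ii) with the formula for $\mu_p(C)$ gives that $\mu_p(C)$ converges to $e^{-(k-1)h_{top}}$ times the relevant product of eigenvector and eigencovector coordinates, which a one-line telescoping of $\pi_{c_0}p_{c_0c_1}\cdots p_{c_{k-2}c_{k-1}}$ (using $m_{c_jc_{j+1}}=1$) identifies as exactly $\mu_{\max}(C)$; hence $\mu_p(C)\to\mu_{\max}(C)$ for every cylinder $C$. To promote this to weak-$*$ convergence on the non-compact space $\Omega_G$ I would check that $\{\mu_p\}_p$ is tight: by the same uniform bound, the $\mu_p$-mass of the periodic points whose symbol sequence ever leaves a large finite set $V_0\subset V$ is at most $C\sum_{v\notin V_0}\alpha_v\beta_v$, which is uniformly small; since cylinders are clopen and generate the Borel $\sigma$-algebra, tightness together with $\mu_p(C)\to\mu_{\max}(C)$ forces $\mu_p\rightharpoonup\mu_{\max}$. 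The ``Moreover'' part is (ii) in logarithmic form: $\mathrm{tr}(M^p)\sim\lambda^p$, hence $\tfrac1p\log\mathrm{Card}\,Fix\,\sigma^p\to\log\lambda=h_{top}$.

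The main obstacle is the non-compactness of $\Omega_G$: both the convergence of the infinite sum $\lambda^{-p}\,\mathrm{tr}(M^p)$ in (ii) and the tightness of $\{\mu_p\}$ demand a bound on $\lambda^{-n}(M^n)_{vw}$ that is uniform in $n$ and summable over the alphabet, and it is precisely here — not in the renewal theorem (i), which needs only positive recurrence — that strong positive recurrence $R<R_{*}$ is indispensable; in the merely (null- or positively-) recurrent case mass can escape to infinity in the alphabet and equidistribution can fail. A route packaging all of this at once is the Ruelle--Perron--Frobenius / transfer-operator formalism for countable Markov shifts: strong positive recurrence yields a spectral gap for the operator normalized by $\lambda$, from which (i), the uniform domination, and the trace asymptotics $\sum_{\sigma^p\underline a=\underline a}g(\underline a)\sim\lambda^p\int g\,d\mu_{\max}$ all follow directly, and the two assertions of the theorem become immediate consequences.
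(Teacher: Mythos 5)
The paper does not prove this statement: it is quoted from Vere--Jones \cite{V-J67} as Theorem D, so there is no internal proof to compare against. Your sketch, on its own merits, is a sound reconstruction of the classical argument and correctly locates the one genuine subtlety. The bookkeeping identities $\mathrm{Card}\,Fix\,\sigma^p=\mathrm{tr}(M^p)$ and $\mu_p(C)=(M^{p-k+1})_{c_{k-1},c_0}/\mathrm{tr}(M^p)$ are right; the reduction to $\lambda^{-n}(M^n)_{vw}\to\alpha_v\beta_w$ via the Erd\H{o}s--Feller--Pollard renewal theorem applied to the probability sequence $(Z_n^*R^n)$ is the standard Vere--Jones route, with mixing giving aperiodicity and $R<R_*$ giving $F$ analytic at $R$, hence $F(R)=1$ and $F'(R)<\infty$; and you correctly point out that the passage from pointwise entrywise convergence to the trace asymptotics $\lambda^{-p}\mathrm{tr}(M^p)\to 1$, as well as tightness of $\{\mu_p\}$, requires a uniform-in-$n$, summable-over-$V$ domination, which is exactly where strong positive recurrence (rather than mere positive recurrence) enters, typically via the spectral gap for the normalized transfer operator.

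One point you should tidy: your closing telescoping of $\pi_{c_0}p_{c_0c_1}\cdots p_{c_{k-2}c_{k-1}}$ does not actually reproduce the renewal limit if one uses the formula $p_{ij}=e^{-h_{top}}M_{ij}\beta_j/\beta_i$ as written in the paper's preamble, since $\beta$ there is the \emph{left} eigenvector ($\beta M=\lambda\beta$); with that choice the row sums $\sum_j p_{ij}$ are not $1$, and the telescoping yields $\lambda^{-(k-1)}\alpha_{c_0}\beta_{c_{k-1}}$, whereas the renewal limit of $\mu_p(C)$ is $\lambda^{-(k-1)}\alpha_{c_{k-1}}\beta_{c_0}$. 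The standard Parry formula is $p_{ij}=\lambda^{-1}M_{ij}\alpha_j/\alpha_i$ in terms of the \emph{right} eigenvector $\alpha$ (for which $\sum_j p_{ij}=(M\alpha)_i/(\lambda\alpha_i)=1$), and with that formula the telescoping gives $\lambda^{-(k-1)}\alpha_{c_{k-1}}\beta_{c_0}$, matching your renewal limit. So the discrepancy appears to be a typo in the paper's statement of the Parry measure rather than an error in your argument, but as written your ``one-line telescoping'' silently corrects it; you should say so explicitly. Apart from that, the only place requiring more care in a full writeup is the claimed domination $\lambda^{-n}(M^n)_{vw}\le C\alpha_v\beta_w$: it is true under SPR but is itself a nontrivial consequence of the spectral gap (Cyr--Sarig), not of the pointwise renewal limit alone, and without SPR both the domination and the trace asymptotics can indeed fail, as you note.
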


This implies that the restriction of $f$ to $\cup_{n\ge 0} f^n(\Lambda)$ satisfies the conclusion of Theorem \ref{Main}.

This section finishes by proving the first return time property (Propositions \ref{EgaliterdesR} and \ref{lift}) by using a combinatorial argument using basically the formalisms of Puzzle algebra (combinatorial division). It uses also an argument based on Pesin theory whose proof is postponed to Appendix \ref{prinvLyapExp}.
\bigskip

\paragraph{In Section \ref{exception},} we achieve the proof of Theorem \ref{Main}.
By Proposition \ref{EgaliteracLambda}, it suffices to prove that $\{A,A'\}\cup \cup_n f^n(\mathcal E)\cup \hat K_\square$ supports only invariant ergodic probability with negligible entropy w.r.t. the entropy of $f$, and that the number  of periodic points therein is negligible w.r.t. those in $\cup_n f^n(\Lambda)$. 

In other words, we prove the following to achieve the proof of Theorem \ref{Main}.
\begin{itemize} 
\item The ergodic probability measures of $f$ which are not contained in $\cup_{n\ge 0} f^n(R)$, and so contained in $\hat K_\square$, $\{A,A'\}$ or $\cup_{n\ge 0} f^n(\mathcal E)$ have small entropy. 
\item The number of fixed points of $f^n|\hat K_\square\cup \cup_{n\ge 0} f^n(\mathcal E)\cup \{A,A'\}$ is negligible w.r.t. the number of fixed points of $f^n|\cup_{n\ge 0} f^n(\Lambda)$. 
\end{itemize}

The bounds for $\hat K_\square$ are easily computed in Proposition \ref{hKsquare}, since $\hat K_\square$ is a mere uniformly hyperbolic horseshoe. The entropy of the measures in $\cup_{n\ge 0} f^n(\mathcal E)$ are bounded by using Ledrappier-Young entropy formula after we give an upper bound on the unstable Hausdorff dimension of its hyperbolic measure in Proposition \ref{HDexceptionel}.
The number of periodic points is bounded from above by using combinatorial tools introduced (such as the division $/$) in the latter section.

\paragraph{In Appendix \ref{prinvLyapExp},} we prove Propositions \ref{eventually regu} and \ref{EgaliterdesR} by using the fact that almost every point of an invariant probability measure has well defined Lyapunov exponents. 

\paragraph{In Appendix \ref{sectionpreuvegeoYg},} we prove the statements relative to the geometry of the partition and 
of the long stable leaves involved. This is done by looking at the expansion and the contraction of $(Df^k)_k$ at their points and then by using classical arguments of \cite{BC2, YW, berhen}. 


At the end, {\bf an index gives the notations and definitions}. 

\paragraph{Open questions}
This manuscript implies that by Theorem 3.1 of \cite{Pe10}, a strongly regular Hénon like map has a unique equilibrium state for many potentials. It is natural  to ask:
\begin{ques}[Pesin-Senti-Zhang] Does every strongly regular map enjoy a unique equilibrium state for potentials of the form $s\cdot \log \big|\det\, Tf {|W^u}\big|$?
\end{ques} 
In this work, we answer positively this question for $s=0$. To get other  values of $t$, it would suffice to extract from the Young tower $(\Lambda, (\Lambda_\sg)_\sg)$ another tower with similar properties and which satisfies moroever the distortion bound $(Y_4)$ of \cite{Pe10}. This seems possible by using Prop. 2.9 of \cite{berhen}.  

Another natural question is:
\begin{ques} What is the Hausdorff dimension of the Hénon attractor? 
\end{ques}
It is easy to show that the Lebesgue measure of the attractor is zero. We except that the dimension should be close to 1 for $b$ small. From this work, it remains basically to study the set of infinitely irregular points in the attractor.

\section{Strong regularity}
\label{rappeldesdef}
In this section we recall Yoccoz' proof of Jakobson's Theorem, and how it has been generalized in \cite{berhen} to prove Benedicks-Carleson's Theorem. 

\subsection{Strongly regular quadratic maps}

For $a$ greater but close to $-2$, the quadratic map $P\colon  x\mapsto x^2+a$ has two fixed points $-1\approx A_0< A_0'\approx 2$ which are hyperbolic. The segment $[-A_0', A_0']$ is sent into itself by $P$, and its boundary bounds the basin of infinity. All the points of $(-A_0',A_0')$ are sent by an iterate of $P_a$ into $\R_\se:=[A_0,- A_0]$.

Yoccoz' definition of strongly regular maps is based on the position of the critical value $a$ with respect to the preimages of $A_0$. To formalize this, he used his concept of puzzle pieces.

\subsubsection{Puzzle pieces}
\begin{defi}
A puzzle piece $\sa= (\R_\sa, n_\sa)$ is the pair of a segment $\R_\sa$ of $\R_\se$ and an integer $n_\sa$, so that $P^{n_\sa}|\R_\sa$ is a bijection from $\R_\sa$ onto $\R_\se:=[A_0,- A_0]$. 
\end{defi}
For instance $\se:= \{\R_\se, 0\}$ is a puzzle piece, called \emph{neutral}. 

To define the simple puzzle pieces, let us denote by $M$ the minimal integer such that $P^M(a)$ belongs to $[A_0,-A_0]$; $M$ is large since $a>-2$ is close to $-A_0'\approx -2$.

For $i\ge 0$, let $A_i:= -(P|\R^+)^{-i}(-A_0)$. Note that $(A_i)_{i\ge 0}$ is decreasing and converges to $-A_0'$. Also $[A_{i+1}, A_{i}]$ is sent bijectively by $P^{i+1}_a$ onto $\R_\se$. The same holds for $[ -A_i, -A_{i+1}]$.

By definition of $M$, the critical value $a$ belongs to  $[ A_{M}, A_{M-1}]$.
 Hence for $2\le i\le M$, there is a segment $\R_{\ss^{i}_-}\subset \R^-$ and a segment $\R_{\ss^{i}_+}\subset \R^+$ both sent bijectively by $P$ onto $[- A_{i-1},-A_{i-2}]$.

\begin{defi}[Simple puzzle piece]
The pairs of the form $(\R_{\ss^{i}_\pm}, i)$ for $2\le i\le M$ are puzzle pieces called \emph{simple}. There are $2(M-1)$ such pairs. The set of simple puzzle pieces is denoted by $\sY_0=\{\ss^i_\pm ; 2\le i\le M\}$.  \label{Psimple}\index{$\sY_0$}
\end{defi}

Puzzle pieces enjoy two fundamental properties:
\begin{enumerate}
\item Two puzzle pieces $\sa$ and $\sb$ are nested or disjoint: 
\[\R_\sa\subset \R_\sb\text{ or } \R_\sb\subset \R_\sa \text{ or }  int\; \R_\sb\cap int\; \R_\sb=\varnothing\;.\]
\item For every puzzle piece $\sa$, for every perturbation of the dynamics, the hyperbolic continuities of the relevant preimages of the fixed point $A_0$ define a puzzle piece for the perturbation.
\end{enumerate}
\subsubsection{Building puzzle pieces}

The first operation is the so-called \emph{simple product} $\star$:
\begin{defi}[$\star$-product]
Let $\sa = (\R_\sa,n_\sa)$ and $\sb = (\R_\sb, n_\sb)$ be two puzzle pieces so that $\R_\sb \subset \R_\se$. 
Then, the puzzle piece $\sa\star \sb$ with segment $\R_{\sa \star \sb} =  (P^{n_\sa}|\R_\sa)^{-1}(\R_\sb)\subset \R_\sa$ and integer $n_{\sa\star \sb} = n_\sa +n_\sb$ is a puzzle piece : the map $P^{n_{\sa\star \sb}}$ sends bijectively $\R_{\sa\star \sb}$ onto $\R_\se$. 
\end{defi}\index{Simple product $\star$}

Note that the simple operation $\star$ is associative. Indeed for any puzzle pieces $\sa,\sb,\sc$, it holds:
\[\sa\star (\sb\star \sc)= (\sa\star \sb)\star \sc=:\sa\star \sb\star \sc\; .\]

We need another operation to construct pieces in the closure  $\R_\square$ of the complement of the simple pieces union in $\R_\se$:
 \[\R_\square:= cl(\R_\se\setminus \cup_{\sa\in \sY_0} \R_\sa)=P^{-1}_a([ -A_{M}, -A_{M-1}])\]\index{$\R_\square$}
 
This is a neighborhood of $0$ of length dominated by $2^{-M}$ when $a$ is close to $-2$.

This second operation is the so-called \emph{parabolic product} $\square$. 

\begin{defi}[$\square$-product]\index{Parabolic product $\square$}
Let $\sa$ and $\sb$ be two puzzle pieces so that $\R_\sb\subsetneq \R_\sa$ and so that $\R_\sb$ intersects $P^{M+1}(\R_\square)$ at a non trivial segment.
We notice that $P^{M+1}|\R_\square$ has two inverse branches, one $g_+$ with image into $\R^+$ and the other $g_-$ with image into $\R^-$. 

We define the parabolic pieces:
\[\square_+(\sa-\sb):=\{g_+(cl(\R_\sa\setminus  \R_\sb)), M+1+n_\sa\}
\quad \text{and}\quad \square_-(\sa-\sb):=\{g_-(cl(\R_\sa\setminus  \R_\sb)),M+1+ n_\sa\}\]
\end{defi}
A parabolic piece $\sp= \square_\pm(\sa-\sb)$ is never a puzzle piece. Indeed, with:
\[\{\R_\sp, n_\sp\}:= \{g_\pm(\R_\sa- int\,  \R_\sb), M+1+n_\sa\},\]
the segment $\R_\sp$ is sent by $P^{n_\sp}$ onto a connected component of $cl(\R_\se\setminus P^{n_\sa}(\R_\sb))\subsetneq\R_\se$.

We notice that the $\star$-product extends canonically to the set of parabolic and puzzle pieces: we can make simple product between those pieces. 

\subsubsection{Yoccoz' definition of strong regularity}

The main ingredient of Yoccoz' definition, is to ask for the existence of a sequence of puzzle pieces $\sc = (\sa_i)_{i\ge 1}$ so that with $\sc_k= \sa_1\star \cdots\star \sa_k$ the first return $P^M(a)$ belongs to a nested intersection of puzzle pieces $\cap_{k\ge 1} \R_{\sc_k}$:
\begin{equation}\tag{$SR_1$}
P^{M+1}(0)\in \bigcap_{k\ge 1} \R_{\sc_k}\; ,
\end{equation}\index{SR$_1$}
and so that $(\sa_i)_{i\ge 1}$ satisfies: 

\begin{equation}\tag{$\star$}
\sum_{j\le i\; \sa_j\notin \sY_0} n_{\sa_j} \le e^{-\sqrt M} \sum_{j\le i-1} n_{\sa_j},\quad \forall j\le i.
 \end{equation}
Moreover Yoccoz asked that there is a neighborhood $\hat \R_\se$ of $\R_\se$ so that every involved segment  $\R_{\sa_i}$ has a neighborhood $\hat \R_{\sa_i}$ which is sent bijectively by $P^{n_{\sa_i}}$ onto $\hat \R_\se$. The negativity of the Schwarzian derivative of $P$ gives then a distortion bound for $P^{n_{\sa_i}}|\R_{\sa_i}$. 

Such a hypothesis is assumed in particular for all simple pieces in $\sY_0$. This implies the existence of $c>0$ such that every $x\in \R_\sa$, $\sa\in \sY_0$, it holds that:
 \[\|\partial_x P^{n_\sa}\|\ge e^{cn_\sa}.\] 
Then Equation ($\star$) and the distortion bound implies:
\begin{equation}\tag{$\mathcal {CE}$}
\liminf_{n\to \infty} \frac1n\log \|\partial_xP^{n}(a)\| \ge c^-:=(1-e^{-\sqrt M} )c\;.\end{equation}
In particular, strongly regular unimodal map satisfies the Collet-Eckmann condition.

\subsubsection{Alternative definition of strong regularity}
The existence of an interval $\hat \R_{\sa_i}$ extending $\R_{\sa_i}$ is replaced by two other conditions: $h$-times and $(\diamondsuit )$. 

%

\begin{defi} A puzzle piece or a parabolic piece $\sa=(\R_\sa,n_\sa)$ is \emph{hyperbolic} if it satisfies the following condition:
\begin{equation}\tag{$h-times$} 
\forall z\in \R_\sa\text{ and }l\le n_\sa: \\
|\partial_x P ^{n_\sa}(z)| \ge e^{\frac{c}{3} (n_\sa-l)} |\partial_x P^l(z)|\; ,
\end{equation}\index{h-times}
with $c:= \log 2/2$ \index{$c$}.
\end{defi}

It is straight forward to see that a $\star$-product of hyperbolic pieces is hyperbolic.

Suppose that the map $P$  satisfies $(SR_1)$ with $(\sc_k)_{k\ge 1}$. We define the following countable set of symbols  $\sA:= \sY_0\sqcup\{\square_\delta(\sc_k-\sc_{k+1}): \; k\ge 0,\; \delta\in \{+,-\}\}$.\index{$\sA$}
\begin{prop}
Every puzzle piece $\sa$ is a simple product of pieces  in $\sA$.
\end{prop}
\begin{proof}
We proceed by induction on $\sa$. As the puzzle pieces  are nested or disjoint, either $\R_\sa$ is included in a simple piece $\R_\ss$ either it is included in $\R_\square$.

 In the first case,
$(P^{n_\ss}(R_\sa), n_\sa-n_\ss)$ is still a puzzle piece and by induction it is a product of parabolic and simple piece $\sa_1\star \cdots \star \sa_k$. Hence $\sa= \ss\star \sa_1\star \cdots \star \sa_k$. 

In the second case, $\R_\sa$ is either included in $\R^-$ or in $\R^+$. Also its first return in $\R_\se$ is $f^{M+1}(\R_\sa)$. Note that
$(f^{M+1}(\R_\sa),n_\sa-M-1)$  is still a puzzle piece. Let $k\ge 0$ be the greater integer so that $f^{M+1}(\R_\sa)$ is included into $\R_{\sc_k}$. Then $\R_\sa$ is included in $\R_{\square_\pm (\sc_k-\sc_{k+1})}$. Also its image by $f^{n_{ \square_\pm (\sc_k-\sc_{k+1})}}$ is also a puzzle piece and so we can use the induction hypothesis as above to achieve the proof. \end{proof}
\begin{defi}\index{Prime}
A puzzle piece $ \sa$ is \emph{prime} if it is a simple puzzle piece or if there exist parabolic pieces $\sp_1,\dots, \sp_k\in \sA$ and a simple puzzle piece $\ss\in \sY_0$ so that:
\[\sa= \sp_1\star \sp_2\star\cdots\star \sp_k\star \ss.\]
\end{defi}

Hence to obtain the hyperbolicity of any puzzle piece, it suffices to give a  combinatorial condition on the critical orbit which implies the hyperbolicity of all the simple pieces and all the parabolic pieces in $\sA$.
 This is the case if $P$ satisfies $(SR_1)$ with a sequence $\sc=(\sa_i)_i$ so that $P^{M+n_{\sc_i}}(a)\in \R_\se$ does not belong to an exponentially small neighborhood of $\partial \R_\se=\{A_0,-A_0\}$.  
 
To make the notation less cluttered, we denote $\ss^2_-$ and $\ss^2_+$ by respectively $\ss_-$ and $\ss_+$. These two puzzle pieces have their segment which is a neighborhood of respectively $A_0$ and $-A_0$ in $\R_\se$.
  
Likewise, the segments of the pieces 
$\ss_-^{\star k}:=\ss_-\star  \cdots \star \ss_-$ and  $\ss_+^{\star k}:=\ss_+\star \ss_-^{\star k}$ are neighborhoods of respectively $A_0$ and $-A_0$ in $\R_\se$.

The condition we ask is the following:
\begin{equation}\tag{$\diamondsuit $}
P^{M+1+n_{\sc_i}}(0)\notin \R_{s_-^{\star \aleph(i)}}\sqcup \R_{s_+^{\star \aleph(i)}}\;,
\end{equation}\index{$\diamondsuit $}
with $\aleph(0):=\left[\frac{\log M}{6c^+}\right]$ and for $i>0$, 
$\aleph(i):=\left[\frac{c}{6c^+}(i+M)\right]$, where $c^+:=\log 5$.\index{$aleph$@$\aleph$}\index{$c^+$}
Such a condition implies that every parabolic pieces is hyperbolic (see Prop. \ref{Proph} below).

The condition $(\diamondsuit)$ does hold if the sequence $\sc=(\sa_i)$ involved in $(SR_1)$ is \emph{common}:

\begin{defi}\index{Common}
A \emph{common sequence} $\sc=(\sa_i)_i$ is a sequence of puzzle pieces which  satisfies $(\star)$ and so that for every $i\ge 0$:
\begin{itemize}
\item the piece $\sa_i$ is either in $\sY_0$ or $\R_{\sa_i}$ is included in $\R_\square$.
\item  $\sa_i\star \cdots\star \sa_{i+\aleph (i)}\notin \{s_-^{\star \aleph(i)},s_+^{\star \aleph(i)}\}$.
\end{itemize}
\end{defi}
%


\begin{defi} The quadratic map $P$ is \emph{strongly regular} if there exists a common sequence $\sc=(\sa_i)_{i\ge 1}$ so that:
\begin{equation}\tag{$SR_1$}
P^{M+1}(0)\in \R_{\sc_k},\quad \text{with } \sc_k=\sa_1\star \cdots \star \sa_k.\end{equation}
\begin{equation}\tag{$SR_2$}
\text{Every puzzle piece $\sa_k$ is prime.}\end{equation}
\end{defi}

As announced, we have:
\begin{prop}[Prop 1.3 and 4.1 \cite{berhen}]\label{Proph}
If $P$ is strongly regular, then every simple piece and parabolic piece is hyperbolic. 
\end{prop}
As every puzzle pieces is a $\star$-product of parabolic and simple pieces, it comes:
\begin{coro} 
If $P$ is strongly regular, then every puzzle piece is hyperbolic.
\end{coro}
As for Yoccoz definition, this implies:
\begin{coro}\label{CEP} 
If $P$ is strongly regular, then it satisfies the Collet-Eckmann  Condition $(\mathcal {CE})$. 
\end{coro}

\subsection{Strongly regular Hénon like endomorphisms}

We now consider a $C^2$-map $f:=f_{a\, B} \colon (x,y)\mapsto (P(x)+y, 0)+ B_a(x,y)$ satisfying that:
\begin{itemize} 
\item the parameter $a>-2$ is close to $-2$, so that the first return time $M$ of $a$ by $P$ in $\R_\se$ is large.
\item A real number $b>0$ small w.r.t. $|a+2|$ (and is even small w.r.t. $e^{-e^{e^M}}$), bound the $C^0$-norm of $\det\; Df$ and the $C^2$-norm of $(x,y,a)\in [-3,3]^2\times \R\mapsto  B_a(x,y)$.\index{$b$}
\end{itemize}
Put $\theta:= |\log\, b|^{-1}$. \index{$\theta$} We notice that $\theta$ is small w.r.t. $e^{-e^M}$.
 
We observe that $f$ is $b$-close to $\hat P:=(x,y)\mapsto (x^2+a+y,0)$ which preserves the line $\R\times \{0\}$ and whose restriction therein is equal to the quadratic map $P$.  Hence, for $b$ small,  the fixed point $(A_0,0)$ for $\hat P$ persists as a fixed point $A$ of $f$. \index{$A$}

The strong regularity condition is related to the topology of the homoclinic tangle of $W^s(A; f)\cup  W^u(A; f)$.

To formalize this we generalize the definition of puzzle pieces for \emph{flat curves} that we will define in the sequel.  

First let us notice that the (compact) local stable manifold $\{(x,y)\in\R\times [-1,\infty):  x^2+a= A_0\}$ persists as a local stable manifold $W^s_{loc}(A; f)$ for $f_{a\;B}$. With the line $\{y=2\theta\}$ and the line $\{y=-2\theta\}$, the local stable manifold  $W^s_{loc}(A; f)$ bounds a compact diffeomorphic to a filled square denoted by $Y_\se$ (see fig. \ref{geometricmodele}).

Let us denote by $\partial^s Y_\se:= Y_\se\cap W^s_{loc}(A; f)$ and 
$\partial^u Y_\se:= Y_\se\cap \{y=\pm 2\theta\} $. 

Both sets consists of two connected curves whose union is $\partial Y_\se$.

\begin{defi}[flat stretched curve]
A   curve $S\subset Y_\se$ is \emph{flat} if it is the graph of a $C^{1+Lip}$-function $\rho$ over an interval $I\subset \R$, with $C^{1+Lip}$-norm at most\footnote{Actually, in \cite{berhen}, we ask the flat stretched curves to be the image by a certain map $y_\se$ of a graph of a function satisfying such bounds. Nevertheless the map $y_\se$ has its $C^{1+Lip}$-norm bounded and its inverse has its $C^{1+Lip}$-norm bounded by $\theta^{-1}$. Moreover all bounds on the graph transforms will have sufficiently room so that this does not change the statement of the propositions involving the flat curves.}   $\theta$.
\[\|\rho\|_{C^0}\le\theta,\quad \|D\rho\|_{C^0}\le\theta,
\quad \|Lip(D\rho)\|_{C^0}\le\theta\;.\]
 
The flat curve $S$ is \emph{stretched} if it is included in $Y_\se$ and satisfies that $\partial S\subset \partial^s Y_\se$. 
\end{defi}\index{Flat stretched curve}

For instance  $\se(S):=\{S, 0\}$ is a puzzle piece called \emph{neutral}. 

\subsubsection{Puzzle pieces}
A puzzle piece is always associated to a flat stretched curve $S$. 
\begin{defi} 
A \emph{ puzzle piece} \index{Puzzle piece}
 $\sa(S)$ of $S$ is the data of:
\begin{itemize}
\item an integer $n_\sa$ called the \emph{order of a puzzle piece} \index{Puzzle piece2@Order of a puzzle piece}
 of $\sa(S)$,
\item a segment $S_\sa$ of $S$ sent by $f^{n_\sa}$ to a flat stretched curve $S^\sa$.\end{itemize} 
A piece $\sa(S)=(S_\sa,n_\sa)$ is \emph{hyperbolic} if the following conditions hold:
\paragraph{\emph{h-times}}\index{h-times} For every $z\in S_\sa$, $w \in T_z S_\sa$ and every $l\le n_\sa$: $\|D_{z}f^{n_\sa}(w)\|\ge e^{\frac{c}{3} (n_\sa-l)}\cdot \|D_zf^l({w})\|$.
\end{defi}\index{h-times} \index{Hyperbolic piece}
We recall that $c=\log\, 2/2$.

In order to define the simple puzzle piece, we assume that $P^{M+1}(0)$ does not belong to  $\R_{s_-^{\star \aleph(0)}}\sqcup 
\R_{s_+^{\star \aleph(0)}}$. Hence the following $P$-forward invariant compact set:
$$K:=\{A_0',-A_0'\}\cup \bigcup_{i\ge 0} \{A_i, -A_i \}\cup \bigcup_{\ss\in \sY_0} \partial \R_s$$ is at bounded distance from $0$ and so is uniformly expanding for $P$. 

We remark that the set $K\times\{0\}$ is uniformly hyperbolic for $\hat P$. For $z_0=(x_0,0)\in K\times \{0\}$, 
the component $W^s_{loc}(z_0;\hat P)$ containing $z_0$ of  
\[\{(x,y)\in\R\times [-1,\infty)\colon x^2+y=x_0^2\}\]
is a local stable manifold of $z_0$. We notice that $W^s_{loc}(z_0, \hat P)$ is an arc of parabola.
%
%

By hyperbolic continuity, for $b$ sufficiently small, the family of curves $(W^s_{loc}(z_0))_{z_0\in K\times \{0\}}$ persists as a family $(W^s_{loc}(z_0; f))_{x_0\in K\times \{0\}}$ so that:
\begin{equation}\label{lamination} f (W^s_{loc}(z_0; f))\subset W^s_{loc}(\hat P(z_0); f)\; .\end{equation}



Also for every $\sa\in \sY_0\cup\{e, \square\}$, the endpoints $(x_-,x_+)$ of $\R_\sa$ belong to $K$, and the curves $W^s((x_\pm,0); f)$ are sufficiently close to 
$W^s((x_\pm,0); \hat P)$ so that they stretch across the strip $\R\times [-2\theta,2\theta]$ to bound a compact set $Y_\sa$ close to  $S_\sa\times \{0\}$  and diffeomorphic to a filled square (see Fig. \ref{geometricmodele}).  The set $Y_\sa$ is called the \emph{box}\footnote{Also called simple extension in \cite{berhen}.} associate to $\sa$.\index{Box}\index{$Y_\square$}\index{$Y_\se$}

\begin{figure}[h]
    \centering
        \includegraphics{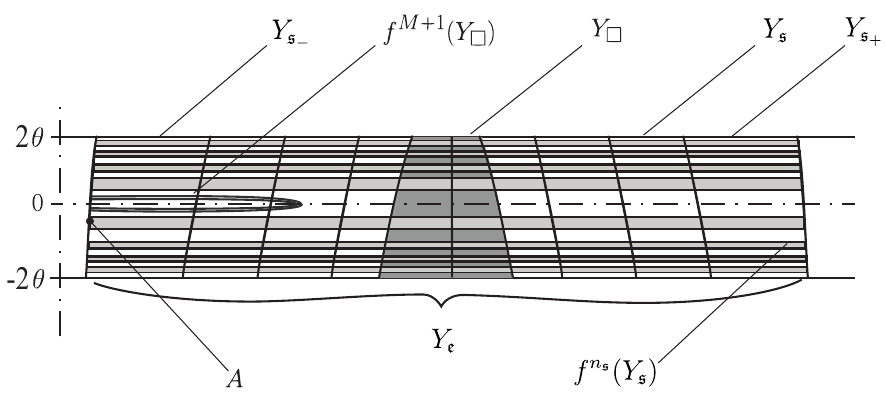}
    \caption{Geometric model for some parameters of the Hénon map. }
\label{geometricmodele}
\end{figure}

Let $\partial^uY_\sa:= Y_\sa\cap \{y=\pm 2\theta\}$ and let $\partial^s Y_\sa :=  Y_\sa \cap \cup_{\pm} W^s((x_\pm,0)  ; f)$. 

We notice that by (\ref{lamination}), it holds that $f^{n_\sa }( \partial^s Y_\sa) \subset  \partial^s Y_\se$, as depicted by Fig. \ref{geometricmodele}. \\

\begin{defi}[Simple pieces]
For every flat stretched curve $S$, for every $\ss\in \sY_0$, 
let $S_\ss:=S\cap Y_\ss$ and let $\ss(S):= \{ S_\ss,n_\ss\}$. Note that $S^\ss=f^{n_\ss}(S_\ss)$. \label{henonsimple}
\end{defi}

In \cite{berhen} Expl. 2.2, we show:
\begin{exem}[Simple pieces]
For any flat stretched curve $S$, each pair $\sa(S):= \big\{Y_{\sa}\cap S, n_\sa\big\}$, for $\sa\in \sY_0$ is a hyperbolic puzzle piece called \emph{simple}.
\end{exem}
\begin{exem}[Curves $S^{{t\! t}}$ and $(S^{t})_{t\in T_0^{\mathbb Z^-}}$]\label{T0}
The map $S\mapsto S^{s_-}$ from the space of flat stretched curves into itself is well defined and $C^1$-contracting in the space of flat stretched curves. 
With ${t\! t}:=(s_-)_{i\le 0}\in \sY_0^{\mathbb Z^-}$, we denote by $S^{t\!t}$ its fixed point. It is a half local unstable manifold of $A$. 
 We have  also $S^{{t\! t}} =\{ z_0\in Y_\se:\; \exists (z_i)_{i\le -1}\in Y_{s_-}^{\mathbb Z^-}, \; z_{i+1}= f^2(z_i)\}$.

Similarly for $t= (\sa_i)_{i\le-1}\in \sY_0^{\mathbb Z^-}$, the set:
\[S^{t} =\{ z_0\in Y_\se:\; \exists (z_i)_{i\le -1}\in \prod_i Y_{\sa_i}, \; z_{i+1}= f^{n_{\sa_i}}(z_i)\},\]
is a flat stretched curve. We put $T_0:=\sY_0^{\mathbb Z^-}$. They define the family of curves $(S^t)_{t\in T_0}$. 
 \end{exem}
 
Similarly, we can define the operation $\star$ on puzzle pieces of flat stretched curves.\index{$T_0$}
\subsubsection{Operation $\star $ on puzzle pieces}
\begin{defi}[{Operation $\star$ on puzzle pieces}]
Let $\sa(S):= \{S_\sa, n_\sa\}$ and $\sb(S^\sa)= \{S^\sa_\sb, n_\sb\}$ be two puzzle pieces of $S$ and $S^\sa:= f^{n_\sa}(S_\sa)$ respectively. We define the puzzle piece of $S$: 
\[\sa\star\sb(S):=\{f^{-n_\sa}(S^\sa_\sb)\cap S_\sa, n_\sa+n_\sb\}.\]
Indeed the map $f^{n_{\sa\star \sb}}|S_{\sa\star \sb}$ is a bijection onto $S^{\sa\star \sb}:= f^{n_{\sa\star \sb}}(S_{\sa\star \sb})$. 
\end{defi}
The pair of puzzle pieces $(\sa(S),\sb(S^\sa))$ is called \emph{suitable}. More generally, a sequence $(\sa^i(S^i))_{1\le i< k}$, for $k\in \N\cup\{\infty\}$  is called suitable if the pair of any two consecutive puzzle pieces is  suitable. 
We can now generalize condition $(\star)$ of Yoccoz' strong regularity definition. \index{Suitable}
\begin{defi}[Common sequence]
For $N\in [1,\infty]$, a \emph{common sequence} \index{Common sequence} $\sc$ is a suitable  sequence of  hyperbolic puzzle pieces $\sc := (\sa_i(S^i ))_{i= 1}^{N-1}$ from $S^1:=S^{t\!t}$ which satisfies the following properties:
\begin{equation}\tag{$\star$}
\sum_{j\le i\; \sa_j\notin \sY_0} n_{\sa_j} \le e^{-\sqrt M} \sum_{j\le i-1} n_{\sa_j},\quad i< N.
 \end{equation}
Moreover every pieces $\sa_i(S^i )$ is either simple or included in $Y_\square$, and for every $i\ge 0$, 
\begin{equation}\tag{$\diamondsuit $}
\sa_{i+1}\star\cdots\star \sa_{i+\aleph(i)}\notin \{  s_-^{\star \aleph(i)}, s_+^{\star \aleph(i)}\}
\end{equation}
\end{defi}

The product $\sc_i := \sa_1\star \sa_2\star \cdots \star\sa_{i-1}\star \sa_i$ 
is called a \emph{common product of depth $i$} \index{Common product of depth $i$}
 and it defines a pair $\sc_i(S^{t\!t})=:\{ S^{t\!t}_{\sc_i},n_{\sc_i}\}$ called a \emph{common piece}. \index{Common piece}

A \emph{common piece of depth $0$} is the pair equal to $\sc_0(S^{t\!t}):=\{ S^{t\!t}, 0\}= \se(S^{t\!t})$.

Not all the puzzle pieces have their endpoints with a nice local stable manifold. Nevertheless it is the case for the common piece:
\begin{prop}[\cite{berhen} Prop. 3.6 ]
Each endpoint $z_\pm$ of $S^{t\!t}_{\sc_i}$ has a local stable manifold $W^s_{loc} (z_\pm; f)$ which stretches across $Y_\se$ and is $\sqrt b $-$C^2$-close to an arc of curve of the form:
\[\{(x,y)\colon x^2+y = cst\}.\]
With the lines $\{y=\pm2 \theta\}$, this bounds a box of $Y_\se$ denoted by $Y_{\sc_i}$. Moreover, for every $z\in Y_{\sc_i}$, the vector $Df^{n_{\sc_i}} (1,0)$ is $\theta$-close to be horizontal and of norm at least $e^{c^- n_{\sc_i}}$, with:
\[c^-=c-\frac 1{\sqrt M}=\frac {\log 2}2-\frac 1{\sqrt M}\; .\]\index{$c^-$}
\end{prop}
We put $\partial^s Y_{ \sc_i}=  \cup_\pm W^s_{loc} (z_\pm; f)\cap Y_\se$ and $\partial^u Y_{ \sc_i} = \partial^u Y_\se \cap Y_{ \sc_i}$.

By the above Proposition, the width of $Y_{\sc_i}$ is smaller than $2e^{-c^- n_{\sc_i}}$ times the width of $Y_\se$ and so, if $N=\infty$, the following decreasing intersection:
\[W^s_{\sc} := \cap_{i\ge 0} Y_{ \sc_i}\;.\]
is a $C^{1+Lip}$-curve called \emph{common stable manifold}, which is $\sqrt b$-$C^{1+Lip}$-close to  an arc of a curve of the form:
\[\{(x,y)\colon x^2+y = cst\}.\]


\subsubsection{Tangency condition}

Every flat stretched curve $S$ intersects $Y_\square$ at a segment $S_\square=S\cap Y_\square$. This segment is sent by $f^{M+1}$ to
a curve $S^\square$ which is $C^2$-close to a folded curve $ \{(-Cst\cdot 4^M t^2+f^{M}_a(a), 0): t\in \R\}\cap Y_\se$.

The definition of strong regularity for Hénon-like maps supposes the existence of a family of curves $(S^t)_{t\in T^*}$ so that for each $t\in  T^*$, there exists a common sequence of puzzle pieces $\sc^t$ so that\index{$T^*$}

\paragraph{$(SR_1)$} $S^{t\square}= f^{M+1} (S^t_\square)$ is tangent to $W^s_{\sc^t}$.

As in dimension $1$, conditions are given on the puzzle pieces involved in the common sequences. In this two dimensional case, conditions are moreover given on the flat and stretched curves forming   $(S^t)_{t\in T^*}$.

\subsubsection{Parabolic operations from tangencies}
As in dimension $1$, if a flat stretched curve $S$ satisfies that $S^\square $ is tangent to a common stable manifold $W^s_{\sc^t}$, then we can define parabolic pieces.

Indeed, then for every $i$, 
$(f^{M+1}|S_\square)^{-1}cl(Y_{\sc_i}\setminus Y_{\sc_{i+1}})$ consists of zero or two segments. 

We denote by $S_{\square_-(\sc_i -\sc_{i+1})}$ the left hand side segment and by $S_{\square_+(\sc_i -\sc_{i+1})}$ the right hand side segment. 

Let $\sp$ be a symbol in $\{\square_+(\sc_i -\sc_{i+1}),\square_-(\sc_i -\sc_{i+1})\}$.
\begin{defi}[Symbolic identification]
The symbols $\square_+(\sc_i -\sc_{i+1})$ and $\square_-(\sc_i -\sc_{i+1})$ depend only on $Y_{\sc_i}$ and $Y_{\sc_{i+1}}$.

In particular if for $t\not=t'$ it holds $Y_{\sc_i^t}=Y_{\sc_i^{t'}}$ and 
$Y_{\sc_{i+1}^t}=Y_{\sc_{i+1}^{t'}}$, then the following identifications are done $\square_+(\sc^t_i -\sc^t_{i+1})=\square_+(\sc^{t'}_i -\sc^{t'}_{i+1})$ and $\square_-(\sc^t_i -\sc^t_{i+1})=\square_-(\sc^{t'}_i -\sc^{t'}_{i+1})$.
\end{defi}

With $n_{\sp}=M+1+n_{\sc_i}$, the pair $\sp(S):=\{S_{\sp}, n_{\sp}\}$ is called a \emph{parabolic piece}.

This pair $\sp(S)$ cannot be a puzzle piece since the curve $f^{n_\sp}(S_\sp)$ is not stretched (like in the one dimensional model).
 
However the curve $f^{n_\sp}(S_\sp)$ can be extended to a flat stretched curve $S^\sp$ by an algorithm given by Prop. 4.8 and 5.1 in \cite{berhen}. In particular $S^\sp\supsetneq f^{n\sp} (S_{\sp})$.

\begin{defi}[Set of symbols $\sA$]
Let $f$ which satisfies $(SR_1)$ with the flat stretched curves $(S^t)_{t\in T^*}$ and the common sequences $(\sc^t)_{t\in   T^*}$.

Let 
\[\sA:= \sY_0\cup \bigcup_{t\in  T^*}\bigcup_{i\ge 0} \{\square_+ (\sc^t_i -\sc^t_{i+1}), \square_- (\sc^t_i -\sc^t_{i+1}) \}\;.\]
\end{defi}
The above union over $t\in T^*$ is not disjoint by the above remark.
As they are countably many puzzle pieces of $S^{t\! t}$, they are countably many common pieces $\sc_i$ and boxes $Y_{\sc_i}$. Thus $\sA$ is countable.
\begin{prop}[Prop. 1.7 and 4.1 of \cite{berhen}]\label{hypersA}
For every $t\in T^*$, every parabolic or simple piece  $\sa(S^t)$,  with $\sa\in \sA$, is hyperbolic.
\end{prop}

\begin{defi}[Suitable chain]
Let $(S^{i})_{i=1}^n$ be a family of flat stretched curves and let $(W^s_{\sc^i})_{i=1}^n$ be a family of  common stable manifolds so that $S^{i\square }$ is tangent to  $W^s_{\sc^i}$.
 
For each $i$ let $\sp_i$ be a symbol either in $\sY_0$ , either parabolic obtained from $\sc^i$ (that is of the form $\square_\pm (\sc_j^i -\sc_{j+1}^i)$). 

The chain of symbols $(\sp_i)_{i=1}^n$ is called suitable from $S^{1}$ if:
\begin{enumerate}
\item $S^{{i+1}}=S^{{i}\cdot \sp_i}$ for every $i<n$,
\item The segment of the pair $\sp_1(S^{1})\star \cdots \star \sp_n(S^{n})$ is not trivial (it has cardinality $>1$).
\end{enumerate}
The chain of symbols is complete if $\sp_n$ belongs to $\sY_0$, and incomplete otherwise. 
The chain of symbols $(\sp_i)_i$ is \emph{prime} if $\sp_i\notin \sY_0$ for $i<n$.\index{Prime} \index{Complete}
\end{defi}
A corollary of Proposition \ref{hypersA} is:
\begin{coro}
If $(\sp_i)_{i=1}^n$ is suitable, then $\sp_1(S^{t_1})\star \cdots \star \sp_n(S^{t_n})$ is a hyperbolic piece of $S^{t_1}$. 
\end{coro}

\subsubsection{Puzzle algebra and strong regularity definition}

In Example \ref{T0}, we defined for every $t\in T_0:= \sY_0^{\mathbb Z^-}$ a flat stretched curve $S^t$. 

In \cite{berhen}, for a set of parameters $a\in P_B$ of Lebesgue measure positive,  we show the existence of a family of curves $(S^t)_{t\in T^*}$ and a family of common sequences $C= (\sc^t)_{t\in T^*}$ which are linked in the following way by the tangency condition and parabolic/simple operations. 

\begin{enumerate}[$(SR_1)$]
\item $S^{t\square}= f^{M+1} (S^t\cap Y_\square)$ is tangent to $W^s_{\sc^t}$.

\[ \text{Put}\quad \sA:= \sY_0\cup \bigcup_{t\in  T^*,\quad i\ge 0} \{\square_+ (\sc^t_i -\sc^t_{i+1}), \square_- (\sc^t_i -\sc^t_{i+1}) \}\; \text{modulo the symbolic identification}.\]
\item For every $t\in T^*$, every puzzle piece $\sa_i(S^i)$ involved in $\sc^t=(\sa_i(S^i))_i$ is given by suitable, complete and prime chain of symbols $\underline \sa_i$ in $\sA^{(\N)}$.  
\item The set $T^*$ is the subset of $\sA^{\mathbb Z^-}$ defined by 
$$T^*= \{t\cdot \sp_{-n} \cdots \sp_{-1}:\; t\in T_0,\;  n\ge 0,  \; (\sp_i)_{1\le i\le n} \in \sA^n\text{ is a suitable chain from }S^{t}\}.$$ 
For $t^*= t\cdot \sp_{-n} \cdots \sp_{-1}\in T^*$, we put  $S^{t^*}= (\cdots (S^{t})^{ \sp_{-n}} \cdots )^{\sp_{-1}}$. 

\end{enumerate}
\begin{rema}
In ($SR_2$) the element $t\cdot \sp_{-n} \cdots \sp_{-1}$ is equal to the presequence $(\sa_i)_{i\le -1}\in \sA^{\mathbb Z^-}$ defined by
 $\sa_{-i}:=\sp_{-i}$ if $1\le i\le n$, and, with $t=(\ss_i)_{i\le -1}\in T^0$,
  $\sa_{-i}:\ss_{-i+n}$ if   $i\ge n+1$.
\end{rema}

\begin{defi}
A map $f$ so that there exists a family of flat stretched curves $(S^t)_{t\in T^*}$ and a family of common sequences $(\sc^t)_{t\in T^*}$ satisfying  $(SR_1-SR_2-SR_3)$ 
 is called \emph{strongly regular}. 
 \end{defi}
\begin{defi}\index{$\sG$}
Let $\sG$ be the set of finite segments of sequences in $T^*\subset Y_0^{\mathbb Z^-}\times \sA^{(\mathbb N)}$. It has a structure of pseudo-semi-group for two operations: $\star$ and $\square$. The triplet $(\sG, \star, \square)$ is called a \emph{Puzzle Algebra}.
\footnote{In \cite{berhen}, the presentation of strong regularity is different: The  set $T^*$ is presented as the disjoint union of the sets $T$ and $T^\square$, formed by the presequences $t\star \sp_{-n}\star \cdots\star \sp_{-1}\in T^*$ which finish by  respectively a simple piece or a parabolic piece.
This splits the family of curves $(S^t)_{t\in T*}$ into two subfamilies  $\Sigma= (S^t)_{t\in T}$ and $\Sigma^\square= (S^t)_{t\in T^\square}$. 
Furthermore, the set of prime puzzle pieces of a curve $S^t$, with $t\in T$, is denoted therein by $\mathcal Y(t)$. We define also $\mathcal Y:= \sqcup_{t\in T} \mathcal Y(t)$. The quadruplet $(\Sigma,\Sigma^\square,C,\mathcal Y)$ is called a puzzle algebra. This is equivalent to the above definition.
}  

\end{defi}



The main result of \cite{berhen} (Theorem 0.1) is the following:
\begin{theo} Every strongly regular map leaves invariant an ergodic, physical SRB measure supported by 
a non uniformly hyperbolic attractor. Moreover, strongly regular maps are abundant in the following meaning:

For every $\epsilon>0$, there exists $b>0$, such that for every $B$ of $C^2$ norm less than $b$, there exist $\eta>0$ and a subset $\Pi_B\subset [-2,-2+\eta]$ with $\frac{\leb \Pi_B}{\leb [-2,-2+\eta]}\ge 1-\epsilon$  such that for every $a\in \Pi_B$, the map $f_{a\, B}$ is strongly regular.
\end{theo}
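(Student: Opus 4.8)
The statement has two independent halves: (1) a given strongly regular $f$ carries an SRB measure supported on a non-uniformly hyperbolic attractor; (2) such maps are abundant in the H\'enon family near $a=-2$. Part (1) is extracted from the geometric data of the puzzle algebra that strong regularity provides; part (2) requires building that puzzle algebra by induction on the order of the pieces, together with a Benedicks--Carleson / Mora--Viana style parameter-exclusion argument.

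\textbf{Proof of (1).} Given a puzzle algebra $(\Sigma,\mathcal Y,\Sigma^\square,C)$ for $f$, the flat stretched curves of $\Sigma$ serve as pieces of unstable manifolds: on each puzzle piece $\alpha$ of $S\in\Sigma$ the $h$-times property gives $\|T_zf^{n_\alpha}(w)\|\ge e^{\frac c3(n_\alpha-l)}\|Tf^l(w)\|$ along $S$, while the determinant bound $b$ forces contraction at rate $\le e^{c^+ n_\alpha}b^{n_\alpha}$ in the complementary direction. Iterating one flat stretched curve through the graph transforms $\mathbf a$, $a\in\sA$ --- which are exponentially contracting by Proposition \ref{contract} and its analogue for the simple transforms $\bf s$ --- produces a limit lamination, namely the set $\tilde{\mathcal R}$ transversally foliated by the common stable manifolds $W^s_c$ of Claim \ref{holonomie2}, which are genuine local stable manifolds. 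This furnishes a hyperbolic product structure on a large subset of the maximal invariant set, over which the standard construction (as in Benedicks--Young) applies: the normalised push-forwards of Lebesgue measure along an unstable curve converge to an $f$-invariant SRB measure $\mu$, and the admissibility constraints on return depths (conditions 1--4 of common sequences) give the required control of return statistics and distortion. The cone field $\chi$ is $Df$-invariant with exponential expansion along a positive-density subsequence of returns (by $\hat G_4$, $\hat G_5$ and the $h$-times inequalities), so $\mu$ has a positive Lyapunov exponent a.e.; the remaining exponent is $\le\log b-\lambda^+<0$ from the determinant bound, so the attractor is non-uniformly hyperbolic. Uniqueness of the SRB measure and the description of its basin then follow as in Benedicks--Young and Benedicks--Viana \cite{BY}, \cite{BV2}.

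\textbf{Proof of (2).} For $M$ large and then $b$ small, the geometric model $\hat G_0$--$\hat G_7$ (Proposition 1.7 of \cite{berhen}) holds for every $a$ in a neighbourhood of $-2$, furnishing the simple alphabet $\sY_0$ and the simple puzzle pseudo-group $(\Sigma_0,\mathcal Y_0)$; this is the start-up. One then extends the puzzle algebra inductively over pieces of increasing order. The sole obstruction is the behaviour of a flat stretched curve $S$ after it enters the critical box $Y_\square$: the curve $S^\square=f^{M+1}(S_\square)$ is $\sqrt b$-$C^2$-close to the parabola-like arc through $f^{M+1}(0)$, and to apply the parabolic operations $\mathbf\Delta$ one needs $S^\square$ to meet a common stable manifold $W^s_c$ at a tangency lying well inside some $Y_{\underline c_i}\setminus Y_{\underline c_{i+1}}$ and far from the boundary curves $\partial^s Y_{\underline c_j}$ --- this is exactly what common-sequence conditions 3 and 4 (the $\aleph$-bound, which also forbids shadowing $W^s(A)$ too long through runs of $s_-$) encode, a slow-recurrence Collet--Eckmann-type condition on the orbit of the critical value. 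At each stage the set of parameters $a$ violating it is estimated from: (i) exponential growth $\|Df_{a,B}^k(\text{critical value})\|\ge e^{ck}$; (ii) bounded distortion of $f_{a,B}^k$ along the relevant curves, obtained by transferring one-dimensional distortion bounds through the exponential contraction of the graph transforms; (iii) transversality of the parameter dependence, i.e. $\partial_a(f_{a,B}^k(\text{critical value}))$ comparable to the phase-space derivative, so that each deleted parameter interval has length comparable to the measure of the associated bad phase-space set. Summing the deleted sets over return times and recurrence depths, and taking $M$ large (hence $b$ small, $a$ near $-2$), makes the total excluded measure at most $\epsilon\eta$, leaving $P_B$ with $\leb\,P_B\ge\eta(1-\epsilon)$; for $a\in P_B$ the construction closes up to a puzzle algebra, so $f_{a,B}$ is strongly regular.

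\textbf{Main obstacle.} The difficulty lies entirely in part (2), and within it in the parameter exclusion: one must propagate \emph{simultaneously} the combinatorial admissibility (conditions 1--4, in particular keeping runs of $s_-$ shorter than $\aleph(n)$) and the analytic estimates --- Collet--Eckmann growth, distortion, parameter transversality --- through an induction with unbounded return times $n_\alpha$. Controlling distortion over these long, variable binding periods, and establishing the parameter-to-phase transversality uniformly in $B$, is where essentially all the work sits; once the puzzle algebra is available, part (1) is comparatively routine.
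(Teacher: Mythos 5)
This theorem is not proved in the present paper: it is stated verbatim as a citation of Theorem 0.1 of \cite{berhen}, and every ingredient the present paper actually uses (the geometric model $\hat G_0$--$\hat G_7$, the puzzle algebra, the common sequences, the parabolic operations) is likewise imported from \cite{berhen} rather than re-established here. There is therefore no in-paper proof to compare your attempt against; the only honest comparison is with the strategy of \cite{berhen} as it is summarised in \textsection\ref{rappeldesdef}.

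Read in that light, your outline points at the right circle of ideas --- Benedicks--Carleson/Mora--Viana parameter exclusion for abundance, and a Benedicks--Young-type construction of an SRB measure from a hyperbolic product structure --- but it is a strategy sketch rather than a proof. Both halves defer essentially all the substance to ``as in Benedicks--Young'', ``standard construction'', ``as in Mora--Viana''. In particular, for part (2) you never actually define what must be produced, namely a puzzle algebra $(\Sigma,\mathcal Y,\Sigma^\square,C)$ satisfying $(SR_0)$--$(SR_4)$ with the countable alphabet $\sA$ closed under the $\star$-product and the parabolic operations of \textsection\ref{partie parabolic}; the induction you gesture at (``extend the puzzle algebra over pieces of increasing order'') is exactly where the work of \cite{berhen} lives, and it is quite different in its bookkeeping from the binding/recurrence induction of Benedicks--Carleson (regular sequences, common pieces, the $\aleph$-bound, right divisibility, suitability of chains) rather than a re-run of it. Your part (1) has the additional issue that what you describe produces, at best, the existence of \emph{some} SRB measure on a subset with product structure; to obtain the ergodicity/uniqueness implicitly claimed one needs, as in \cite{BY} and \cite{BV2}, a mixing Young tower with integrable return times, and the verification that the common-sequence admissibility gives integrability is not automatic. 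So the proposal is not wrong in direction, but as written it would not by itself establish the theorem; the genuine content is in the cited paper and is not reproduced here.
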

\begin{rema}\label{violent} To fix the idea, we will suppose  the following very rough inequalities:
 $M\ge 1000$ and $-\log b\le \exp\, \exp M$.  They are sufficient for the new analytic conditions given by this work. 
 \end{rema}
 
\section{Regular sets of strongly regular dynamics}
\label{defregularsection}

\subsection{Regular set for strongly regular quadratic maps}
Let $P$ be a strongly regular quadratic map satisfying $(SR_1)$-$(SR_2)$. We recall that:
\[\sA:=  \sY_0\sqcup \{\square_+(\sc_i-\sc_{i+1}),\square_-(\sc_i-\sc_{i+1})\}.\]


We recall that $\sa\in \sA$ defines a simple puzzle piece (i.e. belongs to $\sY_0$) if $n_{\sa}\le M$ and defines a parabolic piece if $n_{\sa}\ge M+1$.

Like in the H\'enon case, a chain $(\sa_i)_{i=1}^k\in \sA^k$ is \emph{suitable} iff $\R_{\sa_1\star \cdots \star \sa_k}$ is a non trivial segment. It is \emph{complete} if $\sa_k$ is belongs to $\sY_0$. We recall that it is prime if $\sa_i$ does not belong to $\sY_0$ for $i<k$.

The following will be useful for the Markov partition that we will define:
\begin{prop}\label{primecomple=puzzle}
For every suitable, complete chain $(\sp_i)_{i=1}^k\in \sA^k$, the product $\underline \sp:=\sp_1\star \cdots \star \sp_k$ is a puzzle piece.  
\end{prop}
\begin{proof}
We proceed by induction on $k$. For $k=1$,  $\underline \sp=\sp_1\in \sY_0$ which is indeed a puzzle piece.

Let $k\ge 2$ and assume by induction that $\sp_2\star \cdots \star \sp_k$ is a puzzle piece.  If  $\sp_1\in \sY_0$, then $\underline \sp$ is the product of two puzzle pieces $\sp_1$ and  $\sp_2\star \cdots \star \sp_k$, thus $\underline \sp$ is a puzzle piece. If  $\sp_1$ is of the form $\square_\pm(\sc_i-\sc_i\star \sa_{i+1})$, 
 with $\sa_{i+1}$ a prime puzzle piece. Then $P^{n_{\sp_1}}(\R_{\sp_1})$ is equal to one component of 
 $cl(\R_\se \setminus  \R_{\sa_{i+1}})$ and intersects at a non empty subset $int\; \R_{\sp_2\star \cdots \star \sp_k}$.  As the puzzle pieces are nested or disjoint, it comes that $\R_{\sp_2\star \cdots \star \sp_k}$ is included in  $cl(\R_\se \setminus \R_{\sa_{i+1}})$ and so in  $P^{n_{\sp_1}}(\mathbb R_{\sp_1})$. Consequently $P^{n_{\underline \sp}}(\mathbb R_{\underline \sp})$ is equal to $\R_\se$, and so $\underline \sp$ is a puzzle piece.
\end{proof}

It holds that for $\sa\not= \sa'\in \sA$, the intersection of the segments $\R_\sa$ and $\R_{\sa'}$ consists of at most one point which is in $\cup_{k\ge 0} P^{-k}(A_0)$. Hence the alphabet $\sA$ defines a cover $\{\R_{\sa}:\; \sa \in \sA\}$ of $\R_\se\setminus \{0\}$ which is a partition modulo the preimages of $A_0$.

Similarly, the suitable chain $\sg=(\sa_i)_{i=1}^N  \in \sA^N$ of length $N$, we associate the following segment:
$$\R_\sg := \R_{\sa_1\star \cdots \star \sa_N }.$$
The set of such segments covers $\R_\se\setminus \cup_{k\ge 1}P^{-k}(\{0\})$. As a consequence of the $h$-times property given by Proposition \ref{Proph} and the Collet-Eckmann condition given by Corollary \ref{CEP}, it holds:
\begin{prop}\label{lyapinvmeasure} Every invariant probability measure $\nu$ has a Lyapunov exponent at least $c/3$.
\end{prop}

This partition enables us to define combinatorially a certain Pesin set:
\begin{defi} \label{Rxi2}\index{Regular sequence}A suitable sequence of symbols $\sg=(\sa_i)_{i=1}^N\in \sA^N$  is \emph{regular}
 if the following inequality holds for every $i\le N$:  \index{$\dag$}
\begin{equation}\tag{\dag}
n_{\sa_{i}}\le M+\Xi\sum_{1\le j<i} n_{\sa_j}\;,\end{equation}
with $\Xi:= e^{\sqrt M}$.\index{$\Xi$} \index{Regular sequence}\index{$\Xi$}
\end{defi}

We recall that given two different suitable chains $\sg,\sg'\in \sA^n$ of the same length $n$ , it holds that $\R_\sg\cap \R_{\sg'}$ is empty or equal to a preimage of $A_0$.

\begin{defi}[Regular point]
For every $z\in \R_\se\setminus \cup_{k\ge 1}P^{-k}(\{A_0\})$, let $0\le p\le \infty$ and $(\sa_i(z))_{0\le i< p}$ be the maximal regular chain of symbols so that  $z$ belongs to $\R_{\sa_0(z)\star \cdots \star \sa_i(z)}$ for every $i< p$. The point $z$ is said \emph{$p$-regular}. If $p=\infty$, the point $z$ is called  \emph{regular}.\index{Regular point}

\end{defi}
We notice that the chain is empty when $p=0$. This occurs  iff $z$ belongs to $\R_\square$. Otherwise, $\sa_0(z)$ belongs to $\sY_0$.
\begin{rema} 
By definition, whenever $p<\infty$, a $p$-regular point is not $p-1$-regular nor $p+1$-regular.
\end{rema}

Clearly, if $z\in \R_\se\setminus \cup_{k\ge 1}P^{-k}(\{A_0\})$ is $p$ regular then, with $m:=n_{\sa_0(z)}+ \cdots +n_{ \sa_{p-1}(z)}$, the point  $P^{m}(z)$ is equal to $0$ or it belongs to a certain $\R_{\square_\pm(\sc_i-\sc_{i+1})}$ which satisfies:
\[n_{\square_\pm(\sc_i-\sc_{i+1})} >M+\Xi\sum_{0\le j<p} n_{\sa_i(z)}\; .\]
Then we put $\sa_p(z)=\square$, $z':=P^{m+M+1}(z)$ and we define  $\sa_{i+p+1}(z)= \sa_i(z')$ for every $i\ge 1$. 

Such a recursion defines a (full) sequence $\underline \sa(z)=(\sa_i(z))_{1\le i<\infty}$ for every $z\in \R_\se\setminus \cup_{n\ge 0} P^{-n}(A_0)$
\begin{defi}
A point $z$ is \emph{infinitely irregular} if $\sa_i(z)=\square$ for infinitely many integers $i$.
Otherwise $z$ is \emph{eventually regular}. 
\end{defi}
Indeed if $z$ is \emph{eventually regular}, with $N$ minimal so that $\sa_i(z)\not =\square$ for every $i\ge N+1$, with the convention $n_\square =M+1$, it holds that $P^{n_{\sa_1(z)}+\cdots +n_{\sa_N(z)}}(z)$ is regular.  
 
By looking at the Lyapunov exponent of the invariant probability measures, we will prove the following in Appendix \ref{prinvLyapExp}: 
\begin{prop}\label{evenregudim1}
For every invariant probability measure $\nu$ with support off $\{A_0,A_0'\}$, $\nu$-almost every point $z\in \R_\se$ is eventually regular or satisfies that 
$\sa_i(z)=\square$ for all $i$ large enough.
\end{prop}

\subsection{Regular set for strongly regular Hénon like maps}
For a strongly regular Hénon like map $f$ with structure $\{\sG, \star ,\square\}$, we are going to encode the dynamics thanks to a family of partitions $(\mathcal P(t))_{t\in   T^*}$. This encoding will define the regular and irregular sets.
  
By $(SR_2)$, every puzzle piece involved in the common sequences of $\sc^t$, $t\in T^*$ is given by a suitable, complete chain from $S^{t\!t}$. 

Conversely, a straight forward generalization of Proposition \ref{primecomple=puzzle} (it is shown as Lemma 7.12 in \cite{berhen}), states the following:
\begin{prop}\label{completearepuzzlepiece}
For every suitable, complete chain $(\sp_i)_{i=1}^k\in \sA^k $ from $S^{t\! t}$, the pair $\underline \sp(S^{t\!t}):=\sp_1\star \cdots \star \sp_k(S^{t\! t})$ is a puzzle piece of $S^{t\! t}$.  
\end{prop}

\subsubsection{Partition $\mathcal P(t)$ associated to $t\in    T^*$.} \label{partietiongeo}
Let $t\in    T^*$ and let $\sc=\sc^t$ be its associated common sequence  by $(SR_1)$. 
We recall that $(Y_{\sc_i})_i$ is a nested sequence of boxes the intersection of which is the curve $W^s_{\sc}$.

Therefore $\{Y_{\sc_{i}}\setminus Y_{\sc_{i+1}};\; i\ge 0\}\cup \{f^{-M-1}(W^s_{\sc})\cap Y_\square\}$ is a partition of $Y_\se$.   Put:
\[Y_{\square (\sc_i-\sc_{i+1})}=cl\Big(f^{-M-1} (Y_{\sc_i}\setminus Y_{\sc_{i+1}})\Big)\cap Y_\square,\quad Y_{\square c}:= f^{-M-1}(W^s_{\sc})\cap Y_\square.\]
\[\partial^s Y_{\square (\sc_i-\sc_{i+1})}=f^{-M-1} (\partial^s Y_{\sc_i}\cup \partial^s Y_{\sc_{i+1}})\cap Y_\square\; . \]

These sets have a very tame geometry:

\begin{prop}\label{prop des common extension}
The boundary of $Y_{\square (\sc_i-\sc_{i+1})}$  is formed 
by segments of the lines $\{y=\pm 2\theta\}$ and by arcs of curves $\sqrt{b}$-$C^2$ close to arcs of  parabolas of the form $\{(x,y): P(x)+y= cst\}$.

For every $z\in Y_{\square (\sc_i-\sc_{i+1})}$, every $n\ge 0$, the following inequality holds:
\begin{equation}\tag{$\mathcal{PCE}^{n_{ \sc_i}+M+1}$}
\|D_zf^{k}(0,1)\|\ge e^{-M c^{+} k},\quad \forall k\le n_{\square_\pm(\sc_i-\sc_{i+1})}:=M+1+n_{\sc_i}.\end{equation}
Moreover $D_zf^{n_{\sc_i}+M+1}(0,1)=:(u_x,u_y)$ satisfies  $|u_y|\le \theta |u_x|$ and  
\begin{equation}\tag{$\mathcal{CE}^{n_{ \sc_i}+M+1}$}\label{CEacertainmoment} \|D_zf^{n_{\sc_i}+M+1}(0,1)\|\ge e^{ c^{-}(n_{\sc_i}+M+1)}.\end{equation}
\end{prop}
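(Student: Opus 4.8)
The plan is to split the $n_{\square_\pm(c_i-c_{i+1})}=M+1+n_{c_i}$ iterations into two blocks: the first $M+1$ iterations, which realize the parabolic fold through $Y_\square$ and whose effect on geometry and derivatives is governed by the closeness of $f^{M+1}$ to $f_{a,0}^{M+1}$ (as in \textsection\ref{partie parabolic}) together with $\hat G_5$; and the remaining $n_{c_i}$ iterations, which take place inside the common box $Y_{\underline c_i}$, where Proposition~\ref{assemblage facile} is available. One first records that $Y_{\square(c_i-c_{i+1})}\subset Y_\square$ and $f^{M+1}\big(Y_{\square(c_i-c_{i+1})}\big)\subset cl\big(Y_{\underline c_i}\setminus Y_{\underline c_{i+1}}\big)\subset Y_{\underline c_i}$, so that the two blocks sit in the expected places.

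For the boundary description I would start from Claim~\ref{holonomie2}, applied to $c$ with depths $i$ and $i+1$: the four curves forming $\partial^s Y_{\underline c_i}$ and $\partial^s Y_{\underline c_{i+1}}$ are $\sqrt b$-$C^2$-close to arcs of parabolas $\{f_a(x)+y=cst\}$ with end points on $\{y=\pm2\theta\}$, while the corresponding $\partial^u$ parts lie on $\{y=\pm2\theta\}$; hence $\partial\big(Y_{\underline c_i}\setminus Y_{\underline c_{i+1}}\big)$ is assembled from such pieces. On $Y_\square$ the map $f^{M+1}$ is $e^{3(M+1)c^+}b$-$C^2$-close to $f_{a,0}^{M+1}(x,y)=(f_a^M(x^2+a+y),0)$, and a direct computation shows that $f_{a,0}^{-(M+1)}$ sends a curve $\{f_a(x)+y=cst\}$ to a union of curves of the same form (its image is always contained in $\mathbb R\times\{0\}$, so the preimage is cut out by $f_a^M(f_a(x)+y)=\pm\sqrt{cst-a}$, i.e.\ by finitely many equations $f_a(x)+y=u_j$). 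Since $e^{3(M+1)c^+}b\le\sqrt b$ for $b$ small with respect to $M$ (Remark~\ref{violent}), the preimages $f^{-M-1}\big(\partial^s Y_{\underline c_i}\big)$ and $f^{-M-1}\big(\partial^s Y_{\underline c_{i+1}}\big)$ meet $Y_\square$ along arcs $\sqrt b$-$C^2$-close to parabolas $\{f_a(x)+y=cst\}$; together with the parts of the boundary inherited from $\partial Y_\square\subset\{y=\pm2\theta\}$, this yields the first assertion.

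Next I would set $v:=T_zf^{M+1}(0,1)$ for $z\in Y_{\square(c_i-c_{i+1})}$. Since $Df_{a,0}(x,y)(0,1)=(1,0)$ and $f_{a,0}$ preserves $\mathbb R\times\{0\}$, the vector $Df_{a,0}^{M+1}(z)(0,1)$ is exactly horizontal of norm $\prod_{j=0}^{M-1}|2f_a^j(a)|\ge 2^M$ (the orbit $a,f_a(a),\dots$ avoids $[A_0,-A_0]$ by minimality of $M$); by $C^1$-closeness $v$ then makes an angle far below $\theta$ with $(1,0)$, so $v\in\chi(f^{M+1}(z))$, and $\|v\|\ge e^{2Mc}$ by $\hat G_5$. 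For $1\le k\le M+1$ the orbit of $z$ shadows that of the critical point for these $M+1$ steps (as in the construction of the model), which after the first iterate lies in the expanding region; since $Df_{a,0}(z)(0,1)=(1,0)$, one gets $\|T_zf^k(0,1)\|\ge e^{-Mc^+k}$ with enormous room. For $M+1<k\le M+1+n_{c_i}$, writing $k=M+1+k'$ and applying the second inequality of Proposition~\ref{assemblage facile} at $x=f^{M+1}(z)\in Y_{\underline c_i}$ to $u=v$:
\[\|T_zf^k(0,1)\|=\|T_{f^{M+1}(z)}f^{k'}(v)\|\ge e^{-cMk'}\|v\|\ge e^{-cMk'+2Mc}\ge e^{-Mc^+(M+1+k')},\]
the last step holding because $c^+M+c^++2c+(c^+-c)k'\ge 0$; this is $(\mathcal{PCE}^{n_{c_i}+M+1})$.

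Finally, the first inequality of Proposition~\ref{assemblage facile} gives
\[\|T_zf^{M+1+n_{c_i}}(0,1)\|=\|T_{f^{M+1}(z)}f^{n_{c_i}}(v)\|\ge e^{c^-n_{c_i}}\|v\|\ge e^{c^-n_{c_i}+2Mc}\ge e^{c^-(M+1+n_{c_i})},\]
using $2Mc\ge c^-(M+1)$ (recall $c^-=c-1/\sqrt M$, and $2Mc-c^-(M+1)=c(M-1)+\sqrt M+1/\sqrt M>0$); and the cone invariance carried by the common box (inherited through the $\star$-assembly of $c_i$ from $\hat G_4$) propagates $v\in\chi$ to $T_zf^{M+1+n_{c_i}}(0,1)\in\chi$, i.e.\ an angle less than $\theta$ with $(1,0)$, which proves \eqref{CEacertainmoment}. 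The hard part will be the boundary statement: one has to control the fold $f^{M+1}|Y_\square$ and the precise shape and location of the preimages of the parabola-like curves $\partial^s Y_{\underline c_i}$, leaning on the $C^2$-closeness estimates of \textsection\ref{partie parabolic}; by contrast the derivative estimates are soft, reducing to $\hat G_5$, Proposition~\ref{assemblage facile}, and the very generous inequalities among $c$, $c^+$ and $c^-$.
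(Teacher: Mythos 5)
Your treatment of the two derivative estimates is sound and is, structurally, what the paper is citing from Proposition 14.2 of \cite{berhen}: split $M+1+n_{c_i}$ at the fold, obtain $v:=T_zf^{M+1}(0,1)$ nearly horizontal with $\|v\|\ge e^{2Mc}$ from $\hat G_5$, then feed $v$ into the norm bounds of Proposition~\ref{assemblage facile} on the common box; your comparison inequalities ($c^+(M+1)+2c+(c^+-c)k'\ge 0$ and $2Mc\ge c^-(M+1)$) both check out. One small caveat: Proposition~\ref{assemblage facile} as stated gives only norm bounds for $u\in\chi$ and does not literally assert $T_xf^{n_{c_i}}(u)\in\chi$; the cone invariance you invoke for the angle part of $(\mathcal{CE})$ is true (it follows from $h$-times and $\hat G_4$ for the factors $\alpha_j$ of $c_i$) but deserves one sentence rather than a parenthetical.

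Where you genuinely diverge from the paper is the boundary description, and this is where the gap is. The paper points to Lemma 13.5 of \cite{berhen} (Lemma~\ref{pourle bord stable} here) with the template of Proposition~\ref{geoYg}: first prove $(\mathcal{PCE}^k)$ along the boundary curves (which, being pieces of $W^s(A)$, in fact satisfy $(\mathcal{PCE}^\infty)$, cf.\ Lemma~\ref{PCEinfty}), then read off from the contracting foliation $\mathcal F_k$ that those leaves are $\sqrt b$-close to arcs of $\{f_a(x)+y=cst\}$. You instead try to describe $\partial Y_{\square(c_i-c_{i+1})}$ directly as $f^{-M-1}$ of the parabola-like boundary of $Y_{\underline c_i}\setminus Y_{\underline c_{i+1}}$ cut off by $\partial Y_\square$. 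Your observation that $f_{a,0}^{-(M+1)}\{f_a(X)+Y=cst\}$ is a union of level sets $\{f_a(x)+y=u_j\}$ is correct, but passing from "$f^{M+1}$ is $e^{3(M+1)c^+}b$-$C^2$-close to $f_{a,0}^{M+1}$" to "the preimage arcs in $Y_\square$ are $\sqrt b$-$C^2$-close to such parabolas" is not a one-liner: the implicit curve is a level set of $F\circ f^{M+1}$, whose $C^2$ norm scales like $e^{2(M+1)c^+}$, and moreover $\partial^s Y_{\underline c_i}$ is itself only $\sqrt b$-close to a parabola by Claim~\ref{holonomie2}, not exactly one; controlling the $C^2$ deviation of the implicitly-defined preimage curve through this composition is precisely what Lemma~\ref{pourle bord stable} does via the contracting direction $e_k$. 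You flag this as "the hard part" but leave it open. Since you do establish $(\mathcal{PCE}^{n_{c_i}+M+1})$ in the second half, the cleanest way to close the gap is to use your own estimate and fall back on the paper's route (Lemma~\ref{pourle bord stable}), rather than pushing the direct perturbation argument through.
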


\begin{proof}
Inequalities $(\mathcal{PCE}^{n_{ \sc_i}+M+1})$ and (\ref{CEacertainmoment}) are given by Proposition $14.2$ of \cite{berhen}. The statement about the geometry of $Y_{\square(\sc_i-\sc_{i+1})}$ will be generalized in Proposition \ref{geoYg} and proved afterward. 
%
%
\end{proof}

In Figure \ref{position}, we draw all the possible topological shapes for $Y_{\square (\sc_i-\sc_{i+1})}$. From this, we remark that $Y_{\square (\sc_i-\sc_{i+1})}$ has one, two or three components. There is at most one component disjoint from $S^t$. If such a component exists, we denote it by $Y_{\square_b (\sc_i-\sc_{i+1})}$. There are one or two components which intersect $S^t$. If there are two components which intersect $S^t$, then there is one component at the left hand side of the other. We denote this component by $Y_{\square_- (\sc_i-\sc_{i+1})}$. The component at the right hand side of the other is  denoted by  $Y_{\square_+ (\sc_i-\sc_{i+1})}$. If there is only one component which intersects $S^t$, we  denote it by   $Y_{\square_a (\sc_i-\sc_{i+1})}$. In this case, we shall split $Y_{\square_a(\sc_i-\sc_{i+1})}$ into two components $Y_{\square_\pm (\sc_i-\sc_{i+1})}$.

\begin{figure}[h!]
    \centering
        \includegraphics{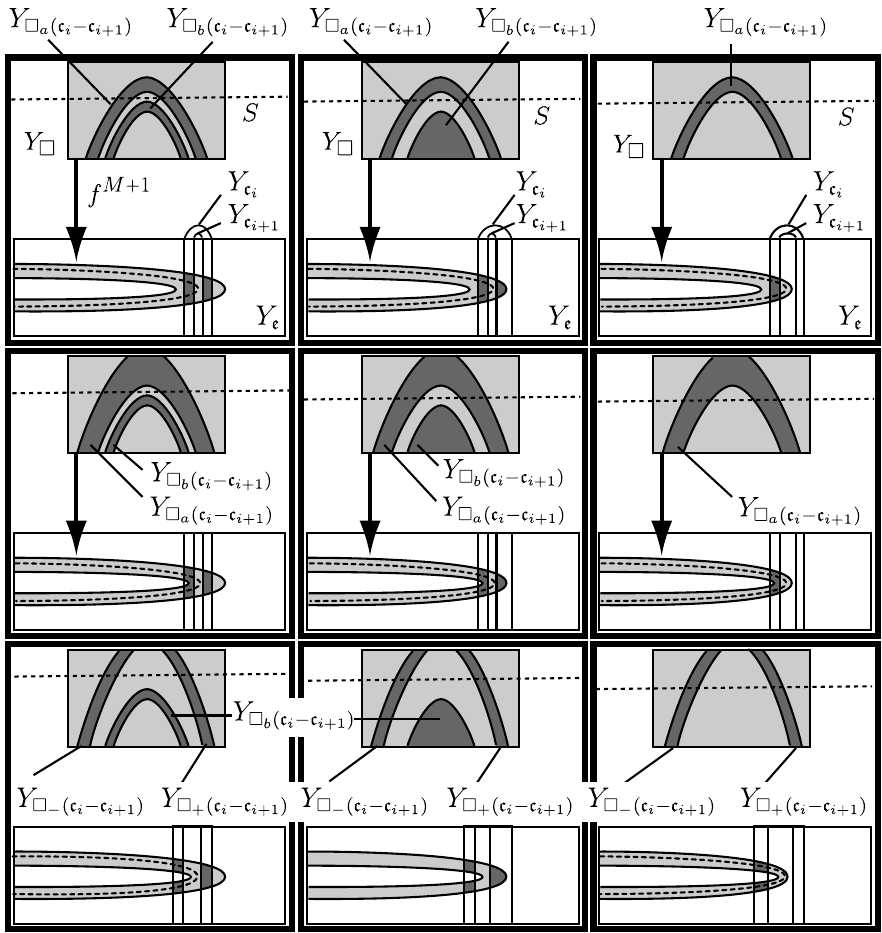}
    \caption{Possible shapes for $Y_{\square (\sc_i-\sc_{i+1})}$.}
   \label{position}
\end{figure}

Let $z_0$ be the point at the center of the segment $(f^{M+1}|S^t_\square)^{-1}(Y_{\sc_{i+1}})$ of $S^t$. 
Let $\Delta$ be the vertical line passing through $z_0$.  
 The line $\Delta$ splits the set $Y_{\square_a (\sc_i-\sc_{i+1})}$ into two components. The one at the left (resp. right) of the other is denoted by $Y_{\square_- (\sc_i-\sc_{i+1})}$ (resp. $Y_{\square_+ (\sc_i-\sc_{i+1})}$). We add $\Delta\cap Y_{\square_a (\sc_i-\sc_{i+1})}$ to $Y_{\square_- (\sc_i-\sc_{i+1})}$.  Figure \ref{partition} depicts this splitting.

\begin{figure}[h!]
    \centering
        \includegraphics{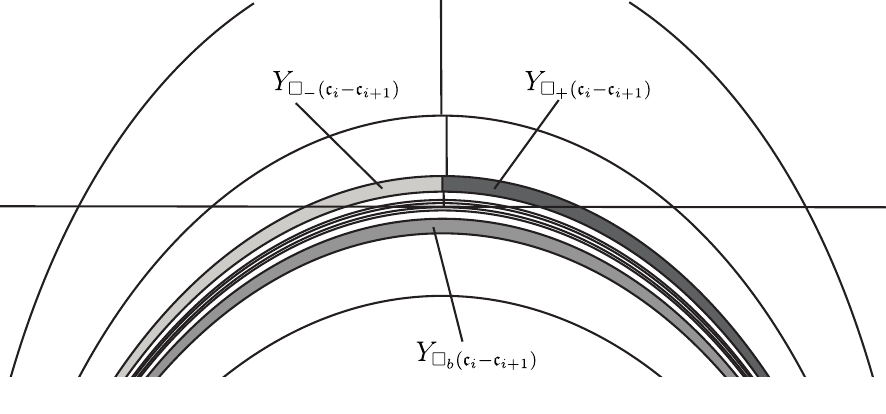}
    \caption{Partition of $Y_\square$. }
   \label{partition}
\end{figure}

For $\delta\in \{+,-,b\}$, we put 
$$\partial^sY_{\square_\delta(\sc_i-\sc_{i+1})}=\partial^sY_{\square (\sc_i-\sc_{i+1})}\cap  \partial Y_{\square_\delta (\sc_i-\sc_{i+1})}\quad \text{and}\quad 
\partial^uY_{\square_\delta(\sc_i-\sc_{i+1})}= cl(\partial Y_{\square_\delta (\sc_i-\sc_{i+1})}\setminus \partial^sY_{\square_\delta(\sc_i-\sc_{i+1})})
\; .$$ 

We remark that $\mathcal P(t):= \{Y_{\sa};\; \sa\in \sY_0\}\cup\big\{ Y_{\square_\delta (\sc^t_i-\sc^t_{i+1})};\;  i\ge 0,\; \delta\in \{+,-,b\}\big\}\cup Y_{\square c^t}$
 is a \emph{partition of} $Y_\se$  \emph{modulo} $W^s(A)$.\index{$\mathcal P(t)$}
This means that $\mathcal P(t)$ is a covering of $Y_\se$ and every pair of different elements of $\mathcal P(t)$ have their intersection in $W^s(A)$. 
 
The  partition $\mathcal P(t)$ depends on $t\in   T^*$, since $\sc_i:= \sc_i^t$ depends on $t$ and the lines $\Delta$ depend on $S^t$.

\index{$\sP$}  Let $\sP(t):= \sY_0\sqcup\{\square_\delta (\sc^t_i-\sc^t_{i+1}):\; i\in \mathbb N,\; \delta\in \{\pm, b\}\}\sqcup \{\square c^t\}$ be the set of symbols associated.
The set $\sP(t)$ is countable. If $\sa\in \sY_0$, we already defined an integer $n_\sa$. 
  
 Put $n_{\square_\delta (\sc^t_i-\sc^t_{i+1})}=M+1+n_{\sc^t_i}$ for $i\in \mathbb N$ and $\delta\in\{+,-,b\}$. Put $n_{ \square \sc^t}= \infty$. 

All the elements of $\sP(t)$ belong to $\sA$ but those of the form $\square c^t$ and  $\square_b (\sc^t_i-\sc^t_{i+1})$ for $i\ge 0$. 


\subsubsection{Regular points of Hénon like maps}
We recall that $t\!t:= (s_-)_{n\le -1}\in T_0$ is the exponent of the curve  $S^{t\!t}$ equal to a half local unstable manifold of $A$. This curve will be the starting point for the encoding.

Given a suitable sequence of symbols $\sg=(\sa_i)_{i=0}^n\in \sA^n$ from the curve $ S^{t\! t}$, the symbol 
$\sa_{i+1}$ belongs to $\sP(t\cdot \sa_0\cdots \sa_i)$ for every $i\in [0, n)$. This leads us to consider the set of points $z\in Y_\se$ such that $f^{n_{\sa_0}+\cdots +n_{\sa_i}}(z)$ belongs to $Y_{\sa_{i+1}}$ for every $i<n$. This set has in general a very wild geometry. However it is not the case if the sequence is regular. We recall that $\Xi:=e^{\sqrt M}$.\index{$\Xi$}

\begin{defi}\label{defregular} \index{Regular sequence}   A sequence of symbols $\sg=(\sa_i)_{i=0}^n\in \sA^n$  is \emph{regular}
 if $\sg$ is   suitable  from $ S^{t\! t}$ and the following inequality holds for every $i\le n$:  
\label{Rxi}
\begin{equation}\tag{$\dag$} n_{\sa_{i}}\le M+\Xi\sum_{0\le j<i} n_{\sa_j}\; .\end{equation}
\end{defi}
Also for $i=0$, the above equation gives $n_{\sa_0}\le M$ and so that $\sa_0$ is a simple piece: $\sa_0\in \sY_0$.

\begin{defi}\label{defweaklyregular} \index{Weakly regular sequence} A sequence of symbols $\sg=(\sb_i)_{i=0}^n\in \sA^n$  is \emph{weakly regular}
 if $\sg$ is   suitable  from $ S^{t\! t}$ and the following inequality holds for every $i\le n$:  
\label{Rxiweak}
\[n_{\sb_{i}}\le \Xi\cdot (M+\sum_{1\le j<i} n_{\sb_j})\;\]
\end{defi}

We notice that a common sequence is regular and that a regular sequence is  weakly regular.

\begin{prop}\label{geoYg}
For every weakly regular sequence $\sg=\sb_0 \cdots \sb_n$, the set 
\[Y_\sg:=\{z\in Y_\se:\;  f^{n_{\sb_0\cdots \sb_i}}(z)\in Y_{\sb_{i+1}},\; \forall i<n\}\]
is a box which satisfies the following properties:
\begin{enumerate}
\item  $\partial^s Y_{\sg}$ is formed by two segments of the stable manifold of $A$; both link  $\big\{y=-2\theta\big\}$ to $\big\{y=2\theta\big\}$ and are $\sqrt{b}$-$C^2$-close to an arc of a curve of the form $\{ P(x)+y=cst\}$. Its tangent vectors are $\theta^k$ contracted by $Df^k$ for every $k\ge 0$.
\item Both components of $\partial^s Y_{\sg}$ of are linked by two segments $\partial^u Y_{\sg}$ of 
$\big\{y=-2\theta\big\}$ and $\big\{y=2\theta\big\}$ respectively.

\item For every $z\in Y_{\sg}$, every vector $u=(u.x,u.y)$ so that $|u.y|\le \theta |u.x|$, it holds: 
\[\left\{\begin{array}{c} e^{\frac{c}3 (n_{\sg}-k)} \|D_zf^{k}(u)\|\le \|D_zf^{n_{g}}(u)\|\quad \forall k\le n_{\sg},\\
e^{-c^+(M+1)\Xi k} \le \|D_zf^{k}(u)\|\quad \forall k\le n_{\sg}.\end{array}\right.\]
\item Every $z\in Y_{\sg}$ belongs to a curve $\mathcal C\subset Y_\sg$, of length less than 1, intersecting every flat stretched curve and being $\theta^k$-contracted by $f^k$ for every $k\le n_\sg$.  
\item The set $Y^\sg:=f^{n_\sg}(Y_\sg)$ is a box such that
$\partial^u Y^\sg:= f^{n_\sg}(\partial^u Y_\sg)$ is the disjoint union of two flat curves and $\partial^s Y^\sg:= f^{n_\sg}(\partial^s Y_\sg)$ is made by two $\theta^{n_\sg}$-small segments of $W^s (A)$ passing trough the endpoints of $f^{n_\sg}(S^{t\! t}_\sg)$.
\end{enumerate}
\end{prop}
The proof of this proposition is done in \textsection \ref{sectionpreuvegeoYg}.
As for common sequences, given a weakly regular sequence $\underline \sb:= (\sb_i)_i$, we can define $W^s_{\underline \sb}:= \cap_{j\ge 0} Y_{\sb_0\cdots \sb_j}$. An immediate consequence of the above Proposition is the following:
\begin{coro}\label{geodesvarstable} The set $W^s_{\underline \sb}:= \cap_{j\ge 1} Y_{\sb_0\cdots \sb_0}$ is a connected curve with an endpoint in each of the lines $\big\{y=\pm 2\theta\big\}$. The curve $W^s_{\underline \sb}$ is $\sqrt{b}$-$C^{1+Lip}$-close to a segment of a curve of the form $\{ P(x)+y=cst\}$ and its tangent vectors are $\theta^k$-contracted by $Df^k$ for every $k\ge 0$.
\end{coro}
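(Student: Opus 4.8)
The plan is to deduce Corollary \ref{geodesvarstable} directly from Proposition \ref{geoYg}, treating it as essentially a limiting statement about the nested boxes $Y_{a_1\cdots a_i}$.

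First I would observe that, since $\underline a = (a_i)_i$ is $\Xi$-regular, every initial segment $a_1\cdots a_i$ is a $\Xi$-regular sequence, so Proposition \ref{geoYg} applies to each box $Y_{a_1\cdots a_i}$. By the definition of $Y_g$, the sets $Y_{a_1\cdots a_i}$ form a nested decreasing sequence of compact boxes (each is a subset of the previous one, since the defining conditions only accumulate), so the intersection $W^s_{\underline a} = \cap_{i\ge 1} Y_{a_1\cdots a_i}$ is a nonempty compact connected set: nonempty by compactness, and connected because a nested intersection of continua in the plane is a continuum. The third bullet of Proposition \ref{geoYg} says every horizontal line $\{y = C\}$ with $|C|\le 2\theta$ meets $Y_{a_1\cdots a_i}$ in a segment of length at most $e^{-c\, n_{a_1\cdots a_i}/3}$, and since $n_{a_1\cdots a_i}\to\infty$ as $i\to\infty$ (each $n_{a_j}\ge 1$, indeed $\ge 2$), the intersection with each horizontal line shrinks to a single point. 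Hence $W^s_{\underline a}$ meets each such horizontal line in exactly one point, which forces it to be a graph over the $y$-interval $[-2\theta, 2\theta]$, i.e. a connected curve with one endpoint on $\{y = 2\theta\}$ and one on $\{y = -2\theta\}$.

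Next I would establish the $C^{1+\mathrm{Lip}}$-regularity and $\sqrt{b}$-closeness to an arc $\{f_a(x)+y = cst\}$. For this I would use that each $\partial^s Y_{a_1\cdots a_i}$ consists of two segments of $W^s(A)$ that are $\sqrt{b}$-$C^2$-close to an arc of a curve $\{f_a(x)+y = cst\}$ (first bullet of Proposition \ref{geoYg}), and $W^s_{\underline a}$ is squeezed between the two components of $\partial^s Y_{a_1\cdots a_i}$ for every $i$. Since these two bounding curves are within horizontal distance $e^{-c\,n_{a_1\cdots a_i}/3}$ of each other and each is $\sqrt{b}$-$C^2$-close to a parabolic arc, their common $C^1$-limit — which must be $W^s_{\underline a}$ — inherits the $\sqrt{b}$-$C^{1+\mathrm{Lip}}$-closeness. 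Alternatively, and more cleanly, I would simply invoke that $W^s_{\underline a}$ is a local stable manifold of $f$: points in a common horizontal fiber of $Y_{a_1\cdots a_i}$ stay exponentially close under $f^{n_{a_1\cdots a_i}}$ by the analytic estimates of Proposition \ref{geoYg} (the $e^{-c\,n_g/3}$ bound propagates), so $W^s_{\underline a}$ is a stable set, and the standard stable manifold theory for these contracting graph transforms (as in the analogous Claim \ref{holonomie2}) gives the stated regularity. This reduces the corollary to exactly the same argument that proves Claim \ref{holonomie2} for common stable manifolds.

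The main obstacle, and really the only non-formal point, is verifying that $n_{a_1\cdots a_i} \to \infty$: one must rule out the sequence terminating or the orders staying bounded, but this is immediate since each symbol has $n_{a_j}\ge 2$ (even $n_{\square_\delta(\cdot)} = M+1+n_{c_i}\ge M+1$), so $n_{a_1\cdots a_i}\ge 2i\to\infty$. Everything else is a routine passage to the limit in the nested-box picture already set up by Proposition \ref{geoYg}, mirroring the proof of Claim \ref{holonomie2}. I would therefore write the proof as a short paragraph: invoke Proposition \ref{geoYg} on each initial segment, note the nesting and the shrinking of horizontal slices, conclude $W^s_{\underline a}$ is a graph over $[-2\theta,2\theta]$, and transfer the $\sqrt{b}$-$C^{1+\mathrm{Lip}}$-estimate from the bounding curves $\partial^s Y_{a_1\cdots a_i}$ to their limit.
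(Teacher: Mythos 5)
Your argument is correct and is essentially the paper's intended route: the paper gives no separate proof, asserting the corollary as an immediate consequence of Proposition~\ref{geoYg}, and your proof simply spells out why — nesting and compactness give a continuum, the third bullet (shrinking horizontal slices, with $n_{a_1\cdots a_i}\to\infty$) forces it to be a graph over $[-2\theta,2\theta]$, and the first bullet squeezes it between the components of $\partial^s Y_{a_1\cdots a_i}$, which transfers the $\sqrt b$-$C^{1+\mathrm{Lip}}$-closeness, exactly mirroring Claim~\ref{holonomie2}.
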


We are now ready to encode the dynamics, with respect to these regular sequences. 

For $z\in Y_\se\setminus W^s(A)$, let $\underline \sa(z):= (\sa_i(z))_{0\le i< p}$ be the maximal regular sequence of symbols in $\sA$ such that $z\in Y_{\sa_0\cdots \sa_{i}(z)}$, for every $i<p\in[0,\infty]$. Note that if $z\in Y_\square$, then  $p=0$ and the sequence $\underline \sa(z)$ is empty.  Otherwise $p\ge 1$ and $\sa_0(z)\in \sY_0$.

This sequence is uniquely defined. Indeed, as $z$ belongs to $Y_\se\setminus W^s(A)$, by induction on $i<p$, $f^{n_{\sa_0\cdots \sa_i}}(z)$ belongs to  $Y_{e}\setminus W^s(A)$. Therefore there exists a unique symbol $\sa_{i+1}\in \sP(t\! t\cdot \sa_0\cdots \sa_i)$ such that $z$ belongs to $Y_{\sa_{i+1}}$.

  \begin{defi} Such a point $z\in Y_\se\setminus W^s(A)$ is $p$-\emph{regular}. If $p=\infty$, the point $z$ is \emph{regular}. We denote $\tilde {\mathcal R}$ the set of regular points:\index{regular@$p$-regular points}
  \[\tilde {\mathcal R}=\{z\in Y_\se: \; z \; \text{is regular}\}\;.\]
	\index{$\tilde {\mathcal R}$}
  \end{defi}

We notice that every point of $Y_\se\setminus W^s(A)$ is at least $0$-regular. 

When $p=\infty$, by $\theta$-contraction of $W^s_{\underline \sa(z)}$, for every $i$, $f^{n_{\sa_0\cdots \sa_i}}(z)$ is $\theta^{{n_{\sa_0\cdots \sa_i}}}$-close to $S^{t\! t\cdot \sa_0\cdots \sa_i}$. Also by Condition $(\diamondsuit)$, the set 
$\partial^u Y_{\sa_{i+1}}$ is $\min(\theta, e^{-c^+n_{\sa_{i+1}} -2c^+\aleph(n_{\sa_{i+1}})})$-distant to $S^{t\! t\cdot \sa_0\cdots \sa_i}$. Using the fact that $n_{\sa_{i+1}}\ge M+\Xi n_{\sa_0\cdots \sa_i}$, it comes that   $f^{n_{\sa_0\cdots \sa_i}}(z)$ cannot belong to $\partial^u Y_{\sa_{i+1}}$.

Since the boundary of $Y_{\sa_{i+1}}$ is included in $W^s(A)\cup \partial^u Y_{\sa_{i+1}}$, and since $\tilde {\mathcal R}$ is disjoint to $W^s(A)$, it comes:
\begin{Claim}\label{pouraC0} \label{pour scs} 
The map $z\in \tilde {\mathcal R}\mapsto \underline \sa(z)\in \sA^{\mathbb N}$ is continuous, for $\sA^{\mathbb N}$ endowed with the discrete product topology.  Also the closure of $\tilde {\mathcal R}$ is included in $\tilde {\mathcal R}\cup \bigcup_{\underline \sa\; \text{regular chain}}\partial^s Y_{\underline \sa}$.
\end{Claim}

We show below the following important Proposition:
\begin{prop}\label{caracterisationdessequences} 
Let $z\in Y_\se\setminus W^s(A)$ be a $p$-regular point and let $(\sa_j)_{j\ge 0}:= \underline \sa(z)$.   

If $p<\infty$, then the symbol $d\in \sP(t\! t\cdot \sa_0\cdots \sa_{p-1}(z))$ such that $f^{n_{\sa_0\cdots \sa_{p-1}}}(z)$ belongs to $Y_d$ satisfies:
\[n_{d}> M+\Xi\sum_{0\le j<p } n_{\sa_j}.\] 
\end{prop}

In particular, this implies that $d$ is of the form  $\square c$ or $\square_\delta (\sc_i-\sc_{i+1})$. Then $z_p:=f^{n_{\sa_1\cdots \sa_{p-1}(z)}+M+1}(z)$ belongs to $W^s _\sc $ or $Y_{\sc_i}$, with $n_{\sc_i}\ge \Xi n_{\sa_0\cdots \sa_{p-1}}$. We recall  
that $\sc$ or $\sc_i$ is the $\star$-product of simple or parabolic pieces $(\sa_i')_{i=0}^m$ which form a regular sequence. Moreover, $m\ge n_{\sc_i}/(M+1)$ (see. \cite{berhen}).  This implies:
\begin{coro}\label{progressionexpo}
The point  $z_p$ is at least $\Xi n_{\sa_0\cdots \sa_{p-1}}/(M+1)$-regular. 
\end{coro}

This is why, for every $p$-regular point $z$, with $0\le p<\infty$, we complement the sequence $(\sa_i(z))_{i< p}$ of $z\in Y_\se\setminus W^s(A)$ by the following inductive way: 

Put  $\sa_p(z)=\square$ with $n_\square=M+1$ and then inductively $\sa_{p+i}(z):=\sa_{i}(z')$, for $i\ge 0$, with $z':= f^{m}(z)$ and $m=\sum_{i\le p}n_{\sa_i(z)}$.  


\begin{defi} 
A point $z$ is \emph{infinitely irregular} if the sequence $\underline \sa(z)$ takes infinitely many times the value $\square$. \index{Infinitely irregular}
Otherwise $z$ is \emph{eventually regular}: there exists $j$ such that $(\sa_{j+i}(z))_{i\ge 0}$ is regular.\index{regular@Eventually regular}
\end{defi} 

We notice that if $z$ is regular then $\underline \sa(z)$ belongs to $\sA^\mathbb N$. 
 
\begin{proof}[Proof of Proposition \ref{caracterisationdessequences}]  Let $\sg = \sa_0\star \cdots \star \sa_j(z)$ for any $j<p$. 

By Proposition \ref{geoYg}.4, the point $z':= f^{n_\sg}(z)$ belongs to the $\theta^{n_\sg}$-neighborhood of $S^{t\! t\cdot \sg}$.

Let us suppose that $z'$ belongs to a set of the form $Y_{\square_b (\sc_i-\sc_{i+1})}$. This implies that $f^{M+1}(z')$ belongs to the component of $Y_{\underline \sc_i}\setminus Y_{\underline \sc_{i+1}}$ which does not intersect $S^{t\! t\cdot \sg \square}$. From the tangency position,  the curve $S^{t\! t\cdot \sg \square}$ is $e^{-(2 \aleph(i+1)+1+n_{\sc_{i+1}})c^+}$-far from this other component. On the other hand, as $z'$ is $\theta^{n_\sg}$-close to $S^{t\! t \cdot \sg}$ and belongs to the convex $Y_\square$, hence the point $z'$ is $\theta^{n_\sg}$-close to $S^{t\! t \cdot \sg}_\square$. Thus $f^{M+1}(z')$ is $\theta^{n_\sg} e^{c^+ (M+1)}$-close to $S^{t\! t\cdot \sg \square}$. Therefore, $z'$ can belong to $Y_{\square_b (\sc_i-\sc_{i+1})}$ only if:
\begin{equation}\label{uneinegalitedetangence} e^{-(n_{\sc_{i+1}}+2 \aleph(i+1)+1)c^+}\le  \theta^{n_\sg} e^{c^+ (M+1)}\end{equation}  
 As $(n_{\sc_{i+1}}+ 2\aleph(i+1)+1)c^+\le (2c^+ +\frac {c}{3 } )n_{\square_b (\sc_i-\sc_{i+1})}$, inequality (\ref{uneinegalitedetangence}) implies:
\[-(2c^++\frac c3)n_{\square_b (\sc_i-\sc_{i+1})}\le n_\sg \log \theta+ c^+ n_{\square_b (\sc_i-\sc_{i+1})}.\]

As by Remark \ref{violent} $-\log \theta/ (4 c^+ + c/3)\ge \Xi +M$, 
  it comes that $n_{\square_b (\sc_i-\sc_{i+1})}$ is greater than $(M+\Xi) n_\sg$.

Suppose  for the sake of a contradiction, that $z'$ belongs to a box of the form $Y_{\sp}$ with $\sp=\square_\pm (\sc_i-\sc_{i+1})$ such that $n_\sp\le M+\Xi n_\sg$. We shall show that  $(\sa_j)_{p\ge j\ge 0}$ is suitable from $S^{t\! t}$ and so is a regular sequence, which is a contradiction with the definition of $p$. 
For this end, it suffices to show that $S^2:= S^{t\! t\cdot \sg}_\sp$ intersects $S^1:= f^{n_\sg}(S^{t\! t}_\sg)$. 

If $S^1\cap S^2= \varnothing$, then the local stable manifolds of the endpoints of $S^1$ and $S^2$ are disjoint. This means that 
$\partial^sY^\sg$ is disjoint from $\partial^sY_\sp$. By the same argument as for Claim \ref{pouraC0}, the set $Y^\sg$ is disjoint from
 $\partial^u Y_\sp$. Consequently, the boundary $\partial Y_\sp$ is disjoint from $\partial Y^\sg$. Thus $Y_\sp$ is either disjoint from $Y^\sg$ or $Y^\sg$ is included in $Y_\sp$. The first case cannot occur since $z'\in Y_\sp\cap Y^\sg$. Thus $Y^\sg\cap S^{t\! t\cdot \sg}=S^1$ is included in $Y_\sp\cap   S^{t\! t\cdot \sg}=S^2$ which is a contradiction.
 
If $S^1$ intersects $S^2$ only at an endpoint, then one curve of $\partial^sY_\sp$ contains one curve of $\partial^sY^\sg$. As $\partial^uY_\sp$ is disjoint from $Y^\sg$, the interiors of  $Y_\sp$ and $Y^\sg$ are disjoint. Consequently $Y^\sg\cap Y_\sp$ is included in $W^s(A)$. This is a contradiction with 
 $Y^\sg\cap Y_\sp\setminus W^s(A)\ni z'$. 

\end{proof}

\subsubsection{Lyapunov exponents of invariant probability measures}
We show here Theorem \ref{MainNew}. 

Let us define:
\[K_\square:= \bigcap_{N\ge 0} \bigcup_{n\ge N} f^n \big(\{z\in Y_\se; \underline \sa(z)=\square\cdots \square\cdots:\; x\in Y_\se\setminus W^s(A)\}\big).\]
The set $K_\square$ is equal to $\bigcap_{n\ge 0} f^{-n(M+1)}(Y_\square)= \bigcap_{n\ge 0} f^{-n(M+1)}(Y_{\square(\se - \sc_1^{t\!t})})$ (which is possibly empty).

For $b$ small enough w.r.t. $M$, this is the hyperbolic continuity of the uniformly hyperbolic compact set of $P^{M+1}$:
\[\bigcap_{n\ge 0} P^{-n(M+1)}(\R_{\square_-(\se-\sc_1)}\cup \R_{\square_+(\se-\sc_1)}),\]
whose expansion is more than $e^{c/3}$. Hence it comes: 

\begin{prop}\label{LyapKsquare}
If $K_\square$ is not empty,  any  of its invariant probability measure has a Lyapunov exponent at least $c/3$.
\end{prop}

\begin{lemm}\label{lyappos} There is no measure $\mu$ with both Lyapunov exponents negative.\end{lemm} 
\begin{proof}[Proof of Lemma \ref{lyappos}] 
For the sake of a contradiction, suppose that such a measure exists. 

First let us recall that by \cite{Katok-Haselblat}, coro S.5.2, p.694,  the support of $\mu$ contains a  periodic attractive orbit $(p_i)_i$.

Hence, either $\underline \sa (p_0)$ is eventually constantly equal to $\square$ or either, by Corollary \ref{progressionexpo}, the sequence $\underline \sa (p_0)$ contains regular segment of arbitrarily long length. 

The case where $\underline \sa (p_0)$ is eventually constantly equal to $\square$ corresponds to $p_0\in W^s(K_\square)$. The compact set $K_\square$ being hyperbolic, both Lyapunov exponents cannot be negative.

If $\sa(p_0)$ contains a regular segment $\sg=\sa_0\dots \sa_n$ of arbitrarily long length $n$, then by Proposition \ref{geoYg}.3, the point $p_0$ cannot belong to an attracting cycle.   
\end{proof}

This Lemma will enable us to prove  in \textsection \ref{sectionPropeventuallyregu}, the following two dimensional generalization of Proposition \ref{evenregudim1}. 
\begin{prop}\label{eventually regu}
For every invariant measure $\mu$ with support off $\{A,A'\}$, $\mu$-almost every point in $Y_\se$ is eventually regular or satisfies $\sa_i(z)=\square$ for all $i$ sufficiently large.
In particular $\mu(K_\square\cup \tilde{\mathcal R})>0$.
\end{prop}

We remark that Propositions \ref{LyapKsquare},  \ref{eventually regu} and \ref{geoYg}.3 imply the following scholium of  Theorem \ref{MainNew}:
\begin{theo}
Every invariant, probability measure $\mu$ for a strongly regular map $f$ has its Lyapunov exponent  at least $c/3$.
\end{theo}

By Proposition \ref{eventually regu},   an ergodic probability measure has its support either included in  $\{A,A'\}$, in $\hat K_\square:= \cup_{M\ge n\ge 0} f^n(K_\square)$ or in $\cup_{n\ge 0} f^n( \tilde {\mathcal R})$. 
We will see in section \ref{Ksquare} that the  entropy of  the measures  supported by $\hat K_\square$ is small. Hence we look at the measures supported by $\tilde {\mathcal R}$. To study their ergodic properties, we are going to split $\tilde {\mathcal R}$ into two subsets:
\begin{itemize} 
\item ${\mathcal R}$ which has a Markovian structure,
\item $\mathcal E$ which intersects only the support of probability measures with small entropy.  
\end{itemize}

We recall that we defined the set $\tilde {\mathcal R}$ thanks to the alphabet $\sA$ and the map:
\[z\mapsto \underline \sa(z)\in \sA^{\mathbb N}\; .\] 
Let $\tilde \sR$ be the image of this maps. Every point in $\tilde \sR$ is a sequence in $(\sa_i)_i\in \sA^{{\mathbb N}}$ which satisfies 
\[n_{\sa_i}\le M+1+\Xi \sum_{j=0}^{i-1} n_{\sa_j}\;.\] 
On $\sA^{\mathbb N}$, the shift dynamics $\tilde \sigma$ acts canonically. \index{$\tilde \sR$, $\sR$, $\sE$}
We observe that the set $\tilde \sR$ is not invariant by $\tilde \sigma$. Furthermore, not every point $(\sa_i)_i\in \tilde \sR$ comes back to $\tilde \sR$. 

Let $\sR$ be the points $(\sa_i)_{i\ge 0}\in \tilde \sR$ which return infinitely many times in $\tilde \sR$  by the shift dynamics.
Let $\sE$ be the complement of $\sR $ in $\tilde \sR$.
\[\sR:=\{\underline \sa := (\sa_i)_i\in \tilde \sR : \; \forall N\ge 0,\; \exists n\ge N\text{ s.t. } \tilde \sigma^n(\underline \sa)\in \sR\}\;.\]
\[\sE:= \tilde \sR \setminus \sR\; .\]

They define the sets:
\[\mathcal R := \{ z\in Y_\se\colon \underline \sa(z)\in \sR\}\quad \text{and}\quad \mathcal E := \{ z\in Y_\se\colon \underline \sa(z)\in \sE\}\;.\]\index{$\mathcal R$, $\mathcal E$}
which are called respectively \emph{infinitely regular set} and \emph{exceptional set}.

We observe that $\tilde{\mathcal R} = \mathcal R \cup \mathcal E$.

We will see in section \ref{setE}  that the entropy (and the Hausdorff dimension) of any ergodic probability measure supported by the orbit of $\mathcal E$ is small. Hence the interesting ergodic, probability measures  are contained in the orbit of $\mathcal R$. The next section is devoted to the study of this set, thanks to a Young tower. 

\section{Young Tower on $\Lambda$}\label{sectionMarkov}

In this section we deduce from the latter section a structure of Young tower on a subset $\Lambda$ satisfying properties $Y_1-Y_2-Y_3-Y_5$ of   Pesin-Senti-Zhang \cite{Pe10}. This allows to deduce the existence and uniqueness  of Gipps states and many of their properties. 

\subsection{The set $R$}
We recall that $\mathcal R=\cup_{\underline \sa\in \sR}W^s_{\underline \sa}$, where $\sR\subset \sA^{\mathbb N}$ is the set of regular sequences $\underline \sa\in \tilde \sR$ which come back infinitely many times in $\tilde \sR$ by the shift map  $\tilde \sigma\colon \sA^\mathbb N\circlearrowleft$. 

Hence every point in $\sR$  comes back infinitely many times in $ \sR$. 
For every $\underline \sa\in \sR$, let $N_\sR (\underline \sa)\in \mathbb N$ 
be the first return time of $\underline \sa$ in $\sR$. 

Also we put  $N_R(\underline\sa):= n_{ \sa_0}+\cdots +n_{\sa_{N_\sR(\sa)-1}}$. 

The return maps associated are the following:\index{$\tilde \sigma^\sR$, $f^{\mathcal R}$}
$$\tilde \sigma^\sR\colon \underline \sa\in \sR\mapsto \tilde \sigma^{N_\sR(\underline \sa)}(\underline \sa)\in \sR\quad \text{and}\quad f^{\mathcal R}\colon z\in \mathcal R\mapsto f^{N_{R}(\underline \sa(z))}(z)\in \mathcal R$$
Clearly the map $\underline \sa\colon z\in \mathcal R\mapsto \underline \sa(z)\in \sR$ semi-conjugates these dynamics:
\[\tilde \sigma^\sR\circ \underline \sa= \underline \sa\circ f^{\mathcal R}\]

\begin{rema} The map $\sigma^\sR$ is the first return map  in $\sR$, but $f^\mathcal R$ is in general NOT the first return map in $\mathcal R$.  Suppose there exists  $z\in \mathcal R$ such that $\underline \sa(z)$ is of the form $\sc_1\cdot \square_\pm (\sc_1-\sc_2)\cdot \sc_1\cdot \square_\pm (\sc_1-\sc_2)\cdots \sc_1\cdot \square_\pm (\sc_1-\sc_2)\cdots$.  Then  the first return time  of $z$ in ${\mathcal R}$ is $n_{\sc_1}+M+1$ and not $n_{\sc_1}+ M+1 +n_{\sc_1}$ as given by  $f^{\mathcal R}$.\end{rema} 

Hence the function $z\in \mathcal R\mapsto N(\underline \sa(z))$ is possibly not in $L^1(\mu)$ for some ergodic measure $\mu$.   

 A strategy would be to look at the combinatorial structure of $\mathcal R$  to show that it is always integrable. 
  Here we use a different strategy, we exhibit $R\subset \mathcal R$ so that the first return time of $x\in R$ by $f$ is also $N(\underline \sa (x))$. This leads us to consider: 
$$R:= \cap_{n\ge 0} (f^{\mathcal R})^n({\mathcal R})$$\index{$R$@$R$}
   
First of all it is important to know if the orbits of $R$ and $\mathcal R$ support the same measures.  By looking at the Lyapunov exponent we will show in Appendix \ref{prinvLyapExp}:
 \begin{prop}\label{EgaliterdesR} For every $f$-invariant probability $\mu$, the sets $\cup_{n\ge 0} f^n ( R)$ and $\cap_{N\ge 0} \cup_{n\ge N} f^n (\mathcal R)$ are equal $\mu$-almost everywhere. \end{prop}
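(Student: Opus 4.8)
The plan is to compare the two invariant sets through the first-return structure of $f$ in $\tilde{\mathcal R}$, using a Lyapunov-exponent argument to discard the "escaping" points. First I would recall the definitions: a point in $\cap_{N\ge 0}\cup_{n\ge N} f^n(\mathcal R)$ is the image (under some iterate of $f$) of a point $z\in\mathcal R$ whose forward $f$-orbit meets $\mathcal R$ infinitely often; equivalently, writing $\underline a(z)\in R$, the shifted sequences $\tilde\sigma^n(\underline a(z))$ lie in $\tilde R$ for infinitely many $n$. The set $\cup_{n\ge 0} f^n(\check{\mathcal R})$ is the orbit of the points $z\in\mathcal R$ for which \emph{all} forward $F$-iterates stay in $\mathcal R$, i.e.\ $\tilde\sigma^n(\underline a(z))\in\tilde R$ for a cofinal (indeed all, after the bookkeeping of $F$) set of $n$ in a way that never terminates. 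The inclusion $\cup_{n\ge 0} f^n(\check{\mathcal R})\subset \cap_{N\ge 0}\cup_{n\ge N}f^n(\mathcal R)$ is immediate from the definitions, since $\check{\mathcal R}\subset\mathcal R$ and every point of $\check{\mathcal R}$ returns to $\check{\mathcal R}\subset\mathcal R$ under $F$ infinitely often. So the content is the reverse inclusion $\mu$-almost everywhere.

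The key step is to show that $\mu$-a.e.\ point of $\cap_{N\ge 0}\cup_{n\ge N}f^n(\mathcal R)$ actually lies in $\cup_{n\ge 0} f^n(\check{\mathcal R})$. By $f$-invariance of $\mu$ it suffices to work inside $\mathcal R$ and prove that, up to a $\mu$-null set, $\mathcal R\cap(\cap_{N}\cup_{n\ge N}f^n(\mathcal R))\subset \cup_{n\ge 0}f^n(\check{\mathcal R})$. Consider the induced map $F$ on $\mathcal R$; a point $z\in\mathcal R$ fails to lie in $\cup_n f^n(\check{\mathcal R})$ precisely when its $F$-orbit eventually leaves $\mathcal R$, i.e.\ when there is some $F$-iterate $w=F^k(z)$ with $w\notin F(\mathcal R)$. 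Equivalently, the sequence $\underline a(z)$, after finitely many $F$-returns, produces a tail $(a_{j+i}(z))_{i\ge 1}$ which never again returns to $\tilde R$ — meaning the tail is, from some point on, identically $\square$ (infinitely irregular) or eventually $\Xi$-regular but with no further return of $\tilde\sigma^n$ into $\tilde R$. Both alternatives force the $f$-orbit of $z$ to eventually enter, and stay in the orbit of, the infinitely irregular set or the exceptional set $\mathcal E_0$ (the part of $\tilde{\mathcal R}$ corresponding to $(\tilde R\setminus R)\times[0,1]$). Concretely: if $\tilde\sigma^n(\underline a(z))\notin\tilde R$ for all large $n$, then $\underline a(z)\in\tilde R\setminus R$, so $z\in\mathcal E_0$; but then, by Poincar\'e recurrence applied to $\mu$ restricted to this invariant carrier, a.e.\ such $z$ would have to recur to $\mathcal R$ infinitely often, contradiction — unless the orbit is absorbed into $K_\square\cup\{A,A'\}\cup\mathcal E$. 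Since we assumed $z\in\cap_N\cup_{n\ge N}f^n(\mathcal R)$, its orbit \emph{does} meet $\mathcal R$ infinitely often, so the only escape route is through the orbit of $\mathcal E_0$, and I would show this set carries no mass via the Lyapunov/entropy estimates already prepared (the positive-exponent Lemma \ref{lyappos} and the regularity characterization in Proposition \ref{caracterisationdessequences}, which together pin down that infinitely-often-returning orbits are genuinely $\Xi$-regular along a subsequence and hence, after reindexing, lie in the orbit of $\check{\mathcal R}$).

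More precisely, the argument I would run is: let $X:=\cap_N\cup_{n\ge N}f^n(\mathcal R)$, an $f$-invariant set, and let $Y:=\cup_n f^n(\check{\mathcal R})$. Then $X\setminus Y$ is $f$-invariant, and a point $z$ in $\mathcal R\cap(X\setminus Y)$ has $\underline a(z)\in R$ (it returns infinitely often) yet its $F$-orbit leaves $\mathcal R$; writing $z_k:=F^k(z)$, let $k_0$ be maximal with $z_{k_0}\in\mathcal R$. Then the tail $\underline a(z_{k_0})$ lies in $\tilde R\setminus R$, so $z_{k_0}\in\mathcal E_0$, hence $z\in\cup_{n}f^{-n}(\mathcal E_0)$, and by Poincar\'e recurrence $\mu$-a.e.\ point of this set lies in $\mathcal E=\cap_N\cup_{n\ge N}f^n(\mathcal E_0)$; but on the other hand $z\in X$ forces $z$ into $\cap_N\cup_{n\ge N}f^n(\mathcal R)$, and the invariant splitting of the support shows these two carriers are disjoint off a null set (since $\mathcal R$ and $\mathcal E_0$ partition $\tilde{\mathcal R}$ modulo the return structure). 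The contradiction gives $\mu(X\setminus Y)=0$. The main obstacle is the bookkeeping needed to make "the $F$-orbit leaves $\mathcal R$ but the $f$-orbit returns to $\mathcal R$ infinitely often" genuinely impossible $\mu$-a.e.: one must rule out that a point oscillates — visiting $\mathcal R$ infinitely often at $f$-times that are never first-return times — and this is exactly where properties \emph{(d)} (the first-return property of $F$ on $\check{\mathcal R}$) and the contraction estimates of Lemma \ref{lelemmeafaire}.1 are used, via the observation that each visit of the $f$-orbit to $\tilde{\mathcal R}$ extends $\underline a(z)$ and the $\Xi$-regularity of the full infinite sequence $\underline a(z)$ is preserved, so $\underline a(z)\in R$ forces infinitely many \emph{genuine} $\tilde\sigma$-returns, hence $z\in\mathcal R$ returns under $F$ infinitely often, i.e.\ $z\in\cup_n f^n(\check{\mathcal R})$ after all.
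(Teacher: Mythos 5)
Your outline hinges on a misreading of the definition of $\check{\mathcal R}$, and this derails the whole argument. You treat $\check{\mathcal R}$ as the set of points in $\mathcal R$ whose \emph{forward} $F$-orbit stays in $\mathcal R$, and you locate the main difficulty in ruling out points whose $F$-orbit ``leaves'' $\mathcal R$ while the $f$-orbit keeps revisiting $\mathcal R$. But $F$ already maps $\mathcal R$ into $\mathcal R$: if $\underline a(z)\in R$, then $\tilde\sigma^{N_{\underline a(z)}}(\underline a(z))\in \tilde R$ and still returns infinitely often, so $\underline a(F(z))\in R$. There is no escape from $\mathcal R$ under $F$, so the contradiction you aim at (``$F$-orbit leaves, $f$-orbit returns'') never materializes, and your concluding ``hence $z\in\cup_n f^n(\check{\mathcal R})$ after all'' is a non-sequitur: all of $\mathcal R$ has $F$-orbit in $\mathcal R$, yet $\mathcal R$ is far from being contained in $\cup_n f^n(\check{\mathcal R})$. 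The set $\check{\mathcal R}=\cap_{n\ge 0}F^n(\mathcal R)$ is about \emph{backward} $F$-preimages: $z\in\check{\mathcal R}$ iff for every $n$ there is $z'\in\mathcal R$ with $F^n(z')=z$.

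With the correct definition in view, the actual difficulty is a ``time-alignment'' problem that your sketch does not touch. Given $x$ with $x\in f^{n}(\mathcal R)$ for arbitrarily large $n$, write $z^n=f^{-n}(x)\in\mathcal R$; the sequence $\underline a(z^n)$ starts with a $\sB$-block $g_1$ whose $f$-order could a priori exceed $n$ (and blow up with $n$), so one cannot simply declare $x\in\cup_m f^m(\check{\mathcal R})$. The paper's proof meets this head-on: using that $\mu(\mathcal R)>0$ forces a positive Lyapunov exponent (via Lemma \ref{lelemmeafaire}), one gets a backward contraction estimate $\frac1n\log\|T_xf^{-n}(u)\|\le -c/4$; Claim \ref{claim1} (supported by the analytic Lemma \ref{lemmapospone2}) then shows that the $\sA$-symbol of $\underline a(z^n)$ covering backward time $-p$ has order at most $p/2$, which bounds the first $\sB$-blocks uniformly in $n$ and, after a diagonal extraction, puts $x$ in $\cup_{0\le n\le N'}f^n(F^q(\mathcal R))$ for all $q$, hence in $\cup_n f^n(\check{\mathcal R})$. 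None of this Lyapunov/symbol-size control appears in your proposal; the Poincar\'e-recurrence/$\mathcal E_0$ dichotomy you invoke is orthogonal to the point and does not yield the conclusion.
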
 
 
We will also show in section \ref{Proofof proplift} the following:
\begin{prop}\label{lift}
The map $f^{\mathcal R}$ is the first return map of $R$ into $\tilde {\mathcal R}$ induced by $f$:
\[\forall z\in R,\; \forall i\in (0,  N({\underline \sa(z)})), \quad f^i(z)\notin \tilde {\mathcal R}\; .\]
\end{prop}
This implies that $f^{\mathcal R}|R$ is a bijection. Hence the inverse limit $\ola R$ of $R$ for $f^{\mathcal R}$ is equal to $R$:
\[\ola R= R\; .\]

 On the other hand, the inverse limit $\ola\sR\subset \sA^\mathbb Z$ for $\tilde \sigma ^\sR$ which is formed by sequences $(\sa_i)_{i\in \mathbb Z}$ so that $(\sa_i)_{i\ge 0}$ is $\sR$ and for infinitely many $k\ge 0$, the sequence $(\sa_{i-k})_{i\ge 0}$ is in $\sR$.  The semi-conjugacy $\underline \sa$ lifts canonically to the inverse limit to produce a map $\ola \sa$:
 \[\begin{array}{ccccc}
&& \ola \sa&&\\
&R&\to &\ola \sR&\\
f^{\mathcal R}&\downarrow & &\downarrow &\tilde \sigma^\sR\\
&R &\to &\ola  \sR&\\
&&\ola \sa&&
\end{array}
\]
\begin{prop}\label{Bij}
The map $\ola\sa$ is a bijection from $R$ onto $\ola\sR$.
\end{prop}
\begin{proof}
%

Let $(\underline \sa^{-i})_{i\ge 0}$ be a $\tilde \sigma^\sR$- preorbit in $\ola \sR$ : 
$\tilde \sigma(\underline \sa^{-i})=\underline \sa^{-i+1}$. Since:
\[\tilde \sigma^\sR\circ \underline \sa= \underline \sa\circ f^{\mathcal R}\;,\]
 the following curves are nested:
\[(f^{\mathcal R})^i(W^s_{\underline \sa^{-i}})\subset (f^{\mathcal R})^{-i+1}(W^s_{\underline \sa^{-i+1}})\subset\cdots \subset  W^s_{\underline \sa^0}\;.\]
By Proposition \ref{geoYg}.4, the length of $(f^{\mathcal R})^i(W^s_{\underline \sa^{-i}})$ is smaller than $\theta^{i}$. Thus this nested sequence of compact curves converges to a unique point $z\in \cap_{i\ge 0} (f^{\mathcal R})^i(\mathcal R)=R$. 
This proves both the surjectivity and the injectivity of $\ola \sa$. 
\end{proof}

Every point in $\mathcal R$ has a nice stable manifold. Actually one can show that every point in $\mathcal R$ has also a flat local unstable manifold, but in general it does not have a flat local unstable manifold which stretches across $Y_\se$. Such a \textquotedblleft non-stretching across property\textquotedblright occurs for instance for a point $z$ so that $\sa_{2i}(z)=\ss\in \sY_0$ and $\sa_{2i+1}(z)=\square_+(\sc_0-\sc_1)$ for every $i\le 0$. 
We recall that $\sc_0=\se$ and that $\sc_1\in \sY_0$. Then a local unstable manifold is contained in the limit of the flat stretched curve $S^{t\!t\cdot \ss\cdot  \square_+(\sc_0-\sc_1)\cdots  \ss\cdot  \square_+(\sc_0-\sc_1)}$ but the flat segment of this local stable manifold stops at $Y_{\sc_1}$ (inside which  the local unstable manifold is folded) and so it is not stretched.

On the other hand, we will see  in the sequel that  $f^{n_\ss}(z)$ has not only a local unstable manifold which is a flat stretched curve, but also a stable manifold enjoys a nice geometry.

\subsection{The sets $\Lambda$ and $\sL$}\label{thesubsectionlambda}
This leads us to consider the following symbolic sets:
$$\sL:= \tilde \sigma(\ola \sR), \quad 
\sL^s:= \{(\sa_i)_{i\ge 0}: \exists  (\sa_i)_{i\in \mathbb Z}\in \sL\}= \tilde \sigma(\sR)\quad 
\sL^u:= \{(\sa_i)_{i\le -1}: \exists  (\sa_i)_{i\in \mathbb Z}\in \sL\} $$
which will be useful to define a Young tower on the following set:
$$\Lambda:= \cup_{\ss\in \sY_0} f^{n_\ss}(Y_\ss\cap R).$$

Actually the map $h\colon \sqcup_{\ss\in \sY_0} int(Y_\ss)\ni z\mapsto f^{n_\ss}(z)\in Y_\se$ is injective since it is the first return map of points in $Y_\se$ by  $f$ into $Y_\se$. Note that $h$ sends $R$ onto $\Lambda$. Thus we can define:
\[\ola\sb\colon z\in \tilde \sigma(\overleftarrow \sR)= \Lambda\mapsto \tilde \sigma \circ \ola\sa\circ h^{-1}(z)\in \sL\quad\text{and}\quad  \underline \sb \colon z\in \Lambda\mapsto \tilde \sigma \circ \underline \sa\circ h^{-1}(z)\in \tilde \sigma(\sR)=\sL^s\; . \]

Furthermore, the map $h$ is a homeomorphism onto its image (the stable manifold of $A$ which contains $\cup_{\ss\in \sY_0} \partial ^s Y_s$ is disjoint from $\mathcal R$ and so from $R$). Put:

\[f^\Lambda=h\circ f^{\mathcal R}\circ h^{-1}:\Lambda\to \Lambda\quad \text{and}\quad 
\tilde \sigma^\sL:=\tilde \sigma\circ \tilde \sigma^\sR\circ \tilde \sigma^{-1}\; .\]\index{$\tilde \sigma^\sL$, $f^\Lambda$}

Let $N_\sL\colon \sL\to \mathbb N$ be such that 
$\tilde \sigma^\sL= \tilde \sigma^{N_\sL}$. We remark 
that  $N_\sL( (b_i)_{i\in \Z})= N_\sR((b_{i-1})_{i\ge 0})$. 

Let $N_\Lambda \colon \Lambda\to \mathbb N$ be such that 
$f^\Lambda= f^{N_\Lambda}$. We remark 
that  $N_\Lambda(z)=  n_{\sb_0(z)\star \cdots \star \sb_{N_\sL(\ola \sb(z))-1}}$. 



Also the following diagram commutes:

 \[\begin{array}{ccccc}
&& \ola \sb&&\\
&\Lambda&\to &\ola \sL&\\
f^\Lambda&\downarrow & &\downarrow &\tilde \sigma^\sL\\
&\Lambda &\to &\ola  \sL&\\
&&\ola \sb&&
\end{array}
\]

From Proposition \ref{eventually regu}, for every invariant probability  measure $\mu$, it holds:
\[\mu( \tilde {\mathcal R}\cup K_\square\cup\{A,A'\})>0\]
As  $\tilde {\mathcal R}= \mathcal R\cup \mathcal E$ it holds:
\[\mu( \cup_{n\ge 0}  f^n({\mathcal R})\cup f^n(\mathcal E) \cup \hat K_\square\cup\{A,A'\})=1\]
Thus, if the measure $\mu$ is ergodic, it holds:
 \[\mu( \cup_{n\ge 0}  f^n({\mathcal R}))=1\quad \text{or} \quad \mu( f^n(\mathcal E))=1 \quad \text{or} \quad \mu(\hat K_\square)=1\quad \text{or} \quad \mu(\{A,A'\})=1\]
As $\cup_{n\ge 0}  f^n({\mathcal R})$ is equal to 
$\cap_{N\ge 0} \cup_{n\ge N}  f^n({\mathcal R})$ modulo a $\mu$-null set, by Proposition \ref{EgaliterdesR}, it is also equal to 
$\cup_{n\ge 0} f^n(R)$ and so to $\cup_{n\ge 0} f^n(\Lambda)$ modulo a $\mu$-null set.   

Hence it holds:
\begin{prop} \label{EgaliteracLambda}
Every ergodic probability measure $\mu$,  one of the following conditions holds:
\begin{itemize}
\item Either $\mu$ is supported by $\{A,A'\}$ or $\hat K_\square$ or  $ \cup_{n\ge 0} f^n(\mathcal E)$,
\item Either  $\mu$ is supported by   $\cup_{n\ge 0} f^n(\Lambda)$.  
\end{itemize}
\end{prop}

%

From  Propositions  \ref{lift}  and \ref{Bij}, and the fact that $h$ is a first return map, it holds:
\begin{prop}\label{Bij2}
The first return time of $z\in \Lambda$ is $N_\Lambda(z) $. The map $\ola \sb\colon \Lambda\to \sL$ is a bijection. 
\end{prop}

The following countable subset of $\sA^{(\N)}$ will index the Markov partition of $\Lambda$ and $\sL$:
\[\sS:= \{\sb_0(z)\cdots \sb_k(z): z\in \Lambda, \; n_{\sb_0(z)\cdots \sb_k(z)}=N_\Lambda(z)\}=\{\sa_1\cdots \sa_{N_\sR(\underline \sa)}:\;  \underline \sa=(\sa_i)_{i}\in \ola \sR\}.\]
\index{$\sS$}

Given $\underline \sb\in \sL^s$, there exists $z\in \Lambda$ such that $\underline \sb =(\sb_i)_{i\ge 0}$, 
with $\ola\sb(z)=(\sb_i)_{i\in \mathbb Z}$. Then $\sg_0(z):=(\sb_i)_{0\le i< N_\sL(\ola \sb(z))}$ is in $\sS$. 
Likewise, for every $k\ge 0$, the chain $\sg_k(z):=\sg_0((f^\Lambda)^k( z))$ belongs to $\sS$. We notice that the sequence $\underline \sb(z)$ is the concatenation of the chains $(\sg_k(z))_{k \ge 0}$. We denote such a concatenation by:
\[\underline \sb(z)= (\sb_i)_{i\ge 0}=\sg_0(z)\cdot \sg_1(z)\cdots \sg_k(z)\cdots\]
Hence it makes sense to write $\sL^s\subset \sS^\mathbb N$. 

Similarly we define for $k>0$, $\sg_{-k}(z)= \sg_0((f^{\Lambda})^{-k}(z))$. We notice that $(\sb_i)_{i<0}$ is the concatenation of the chains $(\sg_k(z))_{k < 0}$. We denote such a concatenation by:
\[\overline \sb(z)= (\sb_i)_{i<0}= 
\cdots \sg_{-k}(z)\cdots\sg_{-2}(z)\cdot \sg_{-1}(z)\]
Hence it makes sense to write $\sL^u\subset \sS^{\Z^-}$.
As every sequence in $\sL$ is the canonical concatenation of a presequence in $\sL^u$ with a sequence in $\sL^s$, it holds:
\[\sL \subset \sL^u\cdot \sL^s\subset \sS^{\Z^-}\cdot \sS^\mathbb N= \sS^\mathbb Z\; .\]

 \paragraph{Stable leaves}
Every $z\in \Lambda$ satisfies that $h^{-1}(z)$  belongs to 
$W^s_{\underline \sa\circ h^{-1}(z)}$. 
We recall that $\underline \sa\circ h^{-1}(z)$ is of the form $  \sa_0\cdot \underline \sb(z)$, with $\sa_0\in \sY_0$ and $\underline \sb(z)\in \sL^s$ weakly regular. Hence this curve is  sent by $h$ into   $W^s_{\underline \sb(z)}\ni z$. We put

$$\gamma^s(z)= \gamma^s(\underline \sb(z)):=W^s_{\underline \sb(z)}\;.$$ 
%
%

By Corollary  \ref{geodesvarstable} it comes immediately:
\begin{Claim}\label{gammas}
The curve $\gamma^s(z)$ is $C^{1+Lip}$ close to an arc of parabola.  Moreover its tangent vectors are $\theta^k$-contracted by $f^k$ for every $k\ge 0$. \end{Claim}

\paragraph{Unstable leaves}

We recall that for every $\overline \sb\in \sL^u$ is of the form 
$\overline \sb= (\sg_i)_{i< 0} \in \sS^{\mathbb Z^-} $. Also by  Proposition \ref{Bij2}, there exists  $z\in \Lambda$ which belongs to
$$\gamma^u(\overline \sb):= \bigcap_{ i\ge 1} (f^{\Lambda})^i(Y_{\sg_{-i}\cdots \sg_{-1}}).$$ 
We put $\gamma^u(z)= \gamma^u(\overline \sb(z))$. 
\begin{Claim} \label{gammau}
The curve $\gamma^u(z)$ is flat and stretched. Moreover its tangent vectors are $e^{-ck/3}$-contracted by $f^{-k}$, for every $k\ge0$.
\end{Claim}
\begin{proof}
We notice that $\underline \sa \circ h^{-1}\circ (f^\Lambda)^{-i}(z)$ is of the form  $\sa_{-i-1}\cdot \sg_{-i}\cdot \sg_{-i+1}\cdots \sg_{-1}\cdots $, where $\sm_i:= \sa_{-i-1}\cdot \sg_{-i}\cdot \sg_{-i+1}\cdots \sg_{-1}$ is a regular, sequence, which is suitable from $S^{t\! t}$ and  so that: 
\[h^{-1}\circ (f^\Lambda)^{-i}(z)\in Y_{\sm_i}\;.\]
Furthermore it is complete since $\sg_{-1}$ is complete as every element in $\sS$.  Therefore, by Proposition  \ref{completearepuzzlepiece}, it is a puzzle piece of $S^{t\!t}$. This means that the segment 
$S^{t\! t}_{\sm_i}=Y_{\sm_i}\cap S^{t\! t}$ is sent by $f^{n_{\sm_i}}$ onto the flat stretched curve $S^{t\! t\cdot \sm_i}$.

By Proposition \ref{geoYg}.4, there exists a curve $\mathcal C$ included in $Y_{\sm_i}$ which passes through $z$, intersects the flat stretched curve $S^{t\! t}$ and which is $\theta^{n_{\sm_j}}$-contracted by $f^{n_{\sm_j}}$. 

Consequently, the point $z$ is $\theta^{n_{\sm_j}}$-close to the curve $S^{t\!t\cdot \sm_i}$.

Also, a trivial consequence of Proposition \ref{5.17} is the following:
\begin{lemm}
If $ \sg$ is suitable from $S^{t\! t}$ and 
 $\sg'$ is suitable from 
$S^{t\! t\cdot \sg}$, then the curves $S^{t\! t\cdot \sg'}$ and $S^{t\! t\cdot \sg\cdot \sg'}$ are $\theta^{n_{\sg'}}$-close for the $C^{1+Lip}$-topology.
\end{lemm}
This implies that $(S^{t\! t\cdot \sm_i})_i$ converges to a  curve $\gamma ^u(\overline \sb)$ in the $C^{1+Lip}$-topology and that $z$ belongs to $\gamma^u(\overline \sb)$. 

   Moreover by hyperbolicty of simple and parabolic pieces, 
the vectors tangent to the curve $S^{t\!t\cdot \sm_i}$ are uniformly $e^{-\frac c3 k}$-contracted by $f^{-k}$ for every $k\le n_{\sm_i}$. Thus the vectors tangent to the curve $\gamma^u(z)$ are uniformly $e^{-\frac c3 k}$-contracted by $f^{-k}$ for every $k\ge 0$. 
\end{proof}

\paragraph{Product structure on $\Lambda$}
We recall that $\sL^s\subset \sS^\mathbb N$,  $\sL^u\subset \sS^{\mathbb Z^-}$ and $\sL\subset \sS^\mathbb Z$. 
Actually these inclusions are equalities up to the subsets  made by sequences eventually equal to $\ss_-$:
\[\sA^{(\mathbb N)}\cdot \{\ss_-\}^\mathbb N:=\{(\sa_i)_i\in \sA^{\mathbb N}\colon \exists N\ge 0,\; \sa_i=\ss_-,\; \forall i\ge N\}\; ,\]
\[\sA^{(\mathbb Z)}\cdot \{\ss_-\}^\mathbb N:=\{(\sa_i)_i\in \sA^{\mathbb Z}\colon \exists N\ge 0,\; \sa_i=\ss_-,\; \forall i\ge N\}\; .\]
\begin{prop}\label{markovstructure}\label{Markov}
The following equalities hold:
 \[\sL^s= \sS^\mathbb N\setminus \sA^{(\mathbb N)}\cdot \{\ss_-\}^\mathbb N\quad \mathrm{ and }\quad \sL= \sS^\mathbb Z\setminus \sA^{(\mathbb Z)}\cdot \{\ss_-\}^\mathbb N\quad \mathrm{ and }\quad \sL^u= \sS^{\mathbb Z^-}\; .\]
 \end{prop}
 \begin{proof}
 We remark that if $\sL^s= \sS^\mathbb N\setminus \sA^{(\mathbb N)}\cdot \{\ss_-\}^\mathbb N$ then the inverse limit $\sL$ of $\sL^s $ is equal to the inverse limit $\sS^\mathbb Z\setminus \sA^{(\mathbb Z)}\cdot \{\ss_-\}^\mathbb N$ of $\sS^\mathbb N\setminus \sA^{(\mathbb N)}\cdot \{\ss_-\}^\mathbb N$. This implies also that  $\sL^u= \sS^{\mathbb Z^-}$. 
 
To prove that  $\sL^s= \sS^\mathbb N\setminus \sA^{(\mathbb N)}\cdot \{\ss_-\}^\mathbb N$, it suffices to prove that for every sequence  $(\sg_i)_{i\ge 0} \in \sS^\mathbb N$, there exists $\ss\in \sY_0$ so that $\ss\cdot \sg_0\cdots \sg_N$ is  regular from $S^{t\! t}$  for every $N$; in particular $Y_{ \ss\cdot \sg_0\cdots \sg_N}$ is non empty. Taking the limit $N\to \infty$, we see that it contains a regular point in $Y_\se\setminus W^s(A)$, if the sequence $(\sg_i)_i$ is not eventually equal to $\ss_-$.

First, we notice that by definition of $\sS$, there exists $\ss\in \sY_0$ so that  $\ss\cdot \sg_0$ is regular from $S^{t\! t}$.  As $\sg_0$ is complete, $\ss\cdot \sg_0$ is also complete, and so, by Proposition \ref{completearepuzzlepiece}, it defines a puzzle piece of   $S^{t\! t}$. Then we assume by induction on $N\ge 0$, that $\ss\cdot \sg_0\cdots \sg_N$ is complete and regular from $S^{t\! t}$. In particular  $\ss\cdot \sg_0\cdots \sg_N$ defines a puzzle piece of $S^{t\!t}$. Thus $f^{n_{\ss\cdot \sg_0\cdots \sg_N}}(S^{t\! t}_{\ss\cdot \sg_0\cdots \sg_N})$ stretches across $Y_\se$. By Proposition \ref{geoYg}, it  stretches also across $Y_{\sg_{N+1}}$, since $\sg_{N+1}$ is weakly regular. 
Lemma \ref{TestRadmissible} below implies that $\sg_{N+1}$ is suitable from $S^{t\! t\cdots \ss\cdot \sg_0\cdots \sg_N}$ and so $\ss\cdot \sg_0\cdots \sg_{N+1}$ is suitable from $S^{t\! t}$. Clearly it is also a complete chain. The regularity condition on the orders is straight forward. 
 \end{proof}
\begin{lemm}\label{TestRadmissible}
Every $(\sa_i)_{i\ge 0}\in \sL^s$ is suitable from every $S^t$, with $t\in T^*$ complete (that is   
$t=(\sa_i)_{i\le -1}$ with $\sa_{-1}\in \sY_0$). 
\end{lemm}
 This Lemma will be shown as Corollary \ref{coroporuProp3.2}. \begin{Claim}\label{pourwsa}
Every single symbol in $\sY_0$ is in $\sS$, whereas every non trivial $\sY_0$-chain is NOT in $\sS$:
\[\sY_0\subset \sS\quad \text{and}\quad \cup_{n\ge 2} Y_0^n\cap \sS=\varnothing\; .\]
In particular, for every $k\ge 2$, the word $\sg=(\ss_-)_{0\le i<k}$ does not belong to $\sS$.
\end{Claim}
\begin{proof} 
Let $k\ge 1$, and let   $(\ss_i)_{i=1}^k\in  Y_0^k\cap \sS$.
Then there exist $\ss_0\in Y_0$ and $\underline \sa=(\sa_i)_{i\ge 0}\in \sR$ such that $\sg\cdot \underline \sa\in \sR$ with $
\sg= (\ss_i)_{i=0}^{k-1}$, and $k$ is the first return time of $\sg\cdot \underline \sa$ in $\sR$. However the sequence $\ss_{k-1}
\cdot \underline \sa$ satisfies equality $(\dagger)$. Also $\tilde \sigma(\underline \sa)$ belongs to $\sL^s$. By the above Lemma, as $t\! t\cdot \ss_{k-1}\cdot \sa_0\in T_0\subset T^*$,  it 
The chain $\tilde \sigma(\underline \sa)$ is suitable from $S^{t\! t\cdot \ss_{k-1}\cdot \sa_0}$. Consequently $\ss_{k-1}\cdot \underline \sa$ is suitable from $S^{t
\! t}$ and so regular. Thus $\tilde \sigma^{k-1}(\sg\cdot \underline \sa)\in \sR$ and, by definition of the first return time, it holds $k=1$.
 \end{proof}

For every $\underline \sb\in \sL^s$  and every $\overline \sb\in \sL^u$, by Proposition \ref{markovstructure}, the sequence $\overline \sb\cdot \underline \sb$ belongs to $\ola \sL$. From the geometries of $\gamma^u(\overline \sb)$ and $\gamma^s(\underline \sb)$, there exists a unique intersection point in $\gamma^u(\overline \sb)\cap \gamma^s(\underline \sb)$
which must be the preimage by 
$\ola \sb$ of  $\overline \sb\cdot \underline \sb$.  
This proves that: 
\[\bigcup_{\sL^s} \gamma^s \cap \bigcup_{\sL^u}\gamma^u= \Lambda\]

%
%
%
%
%

\paragraph{Markov Structure $(Y_1)$}
For every $\sg\in \sS$, we define the sets:
\[\sL^s_\sg:=\sg\cdot \sL^s\subset \sL^s\quad \text{and}\quad \sL^u_\sg := \sL^u\cdot \sg\subset \sL^u\;,\]
which means that $\sL^s_\sg$ is formed by the sequences $\underline \sb'$ which begin with $\sg$ and continue with a certain $\underline \sb'\in \sL^s$, and similarly for the presequences in $\sL^u_\sg$.

 Let us define
 \[\Lambda_\sg^s:= \bigcup_{\underline \sb\in \sL^s_\sg} \gamma^s(\underline \sb)\cap \Lambda\quad \text{and}\quad 
\Lambda_\sg^u:= \bigcup_{\overline \sb\in \sL^u_\sg} \gamma^u(\overline \sb)\cap \Lambda\;.\]
\begin{Claim}\label{Y1} For every $\sg\in \sS$, for every $z\in \Lambda_{\sg_0}^s$ it holds:
\begin{enumerate}[(a)]
\item $f^\Lambda(\gamma^s(z))\subset \gamma^s(f^\Lambda(z))$ and $f^\Lambda(\gamma^u(z))\supset \gamma^u(f^\Lambda(z))$ ,
\item  $f^\Lambda(\gamma^s(z)\cap \Lambda)=\gamma^s(f^\Lambda (z))\cap \Lambda_{\sg_0}^u)$ and 
$f^\Lambda(\gamma^u(z)\cap \Lambda_{\sg_0}^s)= \gamma^u(f^\Lambda (z))\cap \Lambda$. 
\end{enumerate}
\end{Claim}
\begin{proof}
a) Let $\underline \sb(z)=\sg_0\cdots \sg_n\cdots $. We recall that $\underline \sb(f^\Lambda(z))=\sg_1\cdots \sg_n\cdots $ and:
\[\gamma^s(z):=\bigcap_{k\ge 0} Y_{\sg_0\cdots \sg_k}=\bigcap_{k\ge 1} Y_{\sg_0}\cap  f^{-n_{\sg_1\cdots \sg_k}}(Y_{\sg_1\cdots \sg_k})\]
which is clearly sent by $f^\Lambda=f^{n_{\sg_0}}$ into
\[\gamma^s(f^\Lambda (z)):=\cap_{k\ge 1}Y_{\sg_1\cdots \sg_k}\;.\]

We have:
\[\gamma^u(z):=\bigcap_{k\ge 1}f^{n_{\sg_{-k}}+\cdots +n_{\sg_{-1}}}(Y_{\sg_{-k}\cdots \sg_{-1}})\supset \bigcap_{k\ge 1}f^{n_{\sg_{-k}}+\cdots +n_{\sg_{-1}}}(Y_{\sg_{-k}\cdots \sg_{-1}})\cap Y_{\sg_0}\; .\]
Hence its image by $f^{\Lambda}$ contains 
\[\bigcap_{k\ge 0}f^{n_{\sg_{-k}}+\cdots +n_{\sg_{-1}}+n_{\sg_{0}}}(Y_{\sg_{-k}\cdots \sg_{-1}\cdot \sg_0})=\gamma^u(f^\Lambda(z))\; .\]
b) 
By the conjugacy $\ola \sb$, it is sufficient to see that $\ola \sb (\gamma^s(z)\cap\Lambda)= \sL^u\cdot (\sg_0 \cdot \sg_1\cdots )$ is shifted to $\ola \sb (\gamma^s(f^\Lambda (z))\cap \Lambda_{\sg_0}^u )= (\sL^u\cdot \sg_0)\cdot (\sg_1\cdots\sg_k\cdots )$ by $\tilde \sigma^\sL$, to prove the first equality.

By the conjugacy $\ola \sb$, it is sufficient to remark that $\overline \sb(z)\cdot  \sL_{\sg_0}^s=\overline \sb(z)\cdot  ({\sg_0}\cdot \sL^s) $  is shifted by $\tilde \sigma^{\sL}$ to  $(\overline \sb(z)\cdot  {\sg_0})\cdot \sL^s$, and that $\overline \sb(z)\cdot {\sg_0}=\overline \sb(f^\Lambda(z))$, to prove the second equality.
\end{proof}

\paragraph{Measures of the closures $(Y_2)$}
From Proposition \ref{markovstructure}, the following equality holds:
\[\sL^s = \bigsqcup_{\sg\in \sS} \sL ^s_\sg\Rightarrow 
\Lambda = \bigsqcup_{\sg\in \sS} \Lambda^s_\sg\;.\] 

This proposition will be shown in section \ref{setE}:
\begin{prop}\label{prop313}
For every $\overline \sb\in \Lambda^u$, 
$cl(\Lambda\cap  \gamma^u(\overline\sb))$ is the union of 
$\Lambda\cap  \gamma^u(\overline\sb)$ with a set of Hausdorff dimension at most $1/\sqrt{M}$. 
\end{prop}
This implies immediately the following:
\begin{Claim}\label{Y2}
For every $\overline \sb\in \Lambda^u$, 
 with $\leb_{\gamma^u(\bar \sb)}$ the Lebesgue measure on the curve $\gamma^u(\bar \sb)$, it holds:
 $$\leb_{\gamma^u(\overline\sb)}(
cl(\Lambda\cap  \gamma^u(\overline\sb))\setminus \Lambda) =0\; .$$
\end{Claim}
\paragraph{Pesin manifolds $(Y_3)$}
We recall that by Claims \ref{gammas} and \ref{gammau},
for every $z\in \Lambda$, it holds for every $n\ge 0$:
\[ \frac 1n \|Df^{-n}|T\gamma^u(z)\|\le {-c/3}\quad
\text{and}\quad 
 \frac 1n \|Df^{n}|T\gamma^s(z)\|\le \log \theta\;.\]
This implies that for every invariant probability  measure $\mu$, for $\mu$- a.e. $z$, the curves $\gamma^u(z)$ and $\gamma^s(z) $ are  respectively Pesin unstable and stable manifolds.  
%
%

\paragraph{SRB measure $(Y_5)$}
We recall that every strongly regular map leaves invariant a unique, physical, ergodic,  SRB  measure $\mu$, as proved in \cite{berhen}.  The unstable Lyapunov exponent $\lambda^u$ of $\mu$ must be greater than $c/3$. By Ledrappier-Young formula \cite{LYI85}, the entropy of $h_\mu$ of $\mu$  is equal to $\lambda^u$.   
 
We will see in Claim \ref{ClaimRE} and Corollary \ref{EntropysurE}, that the complement of the orbit of $\Lambda$ does not support measure with such a large entropy. Hence $\mu$ must be supported by the orbit of $\Lambda$:
\[\mu(\Lambda)>0\;.\]
As $\mu$ is an SRB measure, it is absolutely continuous w.r.t. the Pesin unstable manifold of the point in $\Lambda$. We recall that by $(Y_3)$, the unstable curves $(\gamma^u(\overline \sb))_{\sb\in \sL^u}$ are Pesin local unstable manifolds. Hence there exists a probability measure $\nu$ on $\sL^u$ so that for every Borelian $A\subset \Lambda$:
\[ \mu(A)  = \int_{\overline \sb\in \sL^u} d\leb_{\gamma^u(\overline \sb)}(A\cap  \gamma^u(\overline \sb))d\nu\;.\]
Thus, by the first return time property,  it holds:
\[ \mu(\cup_{n\ge 0} f^n(\Lambda))  = \int_{\overline \sb\in \sL^u} \int_{z\in \gamma^u(\overline \sb)\cap \Lambda}  N_\Lambda(z) d\leb_{\gamma^u(\overline \sb)}  d\nu\;.\]
This proves:
\begin{Claim}\label{Y5}
There exists $\overline \sb\in \sL^u$ so that:
\[0<\int_{z\in \gamma^u(\overline \sb)\cap \Lambda}  N_\Lambda(z) d\leb_{\gamma^u(\overline \sb)}<\infty \; . \]
\end{Claim}

\subsection{Conjugacy of $f|\cup_n f^n(R)$ with a strongly positive recurrent shift}

We can now conjugate the dynamics on $\cup_{n\ge 0} f^n (R)$ with Markov countable, mixing shift without the stable set of a 2-periodic point (corresponding to the stable set of the fixed point $A$). 

Let us recall that a countable shift is defined by a  graph $G$ with vertices $V$ and arrows $\Pi\subset V^2$.  
Let $\Omega_G$ be the set of infinite two-sided sequences $(v_n)_n\in V^\mathbb Z$ such that $(v_n,v_{n+1})\in \Pi$ for every $n$.
The shift map of $\Omega_G$ is denoted by  $\sigma$.\index{$\sigma$} \index{$\Omega_G$}

The  product structure of $\sL=\sS^\mathbb Z$ invites us to consider: 

\[V:= \{(\sg, i) :\; \sg \in \sS,\; 1\le  i\le n_\sg-1\}\sqcup\{\se\}\; ,\]
\[\Pi:= \big\{[\se, (\sg,1)]: \; \sg \in \sS\big\}\sqcup \big\{[(\sg,i),(\sg,i+1)]:\; 1\le i\le n_\sg-2: \; \sg\in \sS\big\}\sqcup\big\{[(\sg,n_\sg-1),\se)]: \; \sg \in \sS\big\}\; .
\]
\begin{Claim}\label{Bij3}
There is a canonical bijection between $\{(v_i)_i\in  \Omega_G:v_0=\se\}$ and $\sS^\mathbb Z$.
\end{Claim}
\begin{proof}
 Indeed, given such a $(v_i)_{i}$,  let $(i_k)_{k\in \mathbb Z}$ be the increasing sequence of integers so that $i_0=0$ and $v_{i_k}=\se$ for every $k$.  As the order of each element of $\sS$ is at least 2, it comes that $v_{i_k+1}$ is of the form $(\sg_k,1)$.  This defines a canonical map:
\[  (v_i)_{i}\in \{(v_i)_i\in  \Omega_G: v_0=\se\}\mapsto (\sg_k)_k\in \sS^\mathbb Z\; . \]
One easily checks that this map is a bijection.
\end{proof}
\begin{rema}
It could have been more natural to consider the canonical graph made by the vertices  $\{(\sg, i) :\; \sg \in \sS,\; 0\le  i\le n_\sg-1\}$. We replaced the vertices $\{(\sg, 0); \sg\in \sG\}$ of this graph by $\se$, in order to make the radius $R^*$ (defined and computed below)  much smaller. 
\end{rema}

Let $\hat A$ be the 2-periodic orbit in $\Omega_G$  corresponding to the $2$-periodic sequence $\cdots \se \cdot (\ss_-,1)\cdot\se \cdot (\ss_-,1)\cdots \in \Omega_G$. The stable set $W^s(\hat A)$ corresponds to the sequence which are eventually equal to $\se \cdot (\ss_-,1)\cdot \se\cdot (\ss_-,1)\cdots $. Put:
$$\Omega'_G:= \Omega_G\setminus W^s(\hat A)\; .$$
\index{$\Omega'_G$}

 By Claim \ref{pourwsa}, every $\ola v=(v_i)_{i\in \mathbb Z}\in \Omega'_G$ is canonically associated to a sequence $\ola \sg= (\sg_i)_i\in \sS^\mathbb Z$ whose concatenation is not  eventually equal to $s_-$.
 
  Hence it is in $\ola \sL$, and so there exists $z\in \Lambda$ so that   $\ola \sg=\ola \sb(z)$. 
\[i(\ola v)=z\; .\]

For every $\ola v\in \Omega'_G$, there exists $k\ge 0$ so that $v_k=\se$. We put:
\[i(\ola v )=f^k\circ i\circ \sigma^{-k}(\ola v )\;.\]

We observe that the following diagram commutes, with  $\sigma$ the shift dynamics of $\Omega_G$:
\begin{displaymath}  \xymatrix{
\Omega'_G\ar[d]_\sigma \ar[r]^i & \mathcal O({\Lambda})\ar[d]_f\\
\Omega'_G              \ar[r]^i & \mathcal O({\Lambda})
}
\end{displaymath}

We notice that the shift $\sigma\colon \Omega_G\to \Omega_G$ is mixing. 

\begin{prop}
The map $i$ is a bijection. 
\end{prop}
\begin{proof} 
It suffices to prove that $\{(v_i)_i\in  \Omega'_G:v_0=\se\}$ is sent bijectively onto $\Lambda$ by $i$.

The restriction of $i$ to this set is the composition of two bijections. 
The set $\{(v_i)_i\in  \Omega'_G:\sg_0=\se\}$ is sent bijectively to $\sS^\mathbb Z\setminus\sA^{(\mathbb Z)}\cdot \{s_-\}^\mathbb N$, by Claims \ref{Bij3} and \ref{pourwsa}. 
The set $\sS^\mathbb Z\setminus \sA^{(\mathbb Z)}\cdot \{s_-\}^\mathbb N$ is equal to $\sL$ by Proposition \ref{Markov}, and $\ola \sb^{-1}$ sents it bijectively onto $\Lambda$ by Proposition \ref{Bij2}. 
\end{proof}

To study the ergodic properties of $\sigma$, we regard for $n\ge 0$: 
\begin{itemize}
\item   the number $Z_n$   of loops from $\se$ in the graph $G$ of length $n$.
\item the number $Z_n^*$  of loops in the graph $G$ of length $n$  passing by $\se$ exactly once.
\end{itemize}

Let $R^*_{G}$ and $R_{G}$ be the convergence radii of the series $\sum_n Z_n^* X^n$ and $\sum_n Z_n X^n$. We remark that $R_G\le R^*_G$.
\begin{defi} \label{defiSPR}
The shift $(\sigma,\Omega_G)$ is \emph{strongly positive recurrent} if $R_G<R^*_G$.
\end{defi}

In Proposition \ref{noentropyailleurs}, we will see that the complement of the orbit of $\Lambda$ does not support any measure of high entropy. 
Since the map $x\mapsto x^2-2$ contains a horseshoe\footnote{For instance the horseshoe encoded by the symbols in $\sY_0$\; .} of entropy close $\log 2$, the same occurs for $f$ (for $M$ large and then $b$ small). Thus  the topological entropy of $f$ is at least close to $\log 2$, and by the Variational Principle, there is a measure of entropy at least close to 2 supported by the obit of $\Lambda$. Looking at the pull back by $i$ of this measure, it follows that the shift leaves invariant a probability measure of entropy at least close to $\log 2$. Since $i$ is a bijection (which is bi-measurable by Claim \ref{homeo} below), the entropy of $\sigma$ is bounded. 
Consequently  by \cite{Gu1969}, the radius $R_G$ is at most equal to $e^{-h_{top}}$ and so at most close to $1/2$.

On the other hand, with $\epsilon=1/\sqrt{M}$, we prove in the sequel:
\begin{prop}\label{propSPR} The convergence radius $R^*_G$ is greater than $e^{-2\epsilon}$. \end{prop}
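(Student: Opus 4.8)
The plan is to bound $Z_n^*$, the number of first-return loops at $e$ of length $n$, by counting the admissible words in $\sB$ that such a loop can realize, and then control the length $n_g$ of these words against their number. The key observation is that a first-return loop at $e$ of length $n$ passes through $e$ exactly once at each end; by the structure of the graph $G$, after leaving $e$ the path first traverses the ``tail'' $(g',0),(g',1),\dots$ of some $g=s\cdot g'\in \sB$, then possibly enters a new $s'$-block, and so on, always returning to $e$ only when it completes an $s$-segment of a block $s\cdot g''\in \sB$. So a first-return loop of length $n$ corresponds essentially to a word $\underline a$ of symbols in $\sA$ which is $\Xi$-regular, suitable from $S^{t\!t}$, of total order $n_{\underline a}=n$, and which decomposes into $\sB$-pieces in a way that returns to $\tilde R$ exactly once (at the very end). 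First I would set up this correspondence precisely, reducing the estimate to: the number of $\Xi$-regular suitable words $\underline a=a_1\cdots a_k$ from $S^{t\!t}$ with $n_{\underline a}=n$ is at most $e^{2\epsilon n}$ for all large $n$.

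Next I would prove that bound on $\Xi$-regular words. The point is that the alphabet $\sA$ is infinite, but the regularity constraint $(R^\Xi)$, namely $n_{a_i}\le M+\Xi\sum_{j<i}n_{a_j}$, severely restricts how many symbols of each order can appear. The simple symbols $\sY_0$ are finitely many (say $\#\sY_0=:s_0$, a number depending only on $M$), and by hypothesis $\hat G_7$ the map $s\mapsto n_s$ is $2$-to-$1$ onto $[2,M]$. The parabolic symbols $\square_\delta(c_i-c_{i+1})$ of a given order are also finite in number, and their order is $M+1+n_{c_i}\ge M+1$, so by $(R^\Xi)$ such a symbol in position $i$ forces $\sum_{j<i}n_{a_j}\ge (n_{c_i}-1)/\Xi$, i.e. a parabolic symbol of order $m$ can only occur after the partial sum has reached roughly $m/\Xi$. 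Combined with Proposition \ref{fondaele} (finitely many puzzle pieces of bounded order on a flat stretched curve), one gets a crude bound: the number of choices for $a_i$ given $a_1,\dots,a_{i-1}$ is polynomial in $\sum_{j<i}n_{a_j}$ with exponent and constant depending only on $M$ — call it $P(\sum_{j<i}n_{a_j})$. I would then estimate the product $\prod_i P(\sum_{j<i}n_{a_j})$ over a word of total order $n$: since each factor is at most $P(n)$ and there are at most $n/2$ symbols (each has order $\ge 2$), the number of words is at most $P(n)^{n/2}$, which grows subexponentially? — no, that is too big. So the real argument must be finer: group the symbols by the scale of the running sum. When the running sum is in $[2^k,2^{k+1})$, only boundedly many (in terms of $M$ and $k$, polynomially) symbols can be added, and each such symbol has order $\ge 2$, so at most $2^{k+1}$ symbols fit in that scale; the number of choices at each step is polynomial in $2^{k+1}$; summing $\log$ of these over the $O(\log n)$ scales gives $O((\log n)^2\cdot \mathrm{poly})$, which is $o(\epsilon n)$ for $n$ large. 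This yields the desired $e^{2\epsilon n}$ bound, hence $R_*\ge e^{-2\epsilon}$.

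The main obstacle I anticipate is making the combinatorial counting of admissible symbols genuinely subexponential in $n$ — i.e. ruling out that the countably many parabolic symbols proliferate fast enough to contribute exponential growth. The regularity inequality $(R^\Xi)$ together with Remark \ref{violent} ($-\log b\le \exp\exp M$, and $\Xi=e^{\sqrt M}$ enormous) is exactly what forces a parabolic symbol of large order to be preceded by a proportionally large accumulated order, so that the ``budget'' $n$ of total order is spent very inefficiently when using high-order symbols; this is the quantitative heart of strong positive recurrence. I would also need to be careful that the $\sB$-decomposition used to pass from loops in $G$ to words is a genuine bijection up to bounded multiplicity (the map $\gamma$ of the preceding discussion is surjective but not injective), but this only costs a bounded multiplicative factor and does not affect the radius of convergence. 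Once the word count is controlled, $R_*\ge e^{-2\epsilon}$ follows immediately from the root test, and combined with $R\le e^{-h_{top}}\approx 1/2$ one gets $R<R_*$, i.e. strong positive recurrence.
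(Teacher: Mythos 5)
Your reduction (estimate $Z_n^*=\text{Card}\{g\in\sB:n_g=n\}$, then get $R_*\ge e^{-2\epsilon}$ by the root test) is the same as the paper's, and you correctly flag the genuine danger: the alphabet $\sA$ is infinite and one must rule out exponential proliferation of parabolic symbols. But the combinatorial estimate you propose to close this gap does not work, and it misses the two ingredients that actually carry the paper's proof.

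First, the decisive quantitative fact is not a vague ``polynomial in the running sum'' bound but a sharp one: for each integer $k\ge 2$, and each partial word $a_1\cdots a_j$ suitable from $S^{t\!t}$, there are \emph{at most two} symbols of order $k$ in $\sA$ that can be appended while keeping the word suitable (for simple symbols this is $\hat G_7$; for parabolic symbols it is the $\pm$ dichotomy coming from the unique tangency of $S^\square$ with a given common stable manifold). Second, the constraint that really does the work is not the regularity inequality $(R^\Xi)$ but the structural fact that every non-simple symbol has order at least $M+1$: this means a block ``one simple symbol followed by parabolic symbols'' of total order $m$ obeys the recursion $P_m\le 2\sum_{k\ge M+1}P_{m-k}$ (with $P_m=2$ for $2\le m\le M$), which has subexponential-in-$\epsilon$ growth $P_m\lesssim e^{\epsilon m}$ because $\epsilon=1/\sqrt M$ dominates the rate $\sim(\log 2)/M$. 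Then $Z_n^*$ is controlled by a Cauchy-type convolution $Z_n^*-P_n\le\sum_k Z_k^* P_{n-k}$, giving $Z_n^*\le 2e^{2\epsilon n}$.

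Your scale-by-scale resolution of the obstacle actually gives a bound that is too weak. At scale $k$ (running sum in $[2^k,2^{k+1})$) there can be up to $\sim 2^{k}$ steps (each symbol has order $\ge 2$), each with, in your accounting, $\mathrm{poly}(2^k)$ choices; so the logarithm of the contribution at scale $k$ is $\sim 2^{k}\cdot O(k)$, and summing over $k\le\log_2 n$ gives $O(n\log n)$, i.e.\ a bound $e^{O(n\log n)}$ on the word count. That is much larger than $e^{2\epsilon n}$ and useless for the root test. The mistake is precisely that you do not exploit the order-$\ge M+1$ floor on parabolic symbols (which makes the ``expensive'' symbols sparse) nor the at-most-two-per-order fact (which makes the ``cheap'' steps nearly deterministic). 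Without both, the counting degenerates to superexponential. Also, you over-emphasize the role of the $\Xi$-regularity inequality and of $\Xi=e^{\sqrt M}$: in the paper's proof of this proposition, neither enters; the relevant smallness is $\epsilon=1/\sqrt M$ versus the recursion rate $\sim 1/M$, which is a purely alphabet-structural comparison.
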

As the entropy of the shift is at least close to $\log 2$, the latter proposition implies:
\begin{coro}The shift $\sigma$ is strongly positive recurrent.\end{coro}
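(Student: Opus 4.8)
The plan is to deduce this corollary directly from the definition of strong positive recurrence together with the entropy lower bound already in hand and Proposition \ref{propSPR}. Recall that, $\sigma$ being topologically mixing (Proposition \ref{shiftmixing}) and of finite topological entropy, it is strongly positive recurrent exactly when $R<R_*$, where $R$ is the radius of convergence of $\sum_n Z_n X^n$ and $R_*$ that of $\sum_n Z_n^* X^n$. So it suffices to produce an upper bound for $R$ that lies below the lower bound for $R_*$ supplied by Proposition \ref{propSPR}. Throughout, $\epsilon=1/\sqrt M$ with $M\ge 1000$ by Remark \ref{violent}, so $\epsilon$ is tiny.

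First I would bound $R$. By Gurevich's work \cite{Gu1969} on connected countable Markov shifts, $\bar h(\sigma)=\limsup_n \tfrac1n\log Z_n$, hence by Cauchy--Hadamard $R=e^{-\bar h(\sigma)}\le e^{-\bar h(\sigma)}$, and it remains to see $\bar h(\sigma)\ge \log 2-\epsilon$. This I would get from three facts already available: the Lemma stating $h(f)\ge \log 2-\epsilon$ (obtained by hyperbolic continuation of the Chebyshev horseshoe); Propositions \ref{entropyKsquare}, \ref{mesexc} and \ref{EgaliterdesR}, which show that any $f$-invariant probability not giving full mass to $\mathcal O(\check{\mathcal R})$ has entropy far below $\log 2$, so that $h(f)=\sup\{h_\mu:\ \mu(\mathcal O(\check{\mathcal R}))=1\}=h\big(f|\mathcal O(\check{\mathcal R})\big)$; and the conjugacy $i$ of Proposition \ref{conjugaison} between $f|\mathcal O(\check{\mathcal R})$ and $\sigma|\Omega'_G$, which preserves the supremum of measure entropies since the removed set $W^s(\tilde A)\sqcup W^s(\sigma\tilde A)$ carries no invariant probability of positive entropy. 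Thus $\bar h(\sigma)=h(\sigma)=h(f)\ge \log 2-\epsilon$, whence $R\le e^{-(\log 2-\epsilon)}$.

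Next I would invoke Proposition \ref{propSPR}, which gives $R_*>e^{-2\epsilon}$. To conclude $R<R_*$ it then suffices to verify the elementary inequality $e^{-(\log 2-\epsilon)}<e^{-2\epsilon}$, i.e. $3\epsilon<\log 2$; since $\epsilon=1/\sqrt M\le 1/\sqrt{1000}<(\log 2)/3$ this holds, and in fact $2R\lessapprox R_*$. Therefore $R<R_*$, and $\sigma$ is strongly positive recurrent.

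I expect no genuinely hard step here: all the real work sits in the earlier results being invoked — the entropy lower bound, the small entropy of the components off $\mathcal O(\check{\mathcal R})$, the conjugacy of Proposition \ref{conjugaison}, and above all the estimate $R_*>e^{-2\epsilon}$ of Proposition \ref{propSPR}. The only point deserving a line of care is the identification $\bar h(\sigma)=h(f)$ in the second paragraph: one must note that the conjugacy $i$ is defined only on $\Omega'_G=\Omega_G\setminus(W^s(\tilde A)\sqcup W^s(\sigma\tilde A))$, yet this causes no loss of entropy because an ergodic measure charging the stable set of a periodic orbit must be the periodic orbit measure itself, which has zero entropy.
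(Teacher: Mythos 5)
Your proposal is correct and follows essentially the same route as the paper: the entropy of the shift is close to $\log 2$ (via the conjugacy with $f|\mathcal O(\check{\mathcal R})$ and the small entropy of the other components), so $R\le e^{-\bar h(\sigma)}$ is close to $1/2$, while Proposition \ref{propSPR} forces $R_*>e^{-2\epsilon}$, whence $R<R_*$. You merely spell out the entropy comparison and the removal of $W^s(\tilde A)$ in more detail than the paper does.
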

\begin{proof}[Proof of Proposition \ref{propSPR}]
To bound $R_G^*$ from below, we are going to show that:
\begin{equation}\label{majorationdeZn} Z_m^*\le 2 e^{2\epsilon m},\quad \forall m\ge 2.\end{equation}

We notice that:
\[Z^*_m=\text{Card}\,\{\sg\in \sS:\; n_\sg=m\}.\]

Below we will prove the following upper bound on the number of
 suitable, prime, complete  chains of symbols from $S^t$ of order $m$, among $t\in T^*$:
\begin{lemm}\label{upperboundonP_n}
$$P_m:= \sup_{t\in    T ^*  } \text{Card }\{(\sa_1\cdots \sa_j)\cdot \sa_{j+1}\in (\sA\setminus \sY_0)^{(\mathbb N)}\times \sY_0:\; t\cdot \sa_1\cdots \sa_j\cdot \sa_{j+1}\in    T   ^*   ,\; n_{\sa_1\cdots \sa_j\cdot \sa_{j+1}}=m\}\le 2 e^{\epsilon m}
$$
\end{lemm}

Let $\sg=(\sa_i)_{i=1}^N\in \sS$ be such that $n_\sg=m$. 
We recall that $\sa_N$ belongs to $\sY_0$. 

There are two possibilities. 

Either $\sa_i$ does not belong to $\sY_0$ for every $i<N$. Then  the cardinality of such a possibility is at most~$P_m$. 

Either there exists $i_0<N$ maximal such that $\sa_{i_0}$ belongs to $\sY_0$. 
Put $\sg':= \sa_1\cdots \sa_{i_0}$ and $\underline \sa:= \sa_{i_0+1}\cdots \sa_N$. We remark that 
$\sg'\cdot \ss_+\cdots \ss_+\cdots $ belongs to $\sL^s$, and so 
$\sg'$ belongs to $\sS$. The cardinality of such $\sg'$ is bounded by $Z^* _{n_{\sg'}}$ while the cardinality of such $\underline \sa$ is given by $P_{n_{\underline \sa}}$. The cardinality of such a possibility is bounded from above by $Z^* _{n_{\sg'}}\cdot P_{m-n_{\sg'}}$. By Claim \ref{pourwsa}, the word $\underline \sa \in \sS$ is not a concatenation of symbols in $\sY_0$. Hence $n_{\underline \sa}>M$. Thus by induction:
\[Z_m^* \le P_m+\sum_{k=0}^{m-M-1} Z^*_k \cdot P_{m-k}\le P_m+\sum_{k=0}^{m-M-1} 4 e^{2 \epsilon k} e^{\epsilon(m-k)}\le  2e^{\epsilon m}+4 \frac{e^{\epsilon (m-M)+\epsilon m}}\epsilon.\]
As ${e^{-\epsilon M}}/\epsilon$ is very small and as $2e^{\epsilon m}$ is small w.r.t. $2^{2\epsilon m}$,it comes that 
 $Z_m^*$ is small with respect to $2 e^{2\epsilon m}$, for $m\ge M+1$.\end{proof}
\begin{proof}[Proof of Lemma \ref{upperboundonP_n}]
 The last symbol has an order at most $M$, while the other symbols have an order at least $M+1$. Also, given $(\sa_i)_{i=1}^j$  suitable from $S^{t\! t}$, for each $k\ge 2$, there are at most two symbols $\sa_{j+1}$ in $\sA$ of order $k$ such that $(\sa_i)_{i=1}^{j+1}$ is suitable from $S^{t\! t}$ (this is clear when $\sa_{j+1}$ is parabolic, when it is simple it follows from definitions \ref{Psimple} and \ref{henonsimple}). 

Consequently, it holds $P_m=2$ for $2\le m\le M$ and for $m\ge M+1$:
\[P_m \le 2\sum_{k=M+1}^{m-2} P_{m-k}=2\sum_{k=2}^{m-M-1} P_{k}.\]
Thus if  $M+1\le m\le 2M+1$, these two inequalities imply $P_m\le 4 (m-M-2)\le 2e^{\epsilon m}$. If $m\ge 2M+2$, the induction gives:
 \[P_m \le 2\sum_{2}^{m-M} 2 e^{\epsilon k}\le 4 e^{2\epsilon} \frac{1-e^{\epsilon(m-M-1)}}{1-e^{\epsilon}}\le 4 \frac {e^{\epsilon(m-M+1)}}{\epsilon}\]

This proves that  $P_m$ is less than $2e^{\epsilon m}$, since $m\ge 2M+1$ implies that $e^{\epsilon m}$ is much larger than $M$  
$4 e^{-\epsilon M}/\epsilon$ is very small, by Remark \ref{violent}.
\end{proof}
 Hence -- as explained in the introduction -- it suffices  to prove the holder continuity of $i$ and the continuity of $\underline \sb$ to accomplish the proof that $f|\cup_n f^n(R)$ satisfies all the conclusions of Theorem \ref{Main}.

 \subsection{Hölder continuity of $i$ and continuity of its inverse}\label{Holder continuity of i} 
 
 Given $\underline w=(w_i)_i$, $\underline w'=(w'_i)_i\in \Omega_G'$, let 
 $$\underline w\wedge \underline w' = \sup\{n\ge 0:\; w_i=w_i',\; \forall i\in [-n,n]\}\;,$$
with the convention $\sup\varnothing =-\infty$.  This defines the following metric on $\Omega_G'$:
 \[ d(\underline w, \underline w')=2^{-\underline w\wedge \underline w'}\;.\]
 
\begin{Claim} \label{Holder}
The map $i$ is $\frac c{3\log 2}$-Hölder for the metric $d$.
\end{Claim}
\begin{proof}
 Let $\underline w=(w_i)_i$, $\underline w'=(w'_i)_i\in \Omega_G'$ and put $n:= \underline w\wedge \underline w'$.  The Claim is obvious when $n=\pm\infty$. Let us suppose $n\ge 0$.

Let $m\ge n$ be minimal such that $w_i=w'_i$ for every $-m\le  i\le 0$ and $w_{-m}=w'_{-m}=\se$. 
 
We notice that  there exists $\underline \sa\in \sS^{(\mathbb N)}$ so that  $f^{-m}\circ i(\underline w)$ and $f^{-m}\circ i(\underline w')$
 belong to $Y_{\underline \sa}$ and $n_{\underline \sa}\ge n+m$ .
 
By Lemma \ref{TestRadmissible}, the chain $\underline \sa$ is suitable from $S^{t\!t}$, in particular the pair  $(S^{t\!t}_{\underline \sa},n_{\underline \sa})$
 is well defined and hyperbolic.  This implies that the length of 
 $f^m(S^{t\!t}_{\underline \sa})$ is smaller than $|Y_\se| e^{-(n_{\underline \sa} -m) c/3}\le |Y_\se| e^{-n c/3}$, where $|Y_\se|$ denote the \emph{width of} $Y_\se$ that is the maximal length of a flat stretched curve. 

Furthermore, by Proposition \ref{geoYg}, $f^{-m}\circ i(\underline w)$ belongs to a curve $\mathcal C\subset Y_{\underline a}$ 
and 
$f^{-m}\circ i(\underline w')$ belongs to a curve $\mathcal C'\subset Y_{\underline a}$ 
such that:
\begin{itemize}
\item the length of $f^{k}(\mathcal C)$ and $f^{k}(\mathcal C')$ are smaller than $\theta^k$ for every $k\le m+n$,
\item the curves $\mathcal C$ and $\mathcal C'$ intersect $S^{t\!t}$ at points $z,z'\in Y_{\underline a}$.
\end{itemize}

Thus  the distance between $f^{m}(z)$ and $i(\underline w)$ is smaller than $\theta^m$ and the distance between $f^{m}(z')$ and $i(\underline w')$ is smaller than $\theta^m$.
Also, the distance between  $f^{m}(z)$ and $f^{m}(z')$  is smaller than the length of  $f^m(S^{t\!t}_{\underline \sa})$. Thus:
\[d(i(\underline w),i(\underline w'))\le  2\theta^n+|Y_\se|e^{-n c/3}\le 2 d(\underline w, \underline w')^{\frac{c}{3\log 2}}.\]
\end{proof}
\begin{Claim}\label{homeo} The map $i$ is a homeomorphism from $\Omega_G'$ onto $\cup_n f^n(\Lambda)$.
\end{Claim}
\begin{proof}
By Claim \ref{pouraC0}, the map $\underline \sa\colon \mathcal R\to \sR$ is continuous.  Moreover the map $\tilde \sigma$ is continuous from $\sR$ into $\sL^s$ (since $\tilde \sigma\in C^0(\sA^\N,\sA^\N))$. Also the map $h$ is continuous from $Y_\se\setminus \cup_{\ss\in \sY_0} Y_\ss\supset Y_\se \setminus W^s(A;f)$ into $Y_\se$. Consequently the composition $\underline \sb = \tilde \sigma\circ \underline \sa\circ h^{-1}$ is continuous from $\Lambda$ into $\sL^s$. 


Hence for an elementary closed set of the form $C:=\prod_{i<0} V\times 
\prod_{i=0}^m  V_i\times \prod_{i>m} V\cap \Omega_G'$, with $m>0$ and $(V_i)_i$ closed (finite) subsets of $V$, it comes from the continuity of $\underline \sb$ that
$i(C)$ is a closed subset of $\Lambda$. It is also a closed subset of $\cup_{n\ge 0} f^n(\Lambda)$ by the following Claim:
\begin{Claim}
$\Lambda$ is closed in $\cup_{n\ge 0} f^n(\Lambda)$.\end{Claim}
\begin{proof} It is sufficient to prove that $R$ is closed in   $\cup_{n\ge 0} f^n(R)$ by the continuity properties of $h$. For this end, we recall that the closure of $R$ is included in $\tilde {\mathcal R}\cup W^s(A)$, by Claim \ref{pour scs}. Hence by Proposition \ref{lift} the intersection $cl(R)\cap f^k(R)$ is included in $R$ for every $k\ge 0$.\end{proof}

Also for an elementary closed set of the form $C:=\prod_{i<-m} V\times 
\prod_{i=-m}^m  V_i\times \prod_{i>m} V\cap \Omega_G'$, with $m>0$ and $(V_i)_i$ closed (finite) subsets of $V$, it comes from the continuity of $\sigma^{-1}$ that 
$\sigma^m(C)$ is a closed subset of $\Omega_G'$, which is of the latter form, and so 
$i\circ \sigma^m(C)$ is a closed set. As $f$ is a diffeomorphism; 
$f^{-m}\circ i\circ \sigma^m(C)$ is a closed subset of $\cup_n f^n(\Lambda)$. By commutativity of the diagram, $i(C)$ is a closed subset of $\cup_n f^n(\Lambda)$. Thus $i$ is closed, and so its inverse is continuous. 
\end{proof} 
 
The proof of Proposition \ref{lift} is combinatorial and geometric. It needs a few notions.
\subsection{Definition and properties of the division}

A useful tool introduced in \cite{berhen}  is the right division on the words of the puzzle algebra  $\sG$ (not to be mistaken with $\sS$).

First let us recall that by $(SR_2)$, for every $t\in T^*$ and every $j\ge 0$, 
the common puzzle piece $\sc_j^t(S^{t\!t})$ of depth $j$ is given by a concatenation of complete $\sA$-chains $\underline \sc_j^t= \underline \sa_1  \cdots  \underline \sa_j$.  


The chain $\underline \sc_j^t$ is complete,  suitable from $S^{t\! t}$ and even regular. The word $\underline \sc_j^t$ is called the \emph{$\sA$-spelling} of the puzzle piece $\sc_j ^t(S^{t\!t})$. 
 

We say that $\underline \sa\in \sG$ is \emph{(right) divisible}\index{Right divisibility $/$} by $\underline \sa'\in \sG$ and we wright $ \underline \sa/\underline \sa'$ if  one of the following conditions hold:
\begin{itemize}  
\item[$(D_1)$] ${ \underline \sa}=\underline \sa'$ or $\underline \sa'=\se$, 
\item[$(D_2)$] $ \underline \sa$ is of the form $ \square_\pm (\sc_l-\sc_{l+1})$ and satisfies $ \underline \sc_l/\underline \sa'$, with $ \underline \sc_l$ the $\sA$-spelling of $\sc_l$,
\item[$(D_3)$] there are splittings $ \underline \sa=\underline \sa_3\cdot  \sa_2\cdot \underline \sa_1$ and $\underline \sa'=\underline \sa'_2\cdot \underline \sa_1$ into words $\underline \sa_1, \underline \sa'_2, \underline \sa_3\in\sA^{(\mathbb N)}$ and  $\sa_2\in \sA$,  such that $\sa_2/\underline \sa_2'$ and $n_{\underline \sa_1}+n_{\underline \sa_3}\ge 1$.   
\end{itemize}  

The two last conditions are recursive but the recursion decreases the order $n_{ \underline \sa}$. Thus the right divisibility is well defined by induction on $n_{ \underline \sa}$. 


In Proposition 5.14 of \cite{berhen}, we showed:

\begin{prop}\label{propdediv} The right divisibility  $/$ is an order relation on $\sG$. Moreover for all $\underline \sa$, $\underline \sa'$, $\underline \sa''\in \sG$:
\begin{enumerate}
\item If $\underline \sa/\underline \sa'$ then $n_{\underline \sa}\ge n_{\underline \sa'}$, with equality iff $\underline \sa=\underline \sa'$.
\item If $\underline \sa/\underline \sa'$, $\underline \sa/\underline \sa''$ and $n_{\underline \sa'}\ge n_{\underline \sa''}$ then $\underline \sa'/\underline \sa''$.
\end{enumerate}
\end{prop}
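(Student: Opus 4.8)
Since Proposition \ref{propdediv} is recalled from \cite{berhen} (Prop.~5.14), its proof can simply be cited; nevertheless here is the argument I would carry out. The plan is to prove all four assertions simultaneously by induction on the order $n_{\underline a}$ of the dividend, using that each recursive clause $(D_2)$, $(D_3)$ strictly lowers that order, so that within a fixed order one may freely invoke any already-established part at strictly smaller orders. Reflexivity is immediate from $(D_1)$; the content is in (1), antisymmetry, transitivity, and (2).

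First I would prove (1). Fix a clause witnessing $\underline a/\underline a'$. Under $(D_1)$ nothing is needed once one observes that $e$ is the unique word of order $0$ (every nonempty word has order $\ge 2$ by $\hat G_7$ and the shape of parabolic symbols), so $\underline a'=e$ gives $n_{\underline a'}=0\le n_{\underline a}$. Under $(D_2)$, $n_{\underline a}=M+1+n_{c_l}>n_{c_l}=n_{\underline c_l}\ge n_{\underline a'}$, the last step by the induction hypothesis applied to $\underline c_l/\underline a'$ (valid since $n_{\underline c_l}<n_{\underline a}$), and it is strict. Under $(D_3)$, with $\underline a=\underline a_3\cdot\underline a_2\cdot\underline a_1$, $\underline a'=\underline a'_2\cdot\underline a_1$, $\underline a_2/\underline a'_2$ and $n_{\underline a_3}+n_{\underline a_1}\ge 1$, one has $n_{\underline a_2}<n_{\underline a}$, so the induction hypothesis gives $n_{\underline a'_2}\le n_{\underline a_2}$ (equality iff $\underline a'_2=\underline a_2$); hence $n_{\underline a'}\le n_{\underline a_2}+n_{\underline a_1}\le n_{\underline a}$, with equality forcing $\underline a_3=e$ and $\underline a'_2=\underline a_2$, i.e. $\underline a'=\underline a$. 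This yields the ``equality iff'' clause, and antisymmetry follows at once: $\underline a/\underline a'$ and $\underline a'/\underline a$ force $n_{\underline a}=n_{\underline a'}$, hence $\underline a=\underline a'$.

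For transitivity I would assume $\underline a/\underline a'$ and $\underline a'/\underline a''$ and argue $\underline a/\underline a''$ by cases on the witnessing clauses. If $(D_1)$ witnesses either relation (one of $\underline a=\underline a'$, $\underline a'=e$, $\underline a'=\underline a''$, $\underline a''=e$) the claim is trivial. If $(D_2)$ witnesses $\underline a/\underline a'$, so $\underline a=\square_\pm(c_l-c_{l+1})$ and $\underline c_l/\underline a'$, then $n_{\underline c_l}<n_{\underline a}$ lets the induction hypothesis give $\underline c_l/\underline a''$, whence $(D_2)$ gives $\underline a/\underline a''$. The delicate case is $(D_3)$ witnessing both: one then has two head/middle/tail factorizations of the intermediate word $\underline a'$, namely $\underline a'=\underline a'_2\cdot\underline a_1$ from $\underline a=\underline a_3\underline a_2\underline a_1$ and $\underline a'=\underline b_3\underline b_2\underline b_1$ from $\underline a''=\underline b'_2\underline b_1$; comparing the tails $\underline a_1$ and $\underline b_1$ (one a suffix of the other) splits this into sub-cases, in each of which one regroups the factors into a factorization of $\underline a$ over a factorization of $\underline a''$ and invokes the induction hypothesis on the strictly shorter middle factors together with (2) to align the divisors. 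This combinatorial reconciliation is what I expect to be the main obstacle.

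Finally, for (2): assume $\underline a/\underline a'$, $\underline a/\underline a''$, $n_{\underline a'}\ge n_{\underline a''}$. If $(D_1)$ witnesses $\underline a/\underline a'$ then $\underline a'\in\{\underline a,e\}$ and the claim is immediate ($\underline a'=\underline a$ gives it outright; $\underline a'=e$ forces $n_{\underline a''}=0$, hence $\underline a''=e$ and $\underline a'/\underline a''$). If $(D_2)$ witnesses $\underline a/\underline a'$ then $n_{\underline a'}<n_{\underline a}$, so $\underline a''\ne\underline a$; if moreover $\underline a''\ne e$, then since $\underline a$ is a single parabolic symbol the relation $\underline a/\underline a''$ can only be witnessed by $(D_2)$, giving $\underline c_l/\underline a'$ and $\underline c_l/\underline a''$ with $n_{\underline c_l}<n_{\underline a}$, and the induction hypothesis finishes; the case $\underline a''=e$ is trivial. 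In the $(D_3)$--$(D_3)$ case one again reconciles the two factorizations of $\underline a$ against each other, using (1) to keep orders in the range where the induction hypothesis applies, and then applies it to the shorter middle factors. In short, the only real work in both transitivity and (2) is this $(D_3)$--$(D_3)$ bookkeeping: two a priori unrelated factorizations of one word must be shown to admit a common refinement compatible with the divisibility data, with the order accounting always leaving a strictly smaller instance for the induction.
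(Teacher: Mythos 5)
The paper itself does not prove this proposition; it cites Proposition 5.14 of \cite{berhen}, so there is no in-paper argument to compare against, and the question is simply whether your sketch closes. Your inductive scaffolding is the right one, and the parts you work out are sound: the simultaneous induction on $n_{\underline a}$, the verification of (1) in each of $(D_1)$, $(D_2)$, $(D_3)$ (using the constraint $n_{\underline a_3}+n_{\underline a_1}\ge 1$ to force $n_{\underline a_2}<n_{\underline a}$), antisymmetry as a corollary, and the $(D_1)$/$(D_2)$ cases of transitivity and of (2) (including the observation that a single parabolic symbol admits no nontrivial $(D_3)$ witness) are all correct.

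The gap is exactly where you flag it, but I would push back on characterising the $(D_3)$--$(D_3)$ case as mere bookkeeping that one ``expects'' to work. The difficulty is not just aligning the two tails $\underline a_1$ and $\underline b_1$: after regrouping, the candidate middle factor for a direct $(D_3)$ witness of $\underline a/\underline a''$ can have order equal to $n_{\underline a}$, in which case the induction hypothesis is unavailable. Concretely, in the transitivity proof take $\underline a=\underline a_3\underline a_2\underline a_1$, $\underline a'=\underline a'_2\underline a_1$ with $\underline a_2/\underline a'_2$, and $\underline a'=\underline b_3\underline b_2\underline b_1$, $\underline a''=\underline b'_2\underline b_1$ with $\underline b_2/\underline b'_2$; in the regime where $\underline b_1$ is a strict suffix of $\underline a_1$, say $\underline a_1=\underline c\cdot\underline b_1$, and where moreover $\underline a_3=e$, $\underline b_1=e$, and $\underline c$ is a suffix of $\underline b_2$, the natural middle factor one wants to feed to the IH is $\underline a_2\cdot\underline c$, which is all of $\underline a$. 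Chaining $\underline a/(\underline b_3\underline b_2)/\underline b'_2$ via transitivity is also circular, since $\underline a$ is the full word. Resolving this requires unwinding the nested $(D_3)$ witness of $\underline b_2/\underline b'_2$ itself (and possibly iterating), and one then needs some auxiliary measure to show the unwinding terminates --- for instance a secondary induction on the depth of the derivation tree of the division relation, or a stronger lemma such as ``$\underline a_2/\underline d$ and $(\underline d\cdot\underline c)/\underline b'_2$ with $\underline c\ne e$ imply $(\underline a_2\cdot\underline c)/\underline b'_2$'' proved by its own induction. As written, your induction on $n_{\underline a}$ alone does not obviously close in these degenerate sub-cases, and this is the one place where a reader could not reconstruct the argument from your text; the same issue recurs in the $(D_3)$--$(D_3)$ case of statement (2).
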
 
This allows us to define:
\begin{defi}
The greatest common divisor of $\underline \sa$  and $\underline \sa'$ is the element 
$\underline \sd\in \sG$ dividing both $\underline \sa$ and $\underline \sa'$ with maximal order. 

We write $\underline \sd=: \underline \sa\wedge \underline \sa'$. 
For all $\sa\in \sG$, we put $\nu(\sa,\sa)=n_{\underline \sa\wedge \underline \sa'}$.
\end{defi}

As any $t,t'\in T^*$ are presenquences $t = (\sa_i)_{i\le -1},t' = (\sa'_i)_{i\le -1}\in \sA^{\mathbb Z^-}$, we can define:
\[\nu(t,t')=\sup_{m\le -1} \nu((\sa_i)_{m\le i\le -1},(\sa_i)_{m\le i\le -1})\in\mathbb N\cup\{\infty\}\;.\]

The first application is the following:
\begin{prop}[\cite{berhen} Lemm. 6.1, Prop. 5.17]\label{5.17}
The $C^{1+Lip}$-distance between two curves $S^t, S^{t'}$ is dominated by $b^{\nu(t,t')/4}$, for all $t,t'\in T^*$.

Moreover it holds $\underline \sc_j^t = \underline \sc_j^{t'}$ for every  $j\le \Xi(M+1+\nu(t,t'))$. 
\end{prop}



An application of this proposition is the following:
\begin{coro}\label{coroporuProp3.2}
For every $t\cdot \underline \sg\in T^*$, with $t\in T_0$ and $\underline \sg\in \sA^{(\mathbb N)}$ suitable and complete from $S^t$, for every $\underline  \sa=(\sa_i)_{i\ge 0}\in \tilde \sR$, for every $k\ge 0$, both $\sg \cdot (\sa_i)_{i= 0}^k$ and $\sg \cdot (\sa_{i+1})_{i= 0}^k$ are suitable from $S^{t}$.
In particular, both  $(\sa_i)_{i= 0}^k$ and $(\sa_{i+1})_{i= 0}^k$ are suitable from $S^{t\cdot \sg}$.
\end{coro}
\begin{proof}
First let us observe that  by Proposition \ref{completearepuzzlepiece},  $\underline \sg(S^t)$ is a puzzle piece, thus   $Y_{\sa_1\cdots \sa_k}$ and $Y_{\sa_0\cdots \sa_k}$ intersect $f^{n_\sg}(S^t_{\sg})$ at a non trivial segment. Hence it suffices to show that $(\sa_i)_{i= 0}^k$ and $(\sa_{i+1})_{i= 0}^k$ are suitable from $S^{t\cdot \sg}$. Put ${t'}= t\cdot \sg$.

By definition of the regular sequences, $n_{\sa_0}\le M$ and $n_{\sa_1}$ is of order at most $M+M\Xi$ which is small w.r.t. $\Xi(M+1)\le \Xi(M+1+\nu(t,t\! t\cdot \sa_0))$.  
 Hence $\sa_0$ is in $\sY_0$ and so is suitable from $S^{t'}$, and $\sa_1$ is either in $\sY_0$ (and so is suitable from $S^{t'}$)
  or $\sa_1$ is of the form  $\square_\pm   (\sc^{t\! t\cdot \sa_0}   _i  -\sc^{t\! t\cdot \sa_0}_{i+1})$ with $i+1\le \Xi(M+1+\nu(t,t\! t\cdot \sa_1))$. By Proposition  \ref{5.17}, it comes that $\sc^{t\! t\cdot \sa_0}_i$ and $\sc^{t\! t\cdot \sa_0}_{i+1}$ are equal to respectively $\sc^{{t'}\cdot \sa_0}_i$ and $\sc^{{t'}\cdot \sa_0}_{i+1}$. Thus $\sa_1$ is suitable from $S^{t'}$. 
  
  By induction on $k\ge 0$, we assume that both $(\sa_i)_{i= 0}^k$ and $(\sa_i)_{i= 1}^k$
  are suitable from $S^t$. We recall that $n_{\sa_{k+1}}\le M+\Xi \sum_{i=0}^k n_{\sa_i}$. But $\nu({t'}\cdot \sa_1\cdots \sa_k , t\! t\cdot \sa_1\cdots \sa_k)$ and 
 $\nu({t'}\cdot \sa_0\cdots \sa_k , t\! t\cdot \sa_1\cdots \sa_k)$ are both greater than  
$\sum_{i=1}^k n_{\sa_i}$. Thus applying again Proposition \ref{5.17}, we get that both $(\sa_i)_{i= 0}^{k+1}$ and $(\sa_i)_{i= 1}^{k+1}$  are suitable from $S^{t'}$.
  \end{proof}

\begin{rema}\label{remap40}
The same proof shows that if $\underline \sa\in \sG$ is suitable from a certain $S^t$, $t\in T^*$, and satisfies $(\dag)$, then $\underline \sa$ belongs to $\tilde {\sR}$.
\end{rema}
 A geometric consequence of the division is the following Lemma:

\begin{lemm}\label{premier lemm de division}
For any regular chains $\underline \sa, \underline \sb\in \sA^{(\mathbb N)}$,  
if $\underline \sa/\underline \sb$, then $f^{n_{\underline \sa}}(Y_{\underline \sa})\subset f^{n_{\underline \sb}}(Y_{\underline \sb})$.
\end{lemm}

\begin{proof}We proceed by induction on $n_{\underline \sa}$. 
If $\underline a\in \sY_0$ or $\underline \sb=\se$ then $\underline \sb\in \{\underline \sa,\se\}$. The inclusion is clear.

Otherwise $\underline \sa=\underline \sa_1\cdot \sa_2$, with $\underline \sa_1$ a regular non empty chain and $\sa_2\in \sA$. Let $\underline \sb=\underline \sb_1\cdot \sb_2$ with $\underline \sb_1$ possibly equals to $\se$ and $\sb_2\in \sA$ (and so $\not = \se$). 
As $\underline \sa$ has at least two letters, the rule $(D_2)$ cannot apply directly.   

If $\sa_2\not=\sb_2$, then $(D_1)$ cannot apply directly. It remains only $(D_3)$ with an empty last letter. It implies  $\sa_2/\underline \sb$. As $\underline \sb$ is not $\se$ nor $\sa_2$, it comes that $ \sa_2$ is of the form $\square_\pm(\sc_i-\sc_{i+1})$. At this step we can only apply $(D_2)$ which gives $\underline \sc_i/\underline \sb$. Thus $\underline \sc_i$ and $\underline \sb$ are regular chains, by induction $f^{n_{\underline \sc_i}}( Y_{\underline \sc_i})\subset f^{n_{\sb}} (Y_{\underline \sb})$. As $f^{M+1+n_{\underline a_1}}( Y_{\underline \sa})\subset Y_{\underline \sc_i}$, it holds $f^{n_{\underline a}}(Y_{\underline \sa})\subset f^{n_{\underline \sb}} (Y_{\underline \sb})$.

If $\sa_2=\sb_2$, then $\underline \sb_1$ is a regular chain and by definition of the division (third item), it comes that   $\underline \sa_1/\underline \sb_1$.
 Thus by induction $f^{n_{\underline \sa_1}}(Y_{\underline \sa_1})\subset f^{n_{\underline \sb_1}}(Y_{\underline \sb_1})$. Looking at the image by $f^{n_{\sa_2}}|Y_{\sa_2}=f^{n_{\sb_2}}|Y_{\sb_2}$ of this inclusion, we get the requested inclusion.
\end{proof}

\subsection{Proof of the first return time property (Proposition \ref{lift})}
\label{Proofof proplift}
Let $x\in \cap_{n\ge 0} (f^\mathcal R)^n( {\mathcal R})= R$. Let $N$ be the first return time of $x\in R$ in $\tilde{\mathcal R}$. We recall that $N_R(x)\ge N$ denote the symbolic return time. We want to show $N=N_R(x)$.

Let $\underline \sa(x)= \sb_0\cdots \sb_i\cdots$. Let $i+1$ be minimal such that with $\underline \sb= \sb_0\cdots \sb_{i+1}$, it holds $n_{\underline \sb}\ge N$. We have $N\in (  \sb_0\cdots \sb_{i}, \sb_0\cdots \sb_{i+1}]$.

 We notice that if $N=n_{\underline \sb}$, then by Corollary \ref{coroporuProp3.2}, $\underline \sa(x)= \underline \sb\cdot \underline \sa(f^N(x))$ and so $n=N_R(x)$. 

Hence we can suppose for the sake of a contradiction that $N\in (  n_{\sb_0\cdots \sb_{i}}, n_{\sb_0\cdots \sb_{i+1}})$.


 Put $x'= f^N(x)$. Let $x''\in R$ be a symbolic  backward  return of $x'$: there exists a regular chain $\underline \sa$ so that  
 $x':=f^{n_{\underline \sa}}(x'')$. As there are infinitely many such backward returns, we can suppose $n_{\underline \sa}\ge N$. Put $n:= n_{\underline \sa}-N\ge 0$. 
 
 Observe that $f^n(Y_{\underline \sa}) \cap Y_{\underline \sb}\ni x$. Also $x\notin W^s(A)$ is not in the boundary of $Y_{\underline \sb}$.

Thus we can apply the following lemma:
\begin{lemm}\label{lemme qui torche} Let $\underline  \sa,\; \underline \sb$ be regular chains  and let $n\in [0,n_{\underline \sa}]$ be such that  $f^n(Y_{\underline \sa}) \cap int\,Y_{\underline \sb}\not= \varnothing$ and $n_{\underline \sb}+n\ge  n_{\underline \sa}$. Then 
there exists a (possibly empty) regular chain $\underline \sa'$   such that: 
\[\underline \sa/\underline \sa' \quad\mathrm{and}\quad n+n_{\underline  \sa'}=n_{\underline \sa},\]
 and $\underline \sb$ starts by $\underline \sa'$ (i.e. $\underline \sb=\underline \sa'\cdot \underline \sb'$, with  $\underline \sb'\in \sA^{(\mathbb N)}\cup \{\se\}$).  
\end{lemm}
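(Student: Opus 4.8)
The plan is to induct on $n_{\underline a}$, mirroring the recursive structure of the right divisibility relation $/$ and the geometric inclusion Lemma \ref{premier lemm de division}. The base case is $\underline a\in\sY_0$: then $n\in\{0,n_{\underline a}\}$, and one takes $\underline a'=\underline a$ (if $n=0$) or $\underline a'=e$ (if $n=n_{\underline a}$); in the first case $f^0(Y_{\underline a})=Y_{\underline a}$ meets the interior of $Y_{\underline b}$, which for $\underline a,\underline b\in\sR$ and a simple symbol $\underline a$ forces $\underline b$ to begin with $\underline a$ by the box geometry (two distinct simple boxes $Y_s,Y_{s'}$ have disjoint interiors). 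For the inductive step, write $\underline a=\underline a_1\cdot a_2$ with $\underline a_1\in\sR$ and $a_2\in\sA$; since $\underline a\notin\sY_0$ we may assume $a_2$ is parabolic, of the form $\square_\pm(c_\ell-c_{\ell+1})$, and $n_{a_2}=M+1+n_{c_\ell}$.

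I would split on whether $n\le n_{\underline a_1}$ or $n>n_{\underline a_1}$. If $n\le n_{\underline a_1}$: the inclusion $f^n(Y_{\underline a})\subset f^n(Y_{\underline a_1})$ gives $f^n(Y_{\underline a_1})\cap\mathrm{int}\,Y_{\underline b}\neq\varnothing$, and since $n_{\underline b}+n\ge n_{\underline a}> n_{\underline a_1}$ the induction hypothesis applied to $(\underline a_1,\underline b,n)$ yields $\underline a_1'\in\sR\cup\{e\}$ with $\underline a_1/\underline a_1'$, $n+n_{\underline a_1'}=n_{\underline a_1}$, and $\underline b=\underline a_1'\cdot\underline b'$. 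One then sets $\underline a':=\underline a_1'\cdot a_2$ (when $\underline a_1'\neq e$) and checks $\underline a=\underline a_1\cdot a_2 / \underline a_1'\cdot a_2=\underline a'$ via clause $(D_3)$ of the division with $\underline a_1=\underline a_3\cdot\underline a_2^{\mathrm{div}}$, $\underline a_1'=\underline a_2'^{\mathrm{div}}$, and the order bookkeeping $n+n_{\underline a'}=n+n_{\underline a_1'}+n_{a_2}=n_{\underline a_1}+n_{a_2}=n_{\underline a}$; and $\underline b$ starts with $\underline a'$ provided $\underline b'$ starts with $a_2$, which I extract from the geometry of $f^{n_{\underline a_1'}}(Y_{\underline a_1'})$ meeting $Y_{\underline b}$ together with the parabolic-piece construction, arguing as in the proof of Proposition \ref{caracterisationdessequences}. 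When $\underline a_1'=e$ one simply takes $\underline a'=\underline a$.

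If $n>n_{\underline a_1}$: then the return is happening inside the parabolic block of $a_2$, so $n-n_{\underline a_1}\in(0,n_{a_2}]$. Here I would use that $f^{n_{\underline a_1}}(Y_{\underline a})\subset Y_\square$ and that $f^{M+1}$ carries $Y_\square$ into $Y_e$ near the curve $S^\square$, which by $(SR_1)$ is tangent to $W^s_{c(t)}$; concretely, $f^{n_{\underline a_1}+M+1}(Y_{\underline a})\subset Y_{\underline c_\ell}$. The hypothesis $n_{\underline b}+n\ge n_{\underline a}=n_{\underline a_1}+M+1+n_{c_\ell}$ then says $n_{\underline b}\ge n_{c_\ell}-(n-n_{\underline a_1}-M-1)$. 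If $n\ge n_{\underline a_1}+M+1$, I apply the induction hypothesis to $(\underline c_\ell,\underline b, n-n_{\underline a_1}-M-1)$ — legitimate since $n_{\underline c_\ell}<n_{\underline a}$ — to get $\underline a'$ with $\underline c_\ell/\underline a'$, hence $\underline a=\square_\pm(c_\ell-c_{\ell+1})/\underline a'$ by clause $(D_2)$, and the time identity $n+n_{\underline a'}=n_{\underline a_1}+M+1+(n-n_{\underline a_1}-M-1)+n_{\underline a'}= n_{\underline a}$ after substituting $n_{\underline c_\ell}$; and $\underline b$ starts with $\underline a'$ from the induction. If instead $n_{\underline a_1}<n\le n_{\underline a_1}+M+1$, then $f^n(x)$ sits in the ``parabolic passage'' before the curve stretches across $Y_e$, so $f^n(Y_{\underline a})$ cannot meet the interior of a box $Y_{\underline b}$ of a regular word $\underline b$ unless $n=n_{\underline a_1}+M+1$ exactly — this rules out most of this range, and the surviving boundary case $n=n_{\underline a_1}$ (where one would take $\underline a'=a_2$, i.e. $\underline c_\ell$ absorbed, giving a contradiction with strict inequality in the application) is exactly the contradiction the calling argument in Proposition \ref{lift} wants.

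The main obstacle I anticipate is the bookkeeping in the $n>n_{\underline a_1}$ branch: correctly tracking that a return landing strictly inside the $(M+1)$-step parabolic segment forces $f^n(Y_{\underline a})$ to be a very thin, nearly-vertical strip that cannot stretch across any regular box, so no valid $\underline b\in\sR$ with $n_{\underline b}+n\ge n_{\underline a}$ exists there. This requires the quantitative control from Proposition \ref{prop des common extension} (inequality $(\mathcal{PCE}^{n_{c_\ell}+M+1})$) on the non-expansion of $(0,1)$ along the first $k\le M+1+n_{c_\ell}$ iterates, plus the Proposition \ref{assemblage facile}/\ref{geoYg} description of box geometry, to show the horizontal width of $f^n(Y_{\underline a})$ stays below any threshold needed for ``stretches across.'' Everything else is a mechanical unwinding of $(D_1)$--$(D_3)$ and Proposition \ref{propdediv}, combined with Lemma \ref{premier lemm de division} and the now-standard ``same argument as in Proposition \ref{caracterisationdessequences}'' for upgrading a geometric intersection to a spelling-prefix statement.
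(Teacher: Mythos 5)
Your induction mirrors the paper's proof almost exactly---induct on $n_{\underline a}$, treat $\underline a\in\sY_0$ as the base case, write $\underline a=\underline a_1\cdot a_2$, split on $n\le n_{\underline a_1}$ versus $n>n_{\underline a_1}$, and in the parabolic case recurse on the $\sA$-spelling of the common piece $c_\ell$. The bookkeeping, the verification that the induction hypothesis carries over (intersection and order hypotheses for $\underline c_\ell$), and the unwinding of $(D_2)$--$(D_3)$ are all on the right track. Two small slips: ``since $\underline a\notin\sY_0$ we may assume $a_2$ is parabolic'' is unjustified ($a_2$ can be simple, e.g.\ $\underline a=s\cdot s'$ with $s,s'\in\sY_0$; the paper disposes of this in the $n>n_{\underline a_1}$ branch via $\hat G_1$, forcing $n=n_{\underline a}$ and $\underline a'=e$, while the $n\le n_{\underline a_1}$ branch is insensitive to the type of $a_2$); and ``when $\underline a_1'=e$ one takes $\underline a'=\underline a$'' is false unless $n=0$---the correct choice there is $\underline a'=a_2$.

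The genuine gap is the step where you need $\underline b'$, the tail of $\underline b$ after $\underline a_1'$, to start with $a_2$. You defer this to ``the parabolic-piece construction, arguing as in the proof of Proposition \ref{caracterisationdessequences}'', but that is not the mechanism and would not close the argument. The paper's argument is specific and uses the division relation in an essential way: since $\underline a_1/\underline a_1'$, Proposition \ref{5.17} guarantees that the curves $S^{t\!t\cdot\underline a_1}$ and $S^{t\!t\cdot\underline a_1'}$ are $C^1$-close and that their suitable parabolic symbols of order up to $M+\Xi n_{\underline a_1'}$ coincide; since $\underline b\in\sR$ the first post-prefix symbol $b_2$ has order at most $M+\Xi n_{\underline a_1'}$, so $b_2\in\sP(t\!t\cdot\underline a_1)$; an explicit box computation then shows $f^n(Y_{\underline a})\cap Y_{\underline a_1'\cdot b_2}\subset f^{-n_{\underline a_1'}}(Y_{a_2}\cap Y_{b_2})$, forcing $\mathrm{int}\,(Y_{a_2}\cap Y_{b_2})\not=\varnothing$; and the partition property of $\mathcal P(t\!t\cdot\underline a_1')$ concludes $a_2=b_2$. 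The transfer of the symbol $b_2$ from the $\underline a_1'$-curve to the $\underline a_1$-curve via Proposition \ref{5.17} is the crux, and it depends precisely on the divisibility $\underline a_1/\underline a_1'$ that the induction produces---this is where the division order earns its keep beyond formal bookkeeping, and it is the one ingredient your outline does not identify.
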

Therefore $\underline \sa'$ is of the form $\sb_0\cdots \sb_k$, with 
 $n+n_{\sb_0}+\cdots +n_{\sb_k} =n_{\underline \sa}$.
 But $n= n_{\underline \sa}-N$. Thus $n_{\sb_0}+\cdots +n_{\sb_k}=N$. A contradiction.

\begin{proof}[Proof of Lemma \ref{lemme qui torche}]
We proceed by induction on $n_{\underline \sa}$ to show the existence of such an $\underline a'\in \sR$. 

If $\underline \sa\in \sY_0$ then either $n=0$ or $n=n_{\underline \sa}$; take $\underline \sa'=\underline \sa$ or $\se$ respectively. 

 The case $\underline \sa=\square_\pm (\sc_i-\sc_{i+1})$ does not occur since $\underline \sa\in \sR$. 

Let $\underline \sa=\underline \sa_1\cdot \sa_2$ be with $\sa_2\in \sA$ and $\underline \sa_1\in \sA^{(\mathbb N)}$. If $n>n_{\underline \sa_1}$, then 
either $\sa_2\in \sY_0$ and $n=n_{\underline \sa}$; either $\sa_2=\square_\pm (\sc_i-\sc_{i+1})$ and we can use the induction hypothesis on $\underline \sc_i$ which  is regular.

If $n\le n_{\underline \sa_1}$, then by induction there exists a regular chain $\underline \sa_1'$ such that $n+n_{\underline \sa_1'}=n_{\underline \sa_1}$ and $\sa_1/ \sa_1'$. Furthermore, the regular chain $\underline \sb$ starts by $\underline \sa_1'$: there exists $\underline \sb'\in \sA^{(\mathbb N)}$ such that $\underline \sb= \underline \sa_1'\cdot \underline \sb'$. 

Let $\sb_2\in \sA$ be the first letter of $\underline \sb'$. We want to show that $\sb_2=\sa_2$ since $\underline \sa'= \underline \sa_1'\cdot \sa_2$ divides $\underline \sa$ and satisfies $n+n_{\underline \sa'}=n_{\underline \sa}$. 

As $\underline \sb:= \underline \sa_1'\cdot \sb_2\cdots $ is regular, the $\sA$-letter $\sb_2$ is suitable from  $S^{t\!t\cdot \underline \sa_1'}$ and $n_{\sb_2}\le M+\Xi n_{\underline a_1'}$.

As $\underline \sa_1/\underline \sa_1'$, by Proposition \ref{5.17}, the parabolic pieces of $S^{t\!t\cdot \underline \sa_1}$ and $S^{t\!t\cdot \underline \sa_1'}$ with order less than $M+ \Xi n_{\underline \sa_1'}$  are pairwise the same. In particular, $\sb_2$ is suitable from  $S^{t\!t\cdot \underline \sa_1}$ and $\sb_2\in \sP(t\! t \cdot \underline \sa_1)$.

Also  $f^n(Y_{\underline \sa})$ intersects the interior of $Y_{\underline \sb}\subset Y_{{\underline \sa'_1}\cdot  \sb_2}$ and  
\[ f^{n}(Y_{\underline \sa})\cap Y_{{\underline \sa'_1}\cdot  \sb_2}=f^n(Y_{\underline \sa_1}\cap f^{-n_{\underline \sa_1}}(Y_{\sa_2})) \cap Y_{\underline \sa_1'}\cap f^{-n_{\underline \sa_1'}}(Y_{\sb_2})\subset f^{-n_{\underline \sa_1'}}(Y_{\sa_2}\cap Y_{\sb_2}).\]
Thus the interior of $Y_{\sa_2}\cap Y_{\sb_2}$ is not empty and so the partition property of $\mathcal P(t\!t\cdot \underline \sa'_1)$ implies $\sa_2=\sb_2$.\end{proof}

\section{Entropies and Hausdorff dimensions of exceptional sets}
\label{exception}

\subsection{The set $K_\square$}
\label{Ksquare}
We already defined the compact set $K_\square$ as the hyperbolic continuity of the following invariant set of $P^{M+1}$:
\[\check  K_\square:= \bigcap_{n\ge 0} P^{-n(M+1)}(\R_{\square_-(\se-\sc_1)}\cup \R_{\square_+(\se-\sc_1)}). \]
Following the position of the segment $R_{\sc_1}$ with respect to $\R_{\square}$, there are two possibilities: 
\begin{itemize}
\item Either $P^{M+1}|\R_{\square}$ does not intersect $\R_{\square}$, and then $\check  K_\square$ is empty.
\item Either $P^{M+1}|\R_{\square}$ intersects $\R_{\square}$. Then $P^{M+1}|\R_{\square_-(e-\sc_1)}$  (resp. $P^{M+1}|\R_{\square_-(e-\sc_1)}$) is a bijection onto its image which contains $\R_\square\supset \R_{\square_-(e-\sc_1)}\sqcup \R_{\square_+(e-\sc_1)}$.
\end{itemize}
By uniform expansion, in the latter case the entropy of  $P^{M+1}|\check  K_\square$ is equal to $\log 2$, and the number of fixed points of $P^{(M+1)n}|\check  K_\square$ is $2^n$, for every $n\ge 0$. 

By hyperbolic continuity, the same holds for $f^{M+1}|K_\square$. This implies that the following properties for the $f$-invariant set $\hat K_\square:= \cup_{n=0}^{M}f^n(K_\square)$:
\begin{prop}\label{hKsquare}
The entropy of $f$ restricted to $\hat K_\square$ is at most $\log(2)/(M+1)$. The number of fixed points of $f^n$ in $\hat K_\square$ is at most $(M+1)2^{n/(M+1)}$, for every $n\ge 0$.
\end{prop}

By the variational principle, every ergodic probability measure supported by $\hat K_\square$ has an entropy smaller than $\log(2)/(M+1)$, which is small since $M$ is large. 
Hence such measures do not need to be studied to prove Theorem \ref{Main}.

\subsection{The exceptional set $\mathcal E$}
\label{setE}

By Propositions \ref{eventually regu} and \ref{hKsquare}, it comes:
\begin{Claim}\label{ClaimRE}
For every invariant, ergodic  measure $\mu$ with entropy greater than $\log(2)/(M+1)$, it holds  $\mu(\mathcal R)>0$ or $\mu(\mathcal E)>0$.
\end{Claim}

In order to show that the invariant probability measures satisfying $\mu(\mathcal E)>0$ have small entropy we show in \textsection \ref{appendixA1}:
\begin{prop}\label{HDexceptionel}
For every curve $\gamma$ transverse to the stable direction of $\mathcal E$, the Hausdorff dimension of $\gamma \cap E$ is at most $1/\sqrt{M}$ .
 \end{prop}

\begin{coro}\label{EntropysurE} Any ergodic probability measure $\mu$ so that $\mu(\mathcal E)>0$ has an entropy at most $\log 4/ \sqrt M$ 
\end{coro}
\begin{proof}[Proof of Corollary \ref{EntropysurE}] 
%

By ergodicity, the support of $\mu$ is supported by the orbit of $\mathcal E$.


The idea is to use 
 Ledrappier-Young formula. 
First, as the measure $\mu$ is hyperbolic, for $\mu$-almost every $x$, we can define the conditional measure $\mu_x$ associated to a Pesin local unstable manifold of $x$. Then Ledrappier-Young formula (Thm C  \cite{LYII85}) states that for $\mu$-almost every $x$, 
with $W^u_{\epsilon}(x)$ the Pesin unstable local manifold of diameter $\epsilon$ and $\lambda_u$ the unstable Lyapunov exponent of $\mu$, it holds:
$$h_\mu =\limsup_{\epsilon\to 0}  \frac{\log \mu_x( W^u_{\epsilon}(x))}{\log \epsilon} \cdot \lambda_u\, .$$

As Lebesgue differentiation Theorem holds for every finite Borelian measure (for a proof see for instance \cite{LYI85}), for $\mu$-a.e. $x\in \mathcal E$, the measure $\mu(\mathcal E\cap W^u_{\epsilon}(x)))$ is equivalent to $\mu( W^u_{\epsilon}(x))$ as $\epsilon\to 0$. Hence for $\mu$ a.e. $x\in \mathcal E$, it holds: 
 $$h_\mu =\limsup_{\epsilon\to 0}  \frac{\log \mu( W^u_{\epsilon}(x)\cap \mathcal E)}{\log \epsilon} \cdot \lambda_u\, .$$

By Prop 7.3.1 \cite{LYII85},  $\limsup_{\epsilon\to 0}  \frac{\log \mu( W^u_{\epsilon}(x)\cap \mathcal E)}{\log \epsilon}$ is bounded by the Hausdorff dimension of 
$W^u_{\epsilon}(x)\cap \mathcal E$. Consequently, it comes that:
$$
h_\mu \le d_{HD}(W^u_{\epsilon}(x)\cap \mathcal E) \cdot \lambda_u\, .$$
To achieve the proof of the Corollary, it suffices to bound the right hand terms of this inequality. 
First we recall that $\lambda_u$ is at most $\log 4$.  Also $\tilde {\mathcal R}$ is foliated by Pesin unstable manifolds, which are $\mu$ a.e. transverse to the Pesin stable manifold $W^u_\epsilon(x)$. Hence we deduce from Proposition \ref{HDexceptionel} that $d_{HD}(W^u_{\epsilon}(x)\cap \mathcal E)\le 1/\sqrt M$. Consequently $h_\mu \le \log 4/\sqrt M$.

%
%

\end{proof} 

Furthermore, in \textsection \ref{appendixA2}, we will show the following:
\begin{prop}\label{cardperE}
The number of fixed points of $f^n$ in $\cup_{n\ge 0} f^n(\mathcal E)$ is at most $ne^{n/\sqrt M}$.
\end{prop}


We are now ready to prove Proposition \ref{prop313}.  Let us show that for every $\overline \sb\in \Lambda^u$, 
$cl(\Lambda\cap  \gamma^u(\overline\sb))$ is the union of 
$\Lambda\cap  \gamma^u(\overline\sb)$ with a set of Hausdorff dimension at most $1/\sqrt{M}$. 
 \begin{proof}[Proof of Proposition \ref{prop313}]
 We recall that $R=h^{-1}(\Lambda)$ is included in $\tilde {\mathcal R}= \mathcal R\cup \mathcal E$. Moreover, by Claim \ref{pour scs}, 
 the closure of $\tilde {\mathcal R}$ is included in 
$\tilde {\mathcal R}\cup \bigcup_{\underline \sa\; \text{regular chain}}\partial^s Y_{\underline \sa}\; .$
 
As $h$ is a closed map from $\cup_{\ss\in \sY_0} Y_\ss\setminus \partial^sY_\ss$ into $Y_\se\setminus \partial^s Y_\se$, we have:
  \[cl(\Lambda) \subset h( {\mathcal R})\cup  h( {\mathcal E})\cup   \bigcup_{\underline \sa\; \text{weakly regular chain}}\partial^s Y_{\underline \sa}\; .\] 
By the geometry of the box of weakly regular chains (Prop. \ref{geoYg}), the set    $\bigcup_{\underline \sa\; \text{weakly regular chain}}\partial^s Y_{\underline \sa}$ is a countable union of curves which intersects the flat stretched curve $\gamma^u(\overline \sb)$ at a countable set. Hence this union has Hausdorff dimension equal to 0. 

On the other hand, $h( {\mathcal E})$ is an union of curves of the form $W^s_{\underline \sb}$, with $\underline \sb$ weakly regular. By the same Proposition, these curves intersect $\gamma^u(\overline\sb))$ uniformly transversally.  Hence $h^{-1}( \gamma^u(\overline\sb))$ is uniformly transverse to  $ {\mathcal E}$. Hence,  by Proposition \ref{HDexceptionel}, the Hausdorff dimension of $ {\mathcal E}\cap  h^{-1}(\gamma^u(\overline\sb))$ and so 
$h( {\mathcal E})\cap  \gamma^u(\overline\sb)$ is at most $1/\sqrt M$. 
   \end{proof}
\subsection{Proof of Theorem \ref{Main}}
Since the map $x\mapsto x^2-2$ contains a horseshoe of entropy close to $\log 2$, the same occurs for $f$ (for $M$ large and then $b$ small). Hence the number of fixed points of $f^n$ is at least $(2-\eta)^n$, for $\eta>0$ small. With respect to this quantity, the number of periodic points which intersect the exceptional set $\mathcal E$ or $K_\square$ is small (see Prop.  
\ref{hKsquare} and \ref{cardperE}). Using the bounds by respectively $\frac{\log  4}{\sqrt M}$ and $\frac{\log 2}{M+1}$ on the entropy of ergodic probability measures supported partially by $\mathcal E$ or $K_\square$ (see Prop. \ref{hKsquare} and \ref{EntropysurE}),  the dichotomy of Proposition \ref{EgaliteracLambda} implies:

\begin{prop}\label{noentropyailleurs}
 Every ergodic probability measure of entropy greater than $\log 4/\sqrt M$ is supported by 
 $\cup_{n\ge 0} f^n(\Lambda)$.
 
  Moreover the following sequence of atomic measures converges to $0$: 
$$\frac 1{Card \, Fix(f^n)} \sum_{z\in Fix \, f^n,\; z\notin \cup_{n\ge 0} f^n(\Lambda)} \delta_z\to 0$$ 
\end{prop}

As explained in the introduction, this accomplishes the proof of  Theorem \ref{Main}.

\subsection{Proof of the upper bound on the dimension of $\mathcal E$}
\label{appendixA1}
In this section we prove Proposition \ref{HDexceptionel} which bounds from above the Hausdorff dimension of the intersection of $\mathcal E$ with a transverse curve. We use a similar method to the one of \cite{Se03} in the quadratic map context.

We recall that $\sE:= \tilde \sR\setminus \sR$ and $\mathcal E:= \cup_{\sc\in \sE} W^s_{\sc}$.


We recall that $\cup_{\sc\in \tilde \sR} W^s_\sc$ is a disjoint union of Pesin stable manifolds, that we call \emph{long local stable manifolds}. 

A same proof as for Proposition 3.9 of \cite{berhen} shows that the tangent space to the stable manifolds of $( W^s_\sc)_{\sc\in \sR}$ is a $3$-Lipschitz function of $z\in \cup_{\sc\in \tilde \sR} W^s_\sc$. This implies that the holonomy along this lamination is Lipschitz. 

Hence to prove Proposition \ref{HDexceptionel}, it suffices to prove that the Hausdorff dimension of $S^t\cap \mathcal E$ is smaller than $1/\sqrt M$ for any flat stretched curve $S^t$, $t\in T^*$. To fixe the idea, we take $t=t\!t$ and define $K_*:=S^{t\!t}\cap \mathcal E$. 
 
To show that the Hausdorff dimension of $\mathcal E$ is small, we work with a family of nice coverings of $K_*$.  This covering is given by $\sA$-chains which are suitable for $S^{t\!t}$. For this end we define: 
\begin{defi} For every $t\in T^*$, let $\sD(t)$ be the set of $\sA$-chains $\underline \sa$ which are suitable from $S^t$ and so that $t\cdot \underline \sa$ belongs to $T^*$:
\[ \sD(t)=\{\underline \sg\in \sA^{(\mathbb N)}:\quad t\cdot \underline \sg\in T^*\}\; .\]
\end{defi}

It will be usefull to bound from above $\#_N:= \sup_{t\in T^*} Card\{ \underline \sg\in \sD(t):\; n_{\underline \sg} =  N\}$.

\begin{lemm}\label{card de Pj} For every $N\ge 0$, it holds $\#_N\le 2^N$.
\end{lemm}
\begin{proof}[Proof of Lemma \ref{card de Pj}] 
We proceed by induction on $N$. It follows from the describition of the simple pieces that $\#_1=0$, $\#_2=2$ and $\#_3=2$.

Let us suppose $N\ge 4$. As for all $t\in    T^*$ and $n\ge 0$, there are at most two letters in $\sA$ suitable from $S^t$ and with  order $n$, an induction gives:
\[\#_N\le 2+\sum_{n=2}^{N-2} 2 \#_{N-n}\le 2+\sum_{n=2}^{N-2} 2^{n+1}\le 2^{N}\; . \] 
\end{proof}

We recall that $|Y_\se|$ denote the maximal length of a flat stretched curve.

\begin{prop}\label{calculdHD} For every $N$, there exists $\sC_N\subset \sD(t\! t)$ such that:
\begin{itemize} 
\item[$(i)$] $ \{S^{t\! t}_{\underline \sa}:\; \underline \sa\in \sC_N\}$ covers $K_*$,
\item[$(ii)$] every  $\underline \sa\in \sC_N$ has its order  $n_{\underline \sa}\ge N$ and so $|S^{t\! t}_{\underline \sa}|\le |Y_\se|e^{-n_{\underline \sa}c/3}\le |Y_\se | e^{-c N/3 }$,
\item[$(iii)$] $ \sum_{\underline \sa\in \sC_N} e^{-s n_{\underline \sa}c/3}< \lambda^N$, with $\lambda:=e^{- M^{1/4}}$ and $s:=1/\sqrt{M}$.
\end{itemize} 
\end{prop}

An immediate consequence is the following:
\begin{coro}\label{p3}
The set $K_*$ has Hausdorff dimension smaller than $s=1/\sqrt M$ (which  is small for $M$ large).
\end{coro}
\begin{rema}Actually the same estimate holds for every  $S^{t}\cap \mathcal E$, among $t\in    T^*$.
\end{rema}

To find the set of words $\sC_N$ given by Proposition  \ref{calculdHD}, we consider the following subset of words of $ \sD(t)$, for $t\in T^*$:
\[{\sM}(t):= \bigcup_m\{\sa_1\cdots \sa_m\in\sD(t):\; n_{\sa_m}> M+\Xi \sum_{k< m} n_{\sa_k}\}.\]
We define also:
\[{\sM}:= \bigcup_{t\in T^*} \sM(t)\; .\]\index{$\sM$}

\begin{lemm}\label{mauvaiseorthographe}
For every  $\underline \sa\in \sE$,  
there exist $\underline  \sa'\in \sD(t\!t)$ and 
$(m_i)_{i\ge 0}\in {\sM}^\mathbb N$ such that:
\[\underline \sa= \underline \sa' \cdot \sm_1\cdots \sm_n\cdots\]
\end{lemm}
\begin{proof}[Proof of Lemma \ref{mauvaiseorthographe}] If $\underline \sa$ does not come back infinitely many times into $\tilde \sR$ by the shift $\tilde \sigma\colon \sA^{(\mathbb N)}\to \sA^{(\mathbb N)}$, then for every $n$ large, let 
$N\ge 0$ be such that for every $n\ge N$,  $\tilde \sigma^n( \underline \sa( z))$ does not belong to $\tilde \sR$. 
Let $\underline \sa'$ be the word formed by the $N-1$ first $\sA$-letters of $\underline \sa$. Let $\underline \sa_0\in \sA^\N$ be such that $\underline \sa= \underline \sa'\cdot \underline \sa_0$. We remark that $\tilde \sigma^n(\underline \sa_0)\notin \tilde \sR$ for every $n\ge 0$. By Remark \ref{remap40}, there exists $\sm_1\in {\sM}$  such that $\underline \sa_0:= \sm_1\cdot \underline \sa_1$, for a certain $\underline \sa_1\notin \tilde \sR$. 
Moreover $\tilde \sigma^{n}(\underline \sa_1)\notin \tilde \sR$, for every $n\ge 0$, and so we can  write $\underline \sa_1:= \sm_2\cdot \underline \sa_2$ with $\sm_2\in {\sM}$  and $\underline \sa_2\notin \tilde \sR$. And so on the proposition follows by induction. \end{proof}

  
\begin{proof}[Proof of Proposition \ref{calculdHD}]

For every $N$, let:
\[\sC'_N:= \{\underline \sa'\cdot \sm_1\cdots \sm_N\in \sE:\; \sm_i\in {\sM},\; \underline \sa'\in \sD(t\!t),\; n_{\underline \sa'}\le N\}.\]  
We remark that $\sC_N:= \cup_{N'\ge N} \sC_{N'}'$ satisfies $(i)$ by Lemma \ref{mauvaiseorthographe}. Property $(ii)$ holds by the hyperbolicity of $\underline \sa(S^t)$, $\underline \sa\in \mathcal D(t)$ (see Prop. \ref{geoYg}).

 Let us show $(iii)$. For $s>0$, put
\[\Psi_N(s):= \sum_{\underline \sa\in \sC'_N} e^{-s n_{\underline \sa} c/3}.\]

For $t\in    T^*$, let ${\sM}(t)$ be the set of words $\sm\in {\sM}$ so that $t\cdots \sm \in T^*$: $\sM(t)=\sM\cap \sD(t)$. For $N\ge 1$, we put $\sM^N(t)=\sM^N\cap \sD(t)$.

\begin{lemm}\label{CardMt}
We have for every $N\ge 1$ :
\[\sum_{\sm\in {\sM}^N(t)} e^{- n_{\sm} \frac c {3\sqrt M}}\le { M^N}e^{ - N \sqrt M \frac c 3}.\]
\end{lemm}
\begin{proof}[Proof of Lemma \ref{CardMt}]
 The word $\sm$ can be of the form $\square_\pm (\sc_i-\sc_{i+1})$ or ${\underline \sa'} \cdot \square_\pm (\sc_i-\sc_{i+1})$ with $n_{ \sc_i}\ge \Xi n_{{\underline \sa'}}$. In both cases the order is at least $M+1$. In the first case or in the second case with ${\underline \sa'}$ fixed,  there are only two possible parabolic pieces for each order. Consequently:

\[\sum_{\sm\in {\sM}(t)} e^{-s n_{\sm}\frac c 3}\le 
\sum_{j\ge M+1} 2 e^{-s j \frac c 3}+\sum_{j\ge 1} 2 \text{Card}\{{\underline \sa'}\in \sD(t):\; n_{{\underline \sa'}}=j\} \sum_{k\ge \Xi j} e^{- s (k+j)  \frac c 3}.\]

By Lemma \ref{card de Pj}, it comes:
\[\sum_{\sm\in {\sM}(t)} e^{-s n_{\sm}\frac c 3}\le \frac{ 2e^{-s (M+1) \frac c 3}}{1- e^{-s \frac c 3}}+ \sum_{j\ge 1} 2^{j+1} e^{-s j\frac c3} \frac{e^{- s\Xi j \frac c 3}}{1-e^{-s \frac c3}}.\]

For $s= M^{-1/2}$, since $M$ is large and $\Xi= e^{\sqrt M}$, we get:
\[\sum_{\sm\in {\sM}(t)} e^{- n_{\sm} \frac c {3\sqrt M}}\le { M}e^{ -  \sqrt M \frac c 3}.\]
This proves the Lemma for $N=1$. Also for $N=2$:
\[\sum_{\sm_1\cdot \sm_2\in {\sM^2}(t)} e^{- n_{\sm} \frac c {3\sqrt M}}\le 
\sum_{\sm_1\in {\sM}(t)} e^{- n_{\sm_1} \frac c {3\sqrt M}}
\sum_{\sm_2\in {\sM}(t\cdot \sm_2)} e^{- n_{\sm_2} \frac c {3\sqrt M}}
\le 
({ M}e^{ -  \sqrt M \frac c 3})^2.\]
And similarly we get the Lemma for any $N\ge 1$.  
\end{proof}
Hence: 
\[\Psi_N(s) \le \text{Card}\{{\underline \sa'}\in \sD(t\!t):\; n_{{\underline \sa'}}\le N\} \cdot {M^{N}}  e^{ - N \sqrt M \frac c 3}.\]

By Lemma \ref{card de Pj}, it comes: 
\[\Psi_N(s)\le  N2^{N}  {M^{N}}   e^{ - N  \sqrt M \frac c 3}.\]
And so:
\[ \sum_{\underline \sa\in \sC_N}  e^{- s n_{\underline \sa} \frac c3}\le \sum_{n\ge N} n2^{n}  {M^{n}}  e^{ - n \sqrt M \frac c 3}\le e^{- M^{1/4} N}.\]
\end{proof} 
\subsection{Cardinality of periodic cycle which intersect $\mathcal E$ }
\label{appendixA2}

Proposition \ref{cardperE} follows from an encoding of the set of periodic orbits intersecting $\mathcal E$ with the set $\sM ^{(\mathbb N)}$. 

We split the proof into several Lemmas and Propositions.
\begin{prop} 
If $x\in \tilde {\mathcal R}$ is periodic, then $\underline \sa(x)=:(\sa_i)_i\in \sA^\N$ is preperiodic.
\end{prop}
\begin{proof}

If there exists $j$ such that $p=n_{\sa_0\cdots \sa_j}$ then
$\sa_{n+j}=\sa_n$ for every $n$. Thus $\underline \sa(x)$ is periodic and $x$ belongs to $\mathcal R$ and even in $R$. 

Otherwise there exists $j\ge 0$ such that $p\in ( n_{\sa_0\cdots \sa_{j-1}}, n_{\sa_0\cdots \sa_{j}})$. By Lemma \ref{lemme qui torche}, there exists $i< j$ such that:
\[ \sa_0\cdots \sa_{j}/\sa_0\cdots \sa_{i}\quad \mathrm {and}\quad p+n_{\sa_0\cdots \sa_{i}}= n_{\sa_0\cdots \sa_{j}}\; .\]
As $ n_{\sa_0\cdots \sa_{j-1}}<p<n_{\sa_0\cdots \sa_{j}}$, it holds $n_{\sa_j}\ge n_{\sa_0\cdots \sa_{i}}$. As $\sa_0\cdots \sa_{j}/ \sa_j$, by Proposition \ref{propdediv}, we have  $\sa_{j}/\sa_0\cdots \sa_{i}$. 

Using the same Lemma, it holds that there exists $i'>i$ such that 
\[ \sa_0\cdots \sa_{j+1}/\sa_0\cdots \sa_{i'}\quad \mathrm {and}\quad p+n_{\sa_0\cdots \sa_{i'}}= n_{\sa_0\cdots \sa_{j+1}}\]
 \[\Rightarrow  \sa_{j+1}/\sa_{i+1}\cdots \sa_{i'} \quad \mathrm{and} \quad 
 n_{\sa_{i+1}\cdots \sa_{i'}}=n_{\sa_{j+1}}\]
By Proposition \ref{propdediv}, it comes that  $\sa_{i+1}\cdots \sa_{i'}={\sa_{j+1}}$. By uniqueness of the $\sA$-spelling, $\sa_{j+1}=\sa_{i+1}$.

And so on, $\sa_{i+k}=\sa_{j+k}$, for every $k\ge 1$. From this it comes that $(\sa_i)_{i\ge 1}$ is equal to the preperiodic sequence:
\[\sa_0\cdot \sa_1 \cdots \sa_i\cdot\underline \sm\cdot\underline \sm\cdots \underline \sm\cdots ,\quad \text{with } \underline \sm:= \sa_{i+1}\cdot \sa_{i+2} \cdots \sa_{j-1}\cdot \sa_{j}\]
\end{proof}

The above proof showed that for every periodic $x\in \mathcal E$, it holds that $\underline \sa(x) = \sa_0\cdot \sa_1 \cdots \sa_i \cdot \underline \sm \cdot \underline \sm \cdots \underline \sm \cdots $, with $\sa_j$ the last letter of $\underline \sm$ and:
\[n_{\underline \sm}=p\qquad \text{and}\qquad  \underline \sm /\sa_j/\sa_0\cdot \sa_1 \cdots \sa_i\; .\]

  The chain $\underline \sm$ is very irregular:


\begin{lemm} There exists $\sm_1,\dots ,  \sm_k\in {\sM}$ so that equal to $\underline \sm = \sm_1\cdots   \sm_k$. \end{lemm}
\begin{proof} The word $\underline \sm$ cannot be regular since otherwise $x$ would be in $\mathcal R$. Let $\sm_1\in \sA^{(\N)}$ be the minimal world which is not regular and so that $\exists \underline \sm'\in  \sA^{(\N)}$ satisfying  
 $\underline \sm=\sm_1\cdot \underline \sm'$. 

Note that $\underline \sm'$ can be empty but $\sm_1$ cannot. If $\underline \sm'$ is empty then we are done: $\underline \sm=\sm_1\in \sM$.  By remark \ref{remap40}, $\sm_1$ belongs to $\sM$.

Suppose that $\underline \sm'$ is regular. We recall that $\sa_{j}/\sa_0\cdot \sa_1 \cdots \sa_i$ and so $n_{\underline \sm'}\ge n_{\sa_{j}}\ge n_{\sa_0\cdot \sa_1 \cdots \sa_i}$.
As, by regularity, $n_{\sa_{i+1}}$ is at most $M+\Xi n_{\sa_0\cdot \sa_1 \cdots \sa_i}\le M+\Xi n_{\underline \sm'}$.

Furthermore $\sa_{j+1}=\sa_{i+1}$, and so $\underline \sm'\cdot  \sa_{i+1}$ is regular.
The same argument implies that $\underline \sm'\cdot  \sa_{i+1}\cdots \sa_m$ is regular. Thus $\underline \sm'\cdot  \sa_{i+1}\cdots \sa_m\cdots=\underline \sm'\cdot  \underline \sm\cdots \underline \sm\cdots$ belongs to $\tilde \sR$ and $\underline \sa(x)$ belongs to $\sR$. A contradiction.

Thus, by remark \ref{remap40},  we can write $\underline \sm$ in the form $\sm_1\cdot \sm_2\cdot \underline \sm''$ with $\sm_1$, $\sm_2\in \sM$.  Again by the full argument, we show that  $\underline \sm''$ is not regular, and so on, it follows that $\underline \sm$ belongs to ${\sM}^{(\mathbb N)}$.
\end{proof}

The above Lemma defines the following canonical map: 
\[\underline \sm\colon x\in  Per_f \cap \mathcal E\mapsto \sm_1 \cdots \sm_k\in {\sM}^{(\mathbb N)}.\]
Let us prove the following:
\begin{lemm} If $\underline \sm(x)=\underline \sm(x')$ for $x,x'\in Per_f\cap \tilde{\mathcal  R}\setminus {\mathcal  R}$, then the periodic orbits of $x$ and $x'$ are equal. \end{lemm}
\begin{proof}
We have $\underline \sa(x)$ and   $\underline \sa(x')$ of the form:
\[\underline \sa(x)=\sg \cdot \underline \sm \cdot \underline \sm \cdots \underline \sm \cdots \quad \text{and}\quad \underline \sa(x')=\sg' \cdot \underline \sm \cdot \underline \sm \cdots \underline \sm \cdots \qquad 
\text{with }\underline \sm / \sg\text{ and }\underline \sm / \sg'\; .\] 

Let us suppose for instance that $n_ \sg\ge n_ {\sg'}$. Then by 
Proposition \ref{propdediv}, it holds $\sg/\sg'$. By Lemma \ref{premier lemm de division}, It comes that $f^{n_\sg}(Y_\sg)\subset f^{n_{\sg'}}(Y_{\sg'})$. Thus $f^{n_\sg}(W^s_{\underline \sa(x)})\subset f^{n_{\sg'}}(W^s_{\underline \sa(x')})$. 
Consequently $f^{n_{\sg}-n_{\sg'}}(x)$ belongs to the stable manifold  $W^s_{\underline \sa(x')}$ of $x'$. As $x$ and $x'$ are periodic points their orbits are equal. 
 \end{proof}

Hence Proposition \ref{cardperE} is a consequence of the following:
\begin{prop}
\[Card \, \{\underline \sb\in  \bigcup_{t\in T^*,\; N\ge 0} {\sM}^{N}(t):\;n_{\underline \sb}=p\}\le e^{-\frac p{\sqrt M}}\; .\]
\end{prop}
\begin{proof}
First let us notice that a direct consequence of Lemma \ref{CardMt} is:
\begin{Claim}\label{418}
For every $t\in T^*$, it holds: 
\[Card\{\underline \sb\in  \bigcup_{N\ge 1} {\sM}^{N}(t):\;n_{\underline \sb}=p\}\le e^{p\frac c{3\sqrt M}}\sum_{N\ge 1} M^N e^{-N\sqrt M \frac c3}\;.\]
\end{Claim}

There are infinitely many $t\in T^*$, but most of them have the same $\sA$-letter of order $p$. 

We recall that Proposition \ref{5.17}, for every $t,t'\in T^*$, 
we have $\underline \sc^t_j= \underline \sc^{t'}_j$ for every $j\le \Xi (M+1+\nu(t,t'))$.
 Thus the parabolic pieces $\square_\pm(\sc^t_j-\sc^t_{j+1})$ and $\square_\pm(\sc^{t'}_j-\sc^{t'}_{j+1})$ are equal for every $j\le \Xi (M+1+\nu(t,t'))-1$. This proves :
\begin{Claim}
For all $t,t'\in T^*$, if $m\le \Xi(M+1+\nu(t;t'))+M$, then the following sets are equal:
\[\{ \underline \sa \in \mathcal D(t):n_{\underline \sa}\le m\}
=
\{\underline \sa \in \mathcal D(t'):n_{\underline \sa}\le m\}\; .\]

\end{Claim}
An immediate consequence of the latter Claim and the division rules $(D_3)$ is: 
\begin{Claim}
For all $t,t'\in T^*$, for every $N\le \Xi (M+1+\nu(t,t'))+M$, the sets $\{\underline \sa\in \mathcal D(t):\; n_{\underline \sa}\le N\}$ and $\{\underline \sa\in \mathcal D(t'):\; n_{\underline \sa}\le N\}$ are equal.  
\end{Claim}
Thus $\{\underline \sb\in  \bigcup_{N\ge 1} {\sM}^{N}(t):\;n_{\underline \sb}=p\}$ and $\{\underline \sb\in  \bigcup_{N\ge 1} {\sM}^{N}(t'):\;n_{\underline \sb}=p\}$ are equal if $$\nu(t,t')\ge \max(0, (p-M)/\Xi -M-1)=:\beth(p)\; .$$

By Proposition 6.5 \cite{berhen}, for every $k\ge 1$, \[P_{M^2 k}:=\{t\! t\cdot \sg\in T^*:\; n_{\sg}\le M^2k\}\]
satisfies that for every $t\in T^*$ there exists $t'\in  P_{M^2 k}$ so that $\nu(t,t')\ge k$. 

Note that $Card\, P_{M^2 k}\le M^2 k 2^{M^2 k }$ by Lemma 
\ref{card de Pj}. 
Consequently, by Claim \ref{418}, the cardinality of the $p$-periodic orbits is at most: 
\[Card \, \{\underline \sb\in  \bigcup_{t\in T^*,\; N\ge 1} {\sM}^{N}(t):\;n_{\underline \sb}=p\}\le \sup_{t\in P_{M^2 \beth(p)}} Card\{\underline \sb\in  \bigcup_{N\ge 1} {\sM}^{N}(t):\;n_{\underline \sb}=p\} Card\, P_{M^2 \beth(p)}\]
\[\le  e^{p\frac c{3\sqrt M}}\sum_{N\ge 1} M^N e^{-N\sqrt M \frac c3}  M^2 \beth(p) 2^{M^2 \beth(p)  }\le e^{\frac p{\sqrt M}}\; .\]
\end{proof} 
\begin{appendix}
\section{Proofs involving the existence of a Lyapunov exponent}
\label{prinvLyapExp}

\begin{proof}[Proof of Proposition \ref{evenregudim1}]
We saw that every invariant probability measure $\nu$ has a Lyapunov exponent at least equal to $c/3$ in Proposition \ref{lyapinvmeasure}. 

For the sake of a contradiction, assume  that $z$ is not eventually regular and that $\sa_i(z)\not=\square$ for some $i$. 

To simplify, we denote by $(\sa_i)_i$ the sequence $(\sa_i(z))_i$ associated to $z$. By replacing $z$ by an iterate, we can suppose that $\sa_1=\square $ and $\sa_2\not = \square$. We recall that $n_\square=M+1$. 

Let $(i_j)_{j\ge 1}$ be the increasing sequence of integers defined by $\sa_{k}=\square$ iff $k=i_j$. Note that $i_1=1$.

Put $N_1=0$ and for $j\ge 2$, put $N_j:= \sum_{i_{j-1}\le l< i_{j}} n_{\sa_l}$. Let $n_j:= N_1+\cdots + N_j$ be the $j^{th}$-irregular return  time and let   $z_{n_j}:= P^{n_j}(z)$ be the $j^{th}$ irregular return of $z$.
We prove below the following:
\begin{lemm}\label{tempsexpo}
For every $j$, the point $z_{n_j}$ belongs to $\R_\sa$ with $n_\sa\ge \frac {\Xi}{M} n_j$ with $\sa\in \sA$.
\end{lemm}
Hence $\sa$ must be of the form $\sa=\square_\pm (\sc_k-\sc_{k+1})$. The segment $\R_\sa$ has length at most $2 e^{c n_{\sa}/3}$. The segment joining $\R_\sa$ to 0 is filled by segments of the
 form $\R_{\square_\pm (\sc_l-\sc_{l+1})}$. Hence the modules of the points in $\R_\sa$ is at most $\sum_{m\ge n_\sa}  2 e^{-c \cdot m/3}=:Cst \cdot e^{-c\cdot  n_\sa/3}$.
 
Hence at $z_j:= P^{n_j} (z)$: 
\[\log \frac{|\partial_x P(z_j)|}{2Cst}< -\frac c3 \cdot n_\sa\le - \frac c3  \frac {\Xi}{M}n_j\;. \]
%
%
On the other hand, 
\[\log (|\partial_x P^{n_j} (z)|)\le  4 \cdot n_j\]
Consequently $|\partial_x P^{n_j+1} (z)|$ is very small, and since $n_j$ is arbitrarily large, this contradicts the fact that the Lyapunov exponent of $\nu$ is at least $c/3$.   
\end{proof}
\begin{proof}[Proof of Lemma \ref{tempsexpo}]
First let us show by  induction that for every $j$, $\sa_{i_{j}+1}$ is not $\square$. First we recall that $\sa_2 = \sa_{i_1+1} \not = \square$. Let $j\ge 2$. By induction we assume that $\sa_{i_{j-1}+1}\cdots \sa_{i_j-1}$ is a non-empty regular chain. Thus $z_{n_j} =0$ or $z_{n_j}\in \R_{\square_\pm (\sc_k-\sc_{k+1})}$ with $$n_{\sc_k}+M+1>M+\Xi n_{\sa_{i_{j-1}+1}\cdots \sa_{i_{j}-1}}=M+\Xi(N_j-M-1)\; .$$

If $z_{n_j} =0$ then $f^{M+1}(z_{n_j})$ belongs to a common piece of arbitrarily high order by $(SR_1)$. Hence $f^{M+1}(z_{n_j})$ is regular, and so $z$ is eventually regular which is a contradiction.
  
If $z_{n_j}\in \R_{\square_\pm (\sc_k-\sc_{k+1})}$ with $n_{\sc_k}+M+1>\Xi$ (and so $k>1$), then $f^{M+1}(z_{n_j})$ belongs to a simple piece. Thus the symbol $\sa_{i_{j}+1}$ is not $\square$. This proves the induction. 

Moreover, for every $j$, the point $z_{n_j}$ belongs to a set $\R_\sa$, with $\sa$ of the form ${\square_\delta (\sc_k-\sc_{k+1})}$ satisfying:
\[ n_{\sa}\ge M+1+\Xi (N_j-M-1)\; .\]
As $N_j-M-1\ge 2$, by the mean value theorem, it comes $n_\sa\ge (\frac{\Xi}{M}+1) N_j$. 

By condition $(\star)$ on the common sequence, the puzzle piece $\sc_k$ is a product of simple and parabolic pieces $(\sb_i)_{i=1}^m$ which forms a regular chain with $m\ge k$. Hence $(\sb_i)_{i=1}^m$ is equal to the first symbols of $\sa_{i_j+1}\cdots \sa_{i_{j+1}-1}$.  Thus it comes 
$$N_{j+1}=M+1+n_{\sa_{i_j+1}\cdots \sa_{i_{j+1}-1}}\ge M+1+n_{\sb_1\cdots \sb_m}= n_{\sa}$$
 and:
\begin{equation}\label{majorationgeo2}
N_{j+1}\ge   n_{\sa}\ge \Big(\frac {\Xi}{M}+1\Big) N_j\ge  \frac {\Xi}{M}N_{j}+N_{j}\ge \frac {\Xi}{M}\sum_{l=1}^{{j}} N_l=\frac {\Xi}{M} n_j\; .\end{equation}
\end{proof}

\label{sectionPropeventuallyregu}
\begin{proof}[{\bf Proof of Proposition \ref{eventually regu}}]
Let $\mu$ be an ergodic measure with support off $\{A,A'\}$.
By Lemma \ref{lyappos},  $\mu$ has one non-negative  Lyapunov exponent.

 This implies that for $\mu$-almost every point $z\in Y_\se$,  there exists a unit vector $u$ such that:
\begin{equation}\label{weak hyper}
\| D_zf^n (u)\|\ge e^{-c^+ n/2}, \quad \text{ for every $n\ge 0$ large enough}.	
\end{equation}
Suppose, for the sake of a contradiction, that $z$ is not eventually regular and $\sa_i(z)\not=\square$ for some $i$. 

To simplify, we denote by $(\sa_i)_{i\ge 0}$ the sequence $\underline \sa (z) =(\sa_i(z))_i$. By replacing $z$ by an iterate, we can suppose that $\sa_0=\square $ and $\sa_1\not = \square$. 

Let $(i_j)_{j\ge 1}$ be the increasing sequence of integers defined by $\sa_{k}=\square$ iff $k=i_j$. Note that $i_1=0$.

Put $N_0=0$ and for $j\ge 1$, put $N_j:= \sum_{i_{j-1}\le l< i_{j}} n_{\sa_l}$. Let $n_j:= N_1+\cdots + N_j$ be the $j^{th}$-irregular return  time and let   $z_{n_j}:= f^{n_j}(z)$ be the $j^{th}$ irregular return of $z$. 
 
By the same proof as Lemma \ref{tempsexpo} (therein one just replaces $\R$ by $Y$), it comes that $f^{n_j}(z)$ belongs to 
$(f^{M+1}|Y_\square )^{-1}(Y_{\sc_k})$, with $\sc_k=\sc_k^{t_j}$ the common piece of depth $k$ of $S^{t_j}$ with $t_j= t\! t\cdot \sa_{i_{j-1}+1}\cdots \sa_{i_{j}-1}$ and satisfies:
%
%
%
%
%
%
\begin{equation}\label{majorationgeo}
N_{j+1}\ge  M+1+n_{\sc_k}\ge \Big(\frac \Xi M +1\Big) N_j\ge \frac \Xi M n_j
 \end{equation}
 
Basically, the idea of the rest of the proof is the same as for Proposition \ref{evenregudim1}: we are going to show that $\|Df^{2n_j}(u)\|< e^{-c^+ n_j}$ which is a contradiction with Inequality (\ref{weak hyper}).

Neverthesless it is slightly more complicated since we deal with the two dimensional case. The idea is to compare the expansion of $Df^{n_j}(w)$,
with $w:= Df^{n_j}(u)/\|Df^{n_j}(u)\|$,  to the contraction at a point $\zeta$  which is the two dimensional equivalent of the quadratic critical point in dimension 1.

First let us notice that by $(SR_1)$, the curve 
$f^{M+1}(S^{t_j}\cap Y_\square)$ is tangent to $W^s_{\sc^{t_j}}$. 
Let $\zeta\in S^{t_j}\cap Y_\square $ be the preimage by $f^{M+1}|Y_\square$ of the tangency point.  We notice that 
$\zeta$ is in $S^{t_j}_{\square \sc_k}$ where:
\[S^{t_j}_{\square \sc_k}= (f^{M+1}|S^{t_j}_\square)^{-1}(Y_{\sc_k })=
 cl(\cup_{m\ge k} S^{t_j}_{\square_\pm \sc_m^{t_j}})\;.\] 
By hyperbolicity of the parabolic pieces, the length of 
$S^{t_j}_{\square \sc_k}$ is at most $Cst\cdot e^{-\frac c3 (M+1+ n_{\sc_k})}$, where $Cst=|Y_\se|/(1-e^{-\frac c3 })$ is a real number independent to $n_j$. 

By definition, a unit vector $v$ which is tangent to $S^{t_j}$ at $\xi$ is sent by $f^{M+1}$ to a tangent vector to $W^s_{\sc^{t_j}}$. Since $W^s_{\sc^{t_j}}$ is $\theta^m$-contracted  by $f^m$ for every $m\ge 0$, it comes:
\[\|D_{\xi}f^{n_j}(v)\|\le \theta^{n_j-M-1} e^{c^+(M+1)}\;.\]
Since $n_j$ is very large, this upperbound is very small, in particular it holds:
 \begin{equation}\label{expansioncritique}
  \|D_{\xi}f^{n_j}(v)\|\le e^{- 2c^+n_j}/2\;.
  \end{equation}

In order to compare $\|D_{\xi}f^{n_j}(v)\|$ to $\|D_{z_{n_j}}f^{n_j}(w)\|$ we prove in the sequel the following lemma:

\begin{lemm}\label{lemmapospone}
For every $j\ge 2+N$, there exist $z'_{n_j}\in S^{t_{{j}}}$ and a unit vector $w'\in T_{z'_{n_j}} S^{t_{j}}$ (i.e. tangent to $S^{t_{{j}}}$ at $z'_{n_j}$) such that the angle between $w$ and $w'$ is $\theta^{n_j/5}$-small and furthermore $z_{n_j}$ and $z'_{n_j}$ are $\theta^{n_{j}/2}$-close.
\end{lemm}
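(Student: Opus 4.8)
The plan is to argue by induction on $j$, tracking how the tangent direction $u_{n_j}$ at the $j$-th irregular return compares to the tangent space of the flat stretched curve $S^{t_j}$. At $j = 2+N$ the point $z_{n_j}$ lies in the box $Y_g$ with $g = a_{i_1+1}\cdots a_{i_2-1}$ (after the first $\square$), which by Lemma \ref{lelemmeafaire}.3 means $z_{n_j}$ lies in $Y^g = f^{n_g}(Y_g)$, a box whose unstable boundary consists of two flat curves and whose stable boundary is $\theta^{n_g}$-small. So $z_{n_j}$ is $\theta^{n_g}$-close (hence $\theta^{n_j/2}$-close, using $n_g \geq$ a fixed fraction of $n_j$ by inequality (\ref{majorationgeo})) to the curve $S^{t_j} = S^{t\!t\cdot g}$. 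For the direction: the condition (\ref{weak hyper}) forces $u$ to be essentially non-contracted, and by Lemma \ref{lemma2.1} the most-contracted direction $e_k$ has $\|T_zf^i(e_k)\| \le b^{i/2}$; hence $u_{n_j} = T_zf^{n_j}(u)$ must be very close to the image of the expanding cone field, which by Lemma \ref{lelemmeafaire}.1 and Proposition \ref{admit3'} is inside $\chi$ and, more precisely, close to the tangent direction of $S^{t_j}$. This gives the base case with an angle bound of the requested order $\theta^{n_j/5}$.

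For the inductive step I would pass from the $j$-th return to the $(j+1)$-th. Between times $n_j$ and $n_{j+1}$ the orbit of $z$ first goes through the parabolic symbol $a = \square_\delta(c_k - c_{k+1})$ of order $n_a$ with $n_a \ge \frac{\sqrt\Xi}{M+2}N_j$, and then through the regular word whose $\sA$-spelling begins with $\underline c_k$. The key analytic inputs are: (i) the cone field $\chi_{\Delta(S^{t_j})}$ associated to a parabolic piece is mapped into $\chi$ with $h$-times expansion by $f^{n_\Delta}$ (Proposition \ref{admit3'}), and a vector $\theta^{n_\Delta/3}$-close to $TS_\Delta$ stays in that cone field; (ii) the subsequent common/regular word again satisfies Lemma \ref{lelemmeafaire}, so directions in $\chi$ are refined toward the tangent of $S^{t_{j+1}}$ and points get $\theta^{(\cdot)}$-contracted along the stable foliation. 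Concatenating, I would show that if $u_{n_j}$ makes angle $\le \theta^{n_j/5}$ with $S^{t_j}$, then after passing through the parabolic piece and the regular block the angle of $u_{n_{j+1}}$ with $S^{t_{j+1}}$ is contracted to at most something like $\theta^{n_{j+1}/5}$ — here the geometric growth (\ref{majorationgeo}) of the $N_l$ (equivalently $n_{j+1} \ge (\frac{\sqrt\Xi}{M+2}-1)n_j$) is exactly what lets the exponents add up favorably, absorbing the loss factors $e^{c^+(\cdot)}$ from bounded distortion over $O(M)$ extra iterates. The point $z'_{n_{j+1}}$ is then produced by projecting $z_{n_{j+1}}$ along the contracting foliation $\mathcal F_{n_{j+1}}$ onto $S^{t_{j+1}}$; the distance bound $\theta^{n_{j+1}/2}$ follows because $z_{n_{j+1}}$ lies in the box $Y^g$ attached to $S^{t_{j+1}}$, whose stable extent is $\theta$-exponentially small in $n_{j+1}$.

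The main obstacle I anticipate is controlling the direction $u_{n_j}$ as it is carried through a \emph{parabolic} symbol, because near the fold of $S^\square$ the differential is badly behaved and $e_k$ is wild; the point is that $z$ is \emph{not} at a critical point (its vector $u$ is uniformly non-contracted by (\ref{weak hyper})), so one must quantify how far $z$ must be from the tangency locus and show that the $h$-times cone field $\chi_{\Delta(S)}$ genuinely contains $T_zf^{n_j}(u)$. This is where the $\aleph$-condition in the definition of common sequences — which keeps the relevant tangency points a definite distance from the $\partial^s Y_{\underline c_l}$ — and the estimate $n_{\alpha_{i+1}} \le M + n_\Delta/\Xi$ must be used, precisely as in the proof of Lemma \ref{PCEinfty} via (\ref{pour pce2})--(\ref{pour pce4}). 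A secondary nuisance is bookkeeping the three different exponents ($/5$ for the angle, $/2$ for the position, $/3$ for the $h$-times expansion) so that the weaker one survives the induction; I would fix a single small rate (e.g. $1/5$) at the outset and check it is preserved, using Remark \ref{violent} whenever a factor like $e^{c^+ M}$ must be dominated by $\theta^{-\epsilon n_j}$.
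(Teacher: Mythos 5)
Your proposal takes a genuinely different route from the paper, and the inductive step as you describe it has a gap that I do not think can be repaired.

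The paper's proof of Lemma~\ref{lemmapospone} is not an induction on $j$; it is a direct one-step construction valid for each $j\ge 2+N$. The key move is to place the auxiliary point not at $z_{n_j}$, and not at the previous irregular return $z_{n_{j-1}}$, but at the intermediate time $z_{n_{j-1}+M+1}$, i.e. just \emph{after} the $\square$-block and just \emph{before} the $\sqrt\Xi$-regular block $g=a_{i_{j-1}+1}\cdots a_{i_j-1}$. By Lemma~\ref{lelemmeafaire}.2 this point lies on a stable curve $\mathcal C\subset Y_g$ that is $\theta^k$-contracted for $k\le n_g$ and that meets \emph{every} flat stretched curve, in particular $S^{t\!t}$; taking $z'\in\mathcal C\cap S^{t\!t}_g$ and $z'_{n_j}:=f^{n_g}(z')\in S^{t_j}$ gives the distance bound at once. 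The angle is then estimated by the cross-product identity $|\sin\angle(u'_{n_j},u_{n_j})|=\|u'_{n_j}\times u_{n_j}\|/(\|u'_{n_j}\|\|u_{n_j}\|)$, pulling back $n'=[n_g/2]+1$ steps to exploit the Jacobian bound $|\det Tf^{n'}|\le b^{n'}$, the $h$-times expansion along $g$ for the lower bound on $\|u'_{n_j}\|$, and --- crucially --- the weak-hyperbolicity hypothesis (\ref{weak hyper}) for a lower bound on $\|u_{n_j}\|$. It is this determinant argument, not cone-field transversality, that produces the quantitative exponent $n_j/5$.

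The obstacle you correctly flag at the end --- controlling the direction through a parabolic symbol --- is in fact fatal for the inductive strategy, not merely a nuisance, and your proposed remedy does not touch it. To carry the bound from step $j$ to step $j+1$ through the symbol $a=\square_\delta(c_k-c_{k+1})$ you would need $u_{n_j}$ to land inside the cone $\chi_{\Delta(S^{t_j})}$, whose aperture is $\theta^{n_\Delta/3}$. But the inductive hypothesis only gives an angle $\theta^{n_j/5}$, and by construction $n_a\ge\frac{\sqrt\Xi}{M+2}N_j$ is exponentially larger than $n_j$ (this is exactly what drives inequality~(\ref{majorationgeo})), so $\theta^{n_\Delta/3}\ll\theta^{n_j/5}$ and the inclusion fails. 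The $\aleph$-condition and the bound $n_{\alpha_{i+1}}\le M+n_\Delta/\Xi$ control the distance from the tangency point to $\partial^s Y_{\underline c_l}$, which is relevant to (\ref{pour pce2})--(\ref{pour pce4}) in Lemma~\ref{PCEinfty}, but they do not widen the cone. Moreover the mechanism is the opposite of what you assert in the base case: the tangent of $S^{t_j}$ at $z'_{n_j}$ is \emph{not} an expanding direction --- it is near the fold and gets super-contracted by $f^{n_j}$ as in (\ref{pertedueaunepiecepara}), which is precisely what later yields the contradiction in Proposition~\ref{eventually regu}. That $u_{n_j}$ is nonetheless forced to align with it is a consequence of the small Jacobian, not of expansion, so the determinant computation cannot be replaced by the heuristic ``$u_{n_j}$ must be close to the expanding cone''. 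In short: drop the induction, place $z'$ at time $n_{j-1}+M+1$ on the stable curve crossing $S^{t\!t}$, push forward by the regular block $g$ alone, and run the $\sin\angle$/determinant estimate.
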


As $z_{n_j}$ is in $(f^{M+1}|Y_\square)^{-1}(Y_{\sc_k })$, it comes that $z_{n_j}'\in S^{t_j}$ is $2 \theta^{n_{j}/2}$ close to 
$S^{t_j}_{\square \sc_k} := (f^{M+1}|S^{t_j}_\square)^{-1}(Y_{\sc_k })$. Since $\zeta$ belongs to $S^{t_j}_{\square \sc_k}$, using the estimate on the length of  $S^{t_j}_{\square \sc_k}$ it comes that the distance  between $z'_{n_j}$ and $\zeta$ is at most $2 \theta^{n_{j}/2} +Cst\cdot e^{-\frac c3 (M+1+ n_{\sc_k})}$.
By the flatness of the curve $S^{t_j}$ and Lemma \ref{lemmapospone} it comes:
\begin{Claim} The distance  between $z_{n_j}$ and $\zeta$ is at most $3 \theta^{n_{j}/2} +Cst\cdot e^{-\frac c3 (M+1+ n_{\sc_k})}$. 

The angle between $w$ and $v$ is at most $2\theta^{n_j/5}+  e^{-\frac c3 (M+1+ n_{\sc_k})}$.
\end{Claim}

A classical computation gives:
\[\|D_{z_{n_j}} f^{n_j}(w)-D_{\zeta} f^{n_j}(v)\|\le \|D_{z_{n_j}} f^{n_j}-D_{\zeta} f^{n_j}\|+\|D_{z_{n_j}} f^{n_j}\|\cdot \|w-w'\|\]
\[\le n_j \| Df\|^{2(n_j-1)} \|D^2f\| d(z_{n_j}, \zeta) + \|Df\|^{n_j}\cdot \|w-w'\|\le (n_j e^{2n_jc^+}+e^{c^+ n_j})   (5\theta^{n_j/5}+  2e^{-\frac c3 (M+1+ n_{\sc_k})} )\]

Then it follows from Inequality (\ref{majorationgeo}) and then (\ref{expansioncritique}) that:
\[\|D_{z_{n_j}} f^{n_j}(w)-D_{\zeta} f^{n_j}(v)\|\le \frac 12 e^{-3 c^+ n_j}\Rightarrow \|D_{z_{n_j}} f^{n_j}(w)\|\le e^{-2 c^+ n_j}\]
Consequently, since $\|D_{z} f^{n_j}(u)\|\le e^{c^+ n_j}$, it holds $ \|D_{z} f^{2n_j}(u)\|\le e^{- c^+ n_j}$.
A contradiction with (\ref{weak hyper}).
\end{proof}

\begin{proof}[Proof of Lemma \ref{lemmapospone}]
The point $z_{n_{j-1}+M+1}:=f^{n_{j-1}+1}(z)$ is $i_j-i_{j-1}-1$-regular. 
Indeed,  $\sg:= \sa_{i_{j-1}+1}\cdot \sa_{i_{j-1}+2}\cdots \sa_{i_{j}-1}(z)$ is regular and consists of the first letters of $\underline \sa(z_{n_{j-1}+M+1})$.
In Proposition \ref{geoYg}.4, we saw that $z_{n_{j-1}+M+1}$ belongs to a curve $\mathcal C$ which satisfies the following properties:
\begin{itemize}
\item[(i)] For every $k\le n_\sg=  N_j-M-1$, $\diam f^k(\mathcal C)\le \theta^k$.
\item[(ii)] The curve $\mathcal C$ intersects every flat stretched curve.
\item[(iii)] The curve $\mathcal C$ is included in $Y_\sg$.   
\end{itemize} 

By (ii), there exists a point $z'\in \mathcal C\cap S^{t\! t}$. By (iii), the point $z'$ belongs to $S^{t\!t}_\sg$. Thus $z'_{n_j}:=f^{n_\sg}(z')$ belongs to $S^{t_{j}}$.  By (i), the distance between $z'_{n_j}$ and $z_{n_j}$ is less than $\theta^{n_\sg}$. By (\ref{majorationgeo}), we have:
\[(1+\frac{M}{\Xi}) N_j \ge   n_j.\]

As $j\ge 2$, by (\ref{majorationgeo}),  $N_j$ and so $n_j$ is large with respect to $M$, thus:
\begin{equation}\label{ngnj}   n_\sg =N_j -M-1> n_j/2. \end{equation}
   It follows that the distance between $z'_{n_j}$ and $z_{n_j}$ is less than $\theta^{n_j/2}$.
 
 Take a unitary vector $u'$ tangent at $z'$ to $S^{t\! t}$. 
For every $k\ge 0$, put $u_k:= D_zf^k(u)$ and  $u'_k:= D_{z'}f^k(u')$.
We notice that $u'_{n_j}:= D_{z'}f^{n_\sg}(u')$. 
 To evaluate the angle between $u'_{n_j}$ and $u_{n_j}$, we regard the formula:
\[|\sin \angle(u'_{n_j},u_{n_j})|= \frac{\|u'_{n_j}\times u_{n_j}\|}{ \|u'_{n_j}\|\cdot \|u_{n_j}\|}\; .\]
Put $n':= [n_\sg/2]+1$. Let $x:= f^{-n'}(z_{n_j})$ and $x':= f^{-n'}(z'_{n_j})$. 
We have
\[|\sin \angle(u'_{n_j},u_{n_j})|
\le \frac{|\det (D_{x} f^{n'})|\cdot \|u'_{n_j-n'}\times u_{n_j-n'}\|}{\|u'_{n_j}\|\cdot \|u_{n_j}\|}
+ \frac{ \|D_{x} f^{n'}-D_{x'} f^{n'}\|\cdot \|u'_{n_j-n'}\|}
{\|u'_{n_j}\|}.\]
Let us study the first term of this sum. Since the determinant is less than $b$, $|\det (D_{x} f^{n'})|\le b^{n'}$.
By $h$-times property of $Y_\sg$ (Proposition \ref{geoYg}.3), $\|u'_{n_j-n'}\|/\|u'_{n_j}\|\le e^{- n'c/3}$. 
By Inequality (\ref{weak hyper}), as $j$ and so $n_j$  are large enough, it comes:
\[\|u_{n_j}\|\ge e^{-c^+ n_j/2}\|u_0\|;\quad \|u_{n_{j}-n'}\|\le e^{ c^+ (n_{j}-n')}\|u_{0}\|\Rightarrow\frac{\|u_{n_j-n'}\|}{\|u_{n_{j}}\|}\le  e^{c^+ n_j/2+c^+ (n_{j}-n')}\le e^{2 c^+ n_j}.\]
Consequently: 
\[\frac{|\det (D_{x} f^{n'})|\cdot \|u'_{n_j-n'}\times u_{n_j-n'}\|}{\|u'_{n_j}\|\cdot \|u_{n_j}\|}\le
b^{n'}  e^{2 c^+ n_j}  e^{- n'c/3}.\]
Using again $h$-times property, the second term is bounded from above by $ \| D_{x} f^{n'}-D_{x'} f^{n'}\| e^{-n'c/3}$.
A classical computation gives, $\|D_{x} f^{n'}-D_{x'} f^{n'}\|\le (n'+1) e^{2n'c^+} \theta^{n'}$.  Therefore:
\[|\sin \angle(u'_{n_j},u_{n_j})|\le b^{n'}  e^{2 c^+ n_j}  e^{- n'c/3} + (n'+1)e^{n'c^+ - n'c/3} \theta^{n'}.\]
By (\ref{ngnj}), we have $n_j> n_\sg >n_j/2$ it comes:
\[|\sin \angle(u'_{n_j},u_{n_j})|\le b^{n_j/4}  e^{2 c^+ n_j} + (n_j+1)e^{n_jc^+ - n_j c/12} \theta^{n_j/4}\]
Hence the angle between $w= u_{n_j}/\|u_{n_j}\|$ and $w'= u'_{n_j}/\|u'_{n_j}\|$ is smaller than $\theta^{n_j/5}$.
\end{proof}

\begin{proof}[{\bf Proof of Proposition \ref{EgaliterdesR}}]
We are going to prove that for every invariant ergodic measure $\mu$, the subsets $\cup_{n\ge 0} f^n(  R )=\cup_{n\ge 0} f^n( \cap_m (f^\mathcal R)^m(\mathcal  R ))$ and $\cup_{N\ge 0} \cap_{n\ge N} f^n(\mathcal R)$  are equal $\mu$-almost everywhere. 

The first subset is contained clearly in the second one.  By ergodicity of $\mu$ and invariance of $\cup_{N\ge 0} \cap_{n\ge N} f^n(\mathcal R)$, we can suppose that the latter has full measure. This implies that the measure of $\mathcal R$ is positive. 

First let us notice that there are points in $\cap_{N\ge 0} \cup_{n\ge N} f^n(\mathcal R)$ which are not in $\cup_{n\ge 0} f^n(  R )$. 
This is the case for instance of a point in $\cap_{n\ge 0} \mathcal R\cap f^{n_{\sg_i}}(Y_{\sg_i})$ with $\sg_i=\sc_1\square_-(\sc_1-\sc_2)\cdots \square_-(\sc_1-\sc_2)$. We notice that this point does not even belong to $\cup_{n\ge 0}f^n((f^{\mathcal R})^2(\mathcal R))$.

We shall first prove the set of that such points has measure $0$.
\begin{Claim}\label{preaprouver}
For every $q\ge 0$, the subset $\mathcal R\cap \cup_{n\ge 0} f^n((f^{\mathcal R})^{q}(\mathcal R))$ is off full measure in $\mathcal R$. 
\end{Claim}
\begin{proof} 
Let us recall that every $\underline \sg\in \sR$ is canonically split as a concatenation of regular chains $(\sg_i)_{\ge 0}$: $\underline \sg = \sg_0\cdots \sg_i\cdots$.
For every $q$, there are countably many $q$-upplets of regular chains $\sg_0\cdots \sg_q$. This defines a countable partition of $\sR$: $\sR =\bigsqcup_{\sg_0,\dots ,\sg_q}\sR_{\sg_0\cdots \sg_q}$, where $\sR_{\sg_0\cdots \sg_q}$ is the set of $\underline \sg\in \sA^{\mathbb N}$ which begins with $\sg_0\cdots \sg_q$. This defines also a countable partition of $\mathcal R$. 
\[\mathcal R =\bigsqcup_{\sg_0,\dots ,\sg_q}\mathcal R_{\sg_0\cdots \sg_q},\]
where $\mathcal R_{\sg_0\cdots \sg_q}$ is formed by points $x$ in $\mathcal R$ so that $\underline \sa(x)\in \sR_{\sg_0\cdots \sg_q}$.  As this union is countable, there exists a certain $q$-upplets $\sg_0\cdots \sg_q$ so that $\mathcal R_{\sg_0\cdots \sg_q}$ has positive measure. 
By Poincaré recurrence Theorem, for $\mu$-a.e. $x\in \mathcal R$, there exists infinitely many $n\ge 0$ so that $f^{-n}(x)$ belongs to $\mathcal R_{\sg_0\cdots \sg_q}$. This implies the Claim. \end{proof}
Unfortunately it is not sufficient to conclude that $\mu$-a.e. point $x\in \mathcal R$ are in  $\cup_{n\ge 0} f^n(\cap_{q\ge 0}(f^\mathcal R)^q(\mathcal R))= \cup_{n\ge 0} f^n(R)$. 
To get such an inequality we shall prove the following Claim:
\begin{Claim}\label{Aprouver}
There exists $N'\ge 0$  so that $\cup_{j=0}^{N'}f^{-j}(R)$ has a positive $\mu$-measure.
\end{Claim}
This implies that $R$ has positive measure and so that its orbit has full measure by ergodicity. This is the statement of Proposition \ref{EgaliterdesR}. 
\end{proof}
\begin{proof}[Proof of Claim \ref{Aprouver}]
To prove the Claim we use the following lemma shown below:
\begin{lemm}\label{A6}
There exists a measurable function $N\colon \mathcal R\to \N$ to so that for $\mu$-a.e. $x\in \mathcal R$, for every $\sb=(\sb_i)_i\in \sR$ and $m$ so that  $x\in f^m(W^s_{\sb})$, the following bound holds.
\[n_{\sb_p}\le N(x)\quad\text{ where $p$ is so that }\quad 
n_{\sb_0\cdot \sb_1\cdots \sb_{p-1}}
< m\le n_{\sb_0\cdots \sb_{p}}\;.\]
\end{lemm}
%

By Lemma \ref{lemme qui torche} 
with $\underline \sa(x)=(\sa_i)_{i\ge 0}$, there exists $i_0$ so that 
\[\sb_0\cdots \sb_{p}/ \sa_0\cdots \sa_{i_0},\quad n_{\sa_0\cdots \sa_{i_0}}=n_{\sb_0\cdots \sb_{p}}-m\qquad \text{and}\qquad \sb_{p+k}= \sa_{i_0+k}\quad \forall k\ge 0\; .\]
Thus by the above Lemma, $n_{\sa_0\cdots \sa_{i_0}}\le N(x)$. 

Let $i_1\ge 0$ be maximal such that $n_{\sa_0\cdots \sa_{i_1}}\le N(x)$. We notice that $i_1$ does not depend on $\underline \sb$ but only on $\underline \sa(x)$ and $N(x)$, and so only on $x$ as a measurable function. Let $i_2\ge i_1$ be minimal so that $\tilde \sigma^{i_2}(\underline \sa)\in \sR$. 
We notice that $\hat N(x):= n_{ \sa_0\cdots \sa_{i_2}}$ depends only on $x$; it is a measurable function of $x$. 

We proved that for every $\underline \sb\in \sR$ and $m\ge 0$ satisfying
$x\in f^m(W^s_{\underline \sb})$, there exist $k\le \hat N(x)$ and $l\ge 0$ so that $f^k(x)\in (f^\mathcal R)^l(W^s_{\underline \sb})$. By Claim \ref{preaprouver}, the integer $l$ can be supposed arbitrarily large. Thus:
\[x\in \bigcup_{k\le \hat N(x)} f^{-k}((f^\mathcal R)^q(\mathcal R) )\quad \forall q\ge 0\Rightarrow x\in \bigcup_{k\le \hat N(x)} f^{-k}(R) )\; .\]
Since $\hat N$ is measurable, there exists $N'$ large enough so that $ \bigcup_{k\le N'} f^{-k}(R)$ has positive measure.  

\end{proof}
\begin{proof}[Proof of Lemma \ref{A6}]
We are going to use an argument based on the convergence of the Lyapunonv exponent. Put $m':=m-n_{\sb_0\cdots \sb_{p-1}}$. We notice that $m'\le n_{\sb_p}$.  

 With $\sigma\ge c/3$ the Lyapunov exponent of $\mu$ and $E^u_x$ the unstable direction at $x$, for every $\eta>0$ small, there exists $C(x)>0$ so that for every $j \in \mathbb Z$:
\[-\frac{C(x)}{|j|} +\sigma -\eta\le  \frac1j \log \|Df^j |E^u_x\|\le \frac{C(x)}{|j|} +\sigma +\eta\; .\]
Thus 
\[-C(x) -(\sigma +\eta) m'\le   \log \|Df^{-m'} |E^u_x\|\le C(x) -(\sigma-\eta) m' \; . \]
\[-C(x) +(\sigma -\eta) m\le   - \log \|Df^{-m} |E^u_x\|\le C(x) +(\sigma+\eta) m \; . \]
Consequently:
\begin{equation}\label{eqA6}
-2C(x) + \sigma (m-m')  -\eta(m+ m')\le   \log \|Df^{m-m'} |E^u_{x_{-m}}\|\le 2C(x)+ \sigma (m-m') +\eta(m+ m') \; . 
\end{equation}
 Let us assume that $n_{\sb_p}\ge 101(M+1)$ (otherwise $N=101(M+1)$ is a suitable bound) and so that $\sb_p$ is a parabolic symbol of the form $\square_\pm(\sc_k-\sc_{k+1})$, with $n_{\sc_k}\ge 100 M$. Also we suppose $m$ large:  $m\ge 2M+4$. 
 
With $m'':= M+1+n_{\sc_k}/M$, we have $M m''\ge n_{\sb_p} \ge m'$. Similarly to (\ref{eqA6}), with $\eta'= (M+1)\eta$, it holds:
\begin{equation}\label{eqA7}
\left\{\begin{array}{rl}
-2C(x) +(\sigma -\eta)m'' \le   \log \|Df^{m''} |E^u_{x_{-m'}}\|
& \text{if } m''-m'\ge 0.\\
-2C(x) +(\sigma-\eta') m''\le -2C(x) +\sigma m'' -\eta (m'+m'')\le   \log \|Df^{m''} |E^u_{x_{-m'}}\|
& \text{if } m''-m'\le 0.\\
\end{array}\right.
\end{equation}

From the regularity property $(\star)$, we have:
\[m'\le n_{\sb_p}\le M+1+\Xi n_{\sb_0\cdots \sb_{p-1}}, \quad 
\text{and}\quad 
m\le n_{\sb_0\cdots \sb_{p}}\le M+1+(\Xi+1) n_{\sb_0\cdots \sb_{p-1}}\;.\]
By the latter inequality:
\[m-m'=n_{\sb_0\cdots \sb_{p-1}}\ge \frac{m-M-1}{\Xi+1}\ge \frac{m}{2\Xi}\quad \text{and so}\quad  \frac{m+m'}{m-m'}\ge 2\Xi. \]

Thus with $\eta<\sigma/ (2\Xi)$ it holds $\sigma (m-m')  -\eta(m+ m')\ge0$ and so by (\ref{eqA6}):
\begin{equation}\label{eqA8}
-2C(x) \le   \log \|Df^{m-m'} |E^u_{x_{-m}}\| \; . 
\end{equation}


Let $x'_{-m}$ be the intersection point between $W^s_{\sb}$ and $S^{t\!t}$  and $u'_{-n}$ be a unit vector tangent to $S^{t\! t}$ at $x'_{-m}$. For $i\in \mathbb Z$, let $x_{i}:=f^{i}(x)$, $x'_{i}:=f^{m+i}(x'_{-m})$,   $u_{i}:= D_{x}f^{i}(u)$ and $u'_{i}:= D_{x'_{-m}}f^{m+i}(u')$. 
\begin{sublemm}\label{lesublemma}
We have the following bounds on the distance and the angle: 
\[d(x_{-m'},x'_{-m'})\le \theta^{m-m'}\quad \text{and} \quad 
|\angle(u_{-m'},u'_{-m'})|\le (e^{4c^+}\theta)^{(m-m')/2}e^{2C(x)}\;.\]
\end{sublemm}
\begin{proof}By $\theta^j$-contraction of $W^s_{\underline \sb}$ by $f^j$ (see Prop. \ref{geoYg}), it comes  for every $ i\le m$:
\[d(x_{-i},x'_{-i})\le \theta^{m-i}\Rightarrow d(x_{-m'},x'_{-m'})\le \theta^{m-m'} \; .\]
By Lemma 14.10 of \cite{berhen}, we have the following bound:
\[\|u_{-m'}\times  u'_{-m'}\| \le 4(e^{4c^+}\theta)^{(m-m')/2}\|u_{-m}\|\cdot \|u'_{-m}\|\;.\]
Consequently, the angle between $u_{-m'}$ and $u'_{-m'}$ is dominated by:
\[(e^{4c^+}\theta)^{(m-m')/2}\frac{\|u_{-m}\|\cdot \|u'_{-m}\|}{\|u_{-m'}\|\cdot \|u'_{-m'}\|}\; .\]
By the $h$-time property of $S^{t\!t}_{\sb_0\cdots \sb_{p-1}}$ we have $\|u'_{-m}\|\le \|u'_{-m'}\|
 \; .$
By (\ref{eqA8}), we have $\|u_{-m}\|\le e^{2C(x)}\|u_{-m'}\|
 \; .$
%
\end{proof}
Let $\zeta$ be the point of $S^{t\!t \cdot \sb_0\cdots \sb_{p-1}}$ which is sent by $f^{M+1}|Y_\square$ to a tangency point with $W^s_{\sc^{t\!t \cdot \sb_0\cdots \sb_{p-1}}   }$. 
Let $w$ be a unit vector tangent to $S^{t\!t \cdot \sb_0\cdots \sb_{p-1}}$ at $\zeta$. 

We are going to compare the expansions: 
\[
\frac{\|D_{x_{-m}} f^{M+1+n_{\sc_{k}}/M}(u_{-m'})\|}{\|u_{-m'}\|}
\quad ,\quad 
\frac{\|D_{x'_{-m}} f^{M+1+n_{\sc_{k}}/M}(u'_{-m'})\|}{\|u'_{-m'}\|}\quad ,\quad 
\|D_{\zeta} f^{M+1+n_{\sc_{k}}/M}(w)\|\; .
\]

We notice that for every $j\ge 0$:
\[\|D_\zeta f^{M+1+ j}(w)\|\le \theta^j e^{c^+(M+1)}\; .\]
In particular 
\[\|D_\zeta f^{M+1+m'/M}(w)\|\le \theta^{m'/M } e^{c^+(M+1)}\; .\]

On the other hand the distance between $\zeta$ and $x'_{-m'}$ is at most $Cst \cdot
e^{-n_{\sc_k }c/3}$ and likewise for the angle between $w$ and $u_{-m'}$. 
Then a classical computation gives:
\[\|D_\zeta f^{M+1+j}(w) -D_{x'_{-m}} f^{M+1+j}(u'_{-m'}/\|u'_{-m'} \|)\|\le Cst \cdot (M+2+j) e^{2c^+(M+1+j) }e^{-n_{\sc_k }c/3}\; .\]
On the other hand, Sub-Lemma \ref{lesublemma} implies:
\[  \|D_{x_{-m}} f^{M+1+j}(\frac{u_{-m'}}{\|u_{-m'} \|}) -D_{x'_{-m}} f^{M+1+j}(\frac{u'_{-m'}}{\|u'_{-m'} \|})\|\le
 Cst \cdot (M+2+j) e^{2c^+(M+1+j) } e^{2C(x)}(e^{4c+}\theta)^{(m-m')/2}\; .\]

Thus:
\[\|D_{x_{-m}} f^{M+1+n_{\sc_k}/M}(\frac{u_{-m'}}{\|u_{-m'} \|})\|\le 
\theta^{n_{\sc_k}/M} e^{c^+(M+1)} +e^{3c^+(M+1+n_{\sc_k}/M)}(e^{-n_{\sc_k}c/3} + e^{2C(x)} (e^{4c+}\theta)^{(m-m')/2})\]
As $n_{\sc_k}\le \Xi (m-m')$ and since $\theta^{1/\Xi}$ and $\theta^{1/M}$ are much smaller than $e^{-c^+}$, it comes:
\[\|D_{x_{-m}} f^{M+1+n_{\sc_k}/M}(\frac{u_{-m'}}{\|u_{-m'} \|})\|\le e^{-c n_{\sc_k}/4}+ e^{2C(x)} \theta^{n_{\sc_k}/(3\Xi)} \; .\]
Thus  by (\ref{eqA7}), the integer $n_{\sc_k}$ is bounded by a function of $C(x)$. 
%
%
%
%
%
%
%
%
%

\end{proof}

\section{Proofs on the geometry of the regular boxes and their hyperbolic properties}

\label{sectionpreuvegeoYg}
\begin{proof}[ Proof of Proposition \ref{geoYg}]
%

The proof if classical (compare with Prop. 3.6 \cite{berhen}), thus we proceed quickly.

Let $\chi$ the cone field  on $Y_\se$ defined by:
\[ \chi:=\{u=(u.x,u.y): \; |u.y|\le \theta|u.x|\}\;.\]
 
Let $\sg=\sa_1\cdots \sa_i$ be a  sequence of $\sA$-symbols which is regular (resp. weakly regular).

\paragraph{Third item} By proceeding as for Proposition 3.6 of \cite{berhen} (see \textsection 14.2, equations (44) and (45)), we can prove that for every $z\in Y_\sg$, for every $j<k$, every unit vector $u\in \chi$ is sent by $D_zf^{n_{\sa_1\cdots \sa_j}}$ into a small cone field $\chi_{\sa_{j+1}}\subset \chi $, whose vectors are $e^{c n_{\sa_{j+1}}/3}$-expanded by $Df^{ n_{\sa_{j+1}}}$. From this we deduce immediately the first inequality of the third item of the Proposition.
The second inequality uses the same expansion property plus the linear bound $(\star)$ on the order of $n_{\sa_i}$ given by the regularity definition.  
\paragraph{Fifth  item} Actually every unit  vector $u$ in $\chi_{\sa_{j}}$ satisfies the $h$-time property (up to time $n_{\sa_j}$) by Proposition 5.9 of \cite{berhen}. This means that $\|Df^{n_{\sa_j}}(u)\|\ge e^{c k/3} \|Df^{n_{\sa_j}-k}(u)\|$ for every $k\le n_{\sa_j}$. By the mapping property of the cone fields, this implies that for every unit vector $u\in \chi$ and $z\in Y_\sg$:
\[ \|D_zf^{n_\sg}(u)\|\ge e^{c k/3} \|D_zf^{n_\sg-k}(u)\|,\quad \forall k\le n_\sg\; .\]
Then Lemma 2.4 of \cite{YW} implies that the curvature of the curves $f^{n_\sg}(\partial^uY_\sg)$ is $\theta$-small. As these curves have their tangent space in $\chi$, they are flat. 

The length estimate of  $\partial^s Y^\sg$ is given by the second item.

\paragraph{Fourth item} This is a classical statement called binding by Benedicks-Carleson. 
 
The second inequality for the third item implies, by using  Corollary 2.1 of \cite{YW}  that the most contracted direction $e_{n_\sg}$ of $Df^{n_\sg}$ is well defined, of class $C^1$, moreover it is $\theta^k $-contracted by $f^k$ for every  $k\le n_\sg$, and it is $\theta$-$C^1$-close to the most contracted direction of $Df$ which is close to $(1,-P'(x))$. Integrating this vector field, we get curves 
which are $C^2$-close to arcs of parabolas and $\theta^k$-contracted. Such arcs have a small length. Nevertheless it might exit from $Y_\sg$ by $\partial^sY_\sg$ instead of  $\partial^uY_\sg$ (and so might do not intersect a flat stretched curve). If it is the case, we concatenate canonically it with a segment of $\partial^sY_\sg$ so that the new curve has the requested property by the first and second items. 

\paragraph{First and second items}
By the same classical lemmas as for the fourth item, it is a consequence of the following inequality:
\begin{equation}\label{PPCE1}\forall z\in \partial^s Y_\sg,\; \forall u\in \chi;\; \forall k\ge 0,
\quad \| D_zf^k(u)\|\ge e^{-M\Xi k}\|u\|\;.\end{equation}

By the mapping cone property of the parabolic piece, it is sufficient to 
prove that for every parabolic piece $\sp= \square_\pm (\sc_i-\sc_{i+1})$, we have for every $u\in \chi_{\sp}$ and $z\in \partial^s Y_\sp$:

\begin{equation}\label{PPCE2}\|D_zf^{n_{\sp}+k}(u)\|\ge e^{-M\Xi k}\|u\|\quad  \forall k\ge 0.\end{equation}
Then it is clear that   $\partial^s Y_\sp$ is formed by two segments close to a parabola.  Also by the fifth item, an induction gives that $\partial^sY_\sg$ is made by two curves, which are included in $\cup_{k=0}^m f^{-n_{\sa_0\cdots \sa_k}} (\partial^s Y_{\sa_{k+1}})$.  Also the third item and (\ref{PPCE2}) imply (\ref{PPCE1}).

In order to prove (\ref{PPCE2}), we put  $\sc_{i+1}=\sc_i\star \sb$.

In the proof of Proposition 5.9 of \cite{berhen} done in \textsection 14.2, equations (44) and (45) imply that 
for $z\in \partial^s Y_\sp$ and $u\in \chi_\sp$, the vector $Tf^{n_\sp}(u)$ belongs to the `` cone field of the canonical extension of $\sb$''. This cone field satisfies the $h$-times property by Proposition 5.9 of \cite{berhen} and is sent into $\chi$ by $Tf^{n_{\sb}}$. Consequently for every $z\in \partial^s Y_\sp$, $u\in \chi_\sp(z)$, $k\le n_{\sb}$:
\begin{equation}\label{pour pce2}f^{n_\sp+n_{\sb}}(z)\in \partial^s Y_\se,\quad D_z f^{n_\sp+n_{\sb}}(u)\in \chi\quad \text{and}\quad \|D_zf^{n_\sp+n_{\sb}}(u)\|\ge e^{(n_{\sb}-k)c/3} \|D_zf^{n_\sp+k}(u)\|.\end{equation}  
As $A$ is a repelling fixed points, for every vector $u'\in \chi$ and $z'\in \partial^s Y_\se$, it holds 
\[\|D_{z'}f^k(u')\|\ge  \|u'\|,\quad \forall k\ge 0,\]
and so it comes that for every $z\in \partial^s Y_\sp$, $u\in \chi_\sp(z)$:
\begin{equation}\label{pour pce3} \|D_zf^{k }(u)\|\ge e^{(n_\sp+n_{\sb})c/3} \|u\|, \quad \forall k\ge n_\sp+n_{\sb}.\end{equation}  

We recall that by definition of common sequences, $n_{\sb}\le M+  n_\sp/\Xi$. Thus from (\ref{pour pce2}) and (\ref{pour pce3}), for every $k\ge n_\sp$:
\begin{equation}\label{pour pce4}\|D_zf^{k}(u)\|\ge e^{-Mc^+k} \|u\|\end{equation}  
which implies (\ref{PPCE2}).

\end{proof}

\end{appendix}
{\addcontentsline{toc}{part}{Index}\printindex}

\bibliographystyle{alpha}
\bibliography{references}

\end{document}